\newtheorem{lemma}{Lemma}
\newtheorem{theorem}{Theorem}
\newtheorem{prop}{Proposition}
\theoremstyle{remark}
\newtheorem{remark}{\bf Remark}
\renewcommand{\Im}{\operatorname{Im}}
\renewcommand{\Re}{\operatorname{Re}}
\newcommand{\Z}{\mathbb{Z}}
\newcommand{\N}{\mathbb{N}}
\patchcmd{\section}{\scshape}{\bfseries}{}{}
\renewcommand{\@secnumfont}{\bfseries}
\numberwithin{equation}{section}
\numberwithin{lemma}{section}
\numberwithin{theorem}{section}
\numberwithin{prop}{section}
\numberwithin{remark}{section}
\begin{document}

\title{The twisted second moment of modular half integral weight $L$--functions}

\author{Alexander Dunn}
\address{School of Mathematics, Georgia Institute of Technology, Atlanta, USA}
\email{adunn61@gatech.edu}

\author{Alexandru Zaharescu}
\address{Department of Mathematics, University of Illinois, 1409 West Green
Street, Urbana, IL 61801, USA and Simon Stoilow Institute of Mathematics of the Romanian Academy, P.O. Box 1-764, RO-014700 Bucharest, Romania}
\email{zaharesc@illinois.edu}
\subjclass[2010]{Primary 11F66, 11L05, 11L07. Secondary 11F72, 11L15.}
\keywords{Kloosterman and Sali\'{e} sums, half-integer weight automorphic forms, $L$-functions, twisted second moment, shifted convolution, local spacing statistics}

\maketitle

\begin{abstract}
Given a half-integral weight holomorphic Kohnen newform $f$ on $\Gamma_0(4)$, we prove an asymptotic formula for large primes $p$ with power saving error term for 
\begin{equation*}
\sideset{}{^*} \sum_{\chi \hspace{-0.15cm} \pmod{p}}  | L(1/2,f,\chi) |^2.
\end{equation*}
Our result is unconditional, it does not rely on the Ramanujan--Petersson conjecture for the form $f$. This gives a very sharp Lindel\"{o}f on average result for Dirichlet series attached to Hecke eigenforms  without an Euler product. The Lindel\"{o}f hypothesis for such series was originally conjectured by Hoffstein. There are two main inputs. The first is a careful spectral analysis of a highly unbalanced shifted convolution problem involving the Fourier coefficients of half-integral weight forms. The second input is a bound for sums of products of Sali\'{e} sums in the Polya--Vinogradov range. Half--integrality is fully exploited to establish such an estimate. We use the closed form evaluation of the Sali\'{e} sum to relate our problem to the sequence $\alpha n^2 \pmod{1}$.  Our treatment of this sequence is inspired by work of Rudnick--Sarnak and the second author on the local spacings of $\alpha n^2$ modulo one. 

\end{abstract}
\tableofcontents
\section{Introduction and statement of results}
Moments of $L$-functions play a central role in analytic number theory. Classical examples include the fourth moment of Riemann zeta
\begin{equation*}
\int_{0}^T |\zeta(1/2+it) |^4 dt=T P_4(\log T)+O_{\varepsilon} \big(T^{\frac{2}{3}+\varepsilon} \big),
\end{equation*}
for a certain polynomial $P_4$ (see \cite{IM,M1,Zav}), and the cuspidal analogue due to Good \cite{Go}
\begin{equation*}
\int_{0}^T |L(1/2+it,f) |^2 dt=T P_1(\log T)+O_{\varepsilon} \big(T^{\frac{2}{3}+\varepsilon} \big),
\end{equation*}
for a certain polynomial $P_1$ depending on $f$. 

The complexity of a moment computation for a family $\mathcal{F}$ of $L$-functions is measured by the quotient $r=\log \mathcal{C}/\log |\mathcal{F}|$, where $\mathcal{C}$ is the analytic conductor of each function in the family. The edge of current technology where one can hope to obtain an asymptotic with power saving error term is $r=4$. Results in the case $r=4$ can be found in a host of works, including Iwaniec--Sarnak \cite{IS}, Kowalski--Michel--VanderKam \cite{KMV} and Blomer \cite{B1}. 

From an adelic point of view, it is natural to replace the archimedean twist $|\det |^{it}$ with a non-archimedean twist by a Dirichlet character $\chi$. Let $p>2$ be prime, $\psi(p):=p-2$ denote the number of primitive characters modulo $p$ and 
\begin{equation*}
L(s,\chi):=\sum_{n=1}^{\infty} \frac{\chi(n)}{n^s}, \quad \Re s>1,
\end{equation*}
be the usual Dirichlet $L$--function. Young's breakthrough paper \cite{Y2} in 2011 proved for any $\varepsilon>0$ that 
\begin{equation*}
\sideset{}{^*} \sum_{\substack{ \chi \hspace{-0.15cm} \pmod{p} }} |L(1/2,\chi)|^4=\psi(p) P_4(\log p)+O_{\varepsilon} (p^{1-\frac{1}{80}(1-2 \theta)+\varepsilon}),
\end{equation*}
where $\psi(p):=p-2$ is the number of primitive Dirichlet characters modulo $p$,
$P_4$ is a degree four polynomial and $\theta=\frac{7}{64}$ is the best known exponent toward the Ramanujan--Petersson conjecture (due to Kim and Sarnak \cite{Ki}) for Maass forms. The fourth moment of Dirichlet $L$-functions (for a general modulus $q \not \equiv 2 \pmod{4}$) is a special case of the more general moment 
\begin{equation} \label{mixedsecond}
\sideset{}{^*} \sum_{\chi \hspace{-0.15cm} \pmod{q}} L(1/2,f \otimes \chi) \overline{L(1/2,g \otimes \chi)},
\end{equation}
where $f,g$ are two fixed integral weight Hecke eigenforms (either holomorphic, Maass or Eisenstein) and could be either cuspidal or non--cuspidal. Here, $\{\lambda_f(n)\}_{n \geq 1}$ denotes the  
system of Hecke eigenvalues attached to $f$ and 
\begin{equation} \label{Linteg}
L(s,f \otimes \chi):=\sum_{n=1}^{\infty} \frac{\lambda_f(n) \chi(n)}{n^s}, \quad \Re s>1.
\end{equation}
Note that \eqref{Linteg} has an Euler product when the weight of the form is integral. Striking progress has been made on the moment \eqref{mixedsecond} in a sequence of works due to Blomer--Fouvry--Kowalski--Michel--Mili\'{c}evi\'{c}--Sawin \cite{BM1,BFKMM,KMS,BFKMMS}. An asymptotic for \eqref{mixedsecond} (in the case $f=g$) with power saving error term appears in \cite[Theorem~1.17]{BFKMMS}. The same family of twisted $L$-functions had also been previously studied in various contexts. One can see Chinta \cite{Chin}, Duke--Friendlander--Iwaniec \cite{DFI2}, Gao--Khan--Ricotta \cite{GKR}, Stefanicki \cite{Stef} and Hoffstein--Lee \cite{HLe}. 

We would also like to highlight the recent 2022 breakthrough work of Li \cite{XLi} that proves an asymptotic for the twisted second moment over the family of primitive quadratic Dirichlet characters (with fractional logarithmic power saving error term). This improved a result of Soundararajan and Young \cite{SouYou} that was conditional on the Generalised Riemann Hypothesis.

In this work we focus on the half integral weight analogue of \eqref{mixedsecond} when $f=g$. To enable subsequent discussion and introduce our results, we require some notation. More details are provided below in Section \ref{holoaut}. For $j \in \mathbb{N}$, let $k:=\frac{1}{2}+2j$ be an odd half integer. Suppose $f: \mathbb{H} \rightarrow \mathbb{C}$ is holomorphic, vanishes at all three cusps of $\Gamma_0(4)$, and satisfies
\begin{equation*}
f(\gamma \tau)=\nu_{\theta}(\gamma) (c \tau+d)^k f(\tau) \quad \text{for all} \quad \gamma \in \Gamma_0(4),
\end{equation*}
where $\nu_{\theta}$ is the standard theta multiplier on $\Gamma_0(4)$. Let $\mathcal{S}_k(4)$ denote this space of cusp forms.

Let the Fourier expansion of $f$ at $\infty$ be given by
\begin{equation} \label{fourier}
f(\tau):=\sum_{n=1}^{\infty} b(n) e(n \tau)=\sum_{n=1}^{\infty} a(n) n^{\frac{k-1}{2}} e(n \tau).
\end{equation}
For a prime $p>2$ and primitive character $\chi$ modulo $p$, define the twisted form 
\begin{equation*}
f_{\chi}(\tau):=\sum_{n=1}^{\infty} \chi(n) a(n) n^{\frac{k-1}{2}} e(n \tau),
\end{equation*} 
of level $4p^2$. The twisted $L$--function is given by the Dirichlet series
\begin{equation} \label{dirich}
L(s,f,\chi):=\sum_{n=1}^{\infty} \frac{a(n) \chi(n)}{n^s}, \quad \Re s>1.
\end{equation}
We have used a slightly different notation here to distinguish with the integral weight case discussed previously. Taking the Mellin transform of \eqref{dirich} one obtains a completed $L$--function of degree $2$ that has both meromorphic continuation to all of $\mathbb{C}$ (in fact holomorphic, because $f$ is cuspidal) and a functional equation, but is without an Euler product. The coefficients $a(n)$ are no longer multiplicative, except at squares. 

For odd primes $q$, the Hecke operators $\mathcal{T}_{q^2}$ defined on $\mathcal{S}_k(4)$ (with $k=\frac{1}{2}+2j$) are given by 
\begin{equation*}
\mathcal{T}_{q^2} f(\tau):=\sum_{n \geq 1} \Big( b(q^2n)+\Big(\frac{n}{q} \Big) q^{k-\frac{3}{2}} b(n)+q^{2k-2} b \Big(\frac{n}{q^2} \Big) \Big) e(n \tau).
\end{equation*}
Here we have used the convention that $b(x)=0$ unless $x \in \mathbb{Z}$. We call a half-integral weight cusp form a Hecke cusp form if $\mathcal{T}_{q^2} f=\lambda(q) f$ for all $q>2$. One of the main tools for understanding half integral weight forms and their coefficients is the Shimura lift \cite{Shim1}. Following Kohnen--Zagier \cite{Koh}, we focus on \emph{Kohnen's plus subspace}. The behaviour of these forms under the Shimura lift is well understood. The Kohnen plus space $S^{+}_k(4)$ (when $k=\frac{1}{2}+2j$) is the subspace of $\mathcal{S}_k(4)$ consisting of forms whose Fourier coefficients satisfy 
\begin{equation} \label{kohnen}
b(n)=0 \quad \text{unless} \quad n \equiv 0,1 \pmod{4}.
\end{equation} 
This space has a basis consisting of simultaneous eigenfunctions of the $\mathcal{T}_{q^2}$ for odd $q$. As $k \rightarrow \infty$, asymptotically one third of half integral weight cusp forms lie in Kohnen's plus space by dimension considerations. Given a Hecke cusp form $f \in S^{+}_k(4)$, one can normalise it so that its coefficients are totally real algebraic numbers \cite{Stev}. Let $d$ be a fundamental discriminant and 
\begin{equation} \label{quadchard}
\psi_d(\bullet):=\Big( \frac{d}{\bullet} \Big).
\end{equation}

There is no Euler product representation for \eqref{dirich}, so one does not expect a Riemann hypothesis to hold. There are examples of Dirichlet series that do not have an Euler product that fail to be subconvex at the center point. Such an example is given in \cite{CG}. Let
  \begin{equation*}
  D(s):=\sum_{n=1}^{\infty} \frac{\tau(n) \cos \big( \frac{2 \pi n}{q} \big)  }{n^s},
  \end{equation*}
where $\tau(n)$ is the divisor function and $q$ is a prime. This series has conductor $q^2$ and $D(1/2)$ gets as large as $\sqrt{q} \log q$ (convexity) as $q \rightarrow \infty$ through primes. This counterexample would suggest that the Euler product is crucial for subconvexity. However, in the case of automorphic $L$--functions attached to forms of integral weight, the Euler product is induced by the property that the attached form is a simultaneous eigenfunction for the Hecke operators. Jeffrey Hoffstein informally conjectured at Oberwolfach in 2011 that such a property was crucial in implying a Lindel\"{o}f hypothesis. Kiral provided the first progress towards a possible Lindel\"{o}f hypothesis. In particular, Kiral proved \cite{K} that for primitive $\chi$ mod $p$ we have 
\begin{equation}  \label{kiral}
L \Big(\frac{1}{2},f, \chi \Big) \ll_{f,\varepsilon} p^{\frac{3}{8}+\frac{\theta}{4}+\varepsilon},
\end{equation}
where $\theta=7/64$ is the Kim-Sarnak bound. Interestingly, any subconvex exponent $3/8+\theta/4<1$ would be sufficient
to obtain a power saving in our Theorem \ref{secondmoment} below (cf. Remark \ref{subconvexexponent} and the argument above it).
Kiral's result also holds for more general moduli. For reference, the conductor here is $\asymp_{k} p^2$, so the exponent $3/8$ suggests a bound of Burgess quality. In this work we compute the ``barrier" moment for this class of $L$--functions with power saving error term. That is, the moment that gives Lindel\"{o}f on average, but still yields the convexity bound for each individual $L$--value. Computing higher moments in this family certainly warrants further investigation to such an end.  
A mollified and/or amplified variant of the asymptotic second moment result in Theorem \ref{secondmoment} (in addition to the first moment) would lead to a 
positive proportion of non-vanishing at center point and subconvexity results. We leave this to the interested reader.
One can also see \cite{HoKo} for applications of subconvex bounds in the level aspect for double Dirichlet series.
Blomer \cite{Blo11} proved subconvex bounds of such series in the $t$-aspect on the critical line.

This family of $L$--functions attached to half integral weight Kohnen newforms has also been studied in other contexts. In 2020, Lester and Radziwi{\l\l} under the Generalised Riemann Hypothesis proved that half-integral weight holomorphic Hecke forms in Kohnen's plus space satisfy Quantum Unique Ergodicity (QUE) \cite{LR}.

We use the spectral theory of automorphic forms and a delicate analysis of the distribution of $\alpha n^2$ modulo one to prove the following moment result.
\begin{theorem} \label{secondmoment}
 Let $\varepsilon>0$, $j \in \mathbb{N}$, and  $f$ be a holomorphic cuspidal newform of weight $k:=\frac{1}{2}+2 j$ on $\Gamma_0(4)$ such that
\begin{itemize}
\item $f$ lies in Kohnen's plus space,
\item $f$ is a simultaneous Hecke eigenform for all $\mathcal{T}_{q^2}$ with $q>2$ prime, 
\item $f$ is normalised so that its Fourier coefficients are totally real algebraic numbers.
\end{itemize}
As $p \rightarrow \infty$ through  primes $p \equiv 1 \pmod{4}$, we have
\begin{equation} \label{bigtheorem}
\sideset{}{^*} \sum_{\chi \hspace{-0.15cm} \pmod p} | L  (1/2,f,\chi ) |^2=c_1(f) \psi(p) \log(p)+c_2(f) \psi(p)+O_{f,\varepsilon}(p^{1-\frac{1}{600}+\varepsilon}),
\end{equation}
where $\psi(p):=p-2$ is the number of primitive Dirichlet characters modulo $p$, and $c_1(f), c_2(f) \in \mathbb{R}$ are constants depending only on $f$.
Furthermore, we have that $c_1(f)>0$. 
\end{theorem}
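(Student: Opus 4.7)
The plan is to evaluate the second moment by an approximate functional equation, open the square and apply orthogonality of primitive characters mod $p$, extract the main term from the diagonal, and bound the off-diagonal by two distinct methods depending on how unbalanced the summation ranges are.

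\textbf{Step 1 (Approximate functional equation and the diagonal).} The completed $L$--function attached to $L(s,f,\chi)$ has degree $2$ and analytic conductor of size $p^2$, so an approximate functional equation expresses
$$L(1/2,f,\chi) = \sum_{n \geq 1} \frac{a(n)\chi(n)}{\sqrt n}\, V\!\left(\frac{n}{p}\right) + \epsilon(f,\chi) \sum_{n \geq 1} \frac{a(n)\overline{\chi}(n)}{\sqrt n}\, V\!\left(\frac{n}{p}\right),$$
with $V$ a smooth rapidly decaying cutoff essentially localising $n \ll p^{1+\varepsilon}$ and $\epsilon(f,\chi)$ the root number. Squaring and summing over primitive $\chi$ modulo $p$, orthogonality collapses $\sideset{}{^*}\sum_\chi \chi(m)\overline{\chi}(n)$ to $\psi(p)$ times the indicator of $m \equiv n \pmod p$ with $(mn,p)=1$, up to negligible corrections. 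The diagonal $m=n$ produces
$$\psi(p) \sum_n \frac{|a(n)|^2}{n} V\!\left(\frac{n}{p}\right)^2,$$
and a contour shift of the Rankin--Selberg Dirichlet series $\sum_n |a(n)|^2 n^{-s}$, which has a simple pole at $s=1$ (via the Shimura lift of $f$ together with standard Rankin--Selberg theory for the integral-weight lift), extracts exactly $c_1(f)\psi(p)\log p + c_2(f)\psi(p)$.

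\textbf{Step 2 (Shifted convolution in the balanced range).} Write $m = n + hp$ with $h \in \Z \setminus \{0\}$ and localise dyadically at scales $m \asymp M$, $n \asymp N$, $MN \ll p^{2+\varepsilon}$. In the balanced regime $M \asymp N \asymp p$ the residual sum
$$\sum_{h \neq 0} \sum_n a(n+hp)\overline{a(n)}\, W_{M,N}(n+hp,n)$$
is attacked spectrally. One applies the Kuznetsov trace formula for half-integral weight forms on $\Gamma_0(4)$, whose geometric side features Sali\'e sums in place of ordinary Kloosterman sums, thereby writing the shifted convolution as a spectral average over weight $1/2$ Maass cusp forms and the continuous spectrum. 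Through the Kohnen--Shimura correspondence the spectral coefficients are expressible in terms of central $L$--values of quadratic twists of integral-weight lifts, so the best available subconvexity combined with the Kim--Sarnak bound $\theta \leq 7/64$ yields a power saving. This part of the argument parallels the integral-weight strategy of Blomer--Mili\'cevi\'c.

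\textbf{Step 3 (Unbalanced range via short Sali\'e sums; the main obstacle).} When $M/N$ or $N/M$ is large the spectral method degrades because the spectral average becomes too long relative to the analytic gain. Here one applies Voronoi summation on $\Gamma_0(4)$ to the longer variable, producing a dual sum whose coefficients involve products of Sali\'e sums $S(m,n;c)$. Using the closed-form evaluation of the shape
$$S(m,n;c) \;=\; \varepsilon_c \sqrt{c} \sum_{\ell^2 \equiv mn \pmod c} e\!\left(\frac{2\ell}{c}\right),$$
the problem reduces to bounding a short linear exponential sum twisted by the quadratic sequence $\ell^2 \pmod c$; the critical regime is when the summation length drops below $\sqrt c$, where naive completion gives only the trivial bound. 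Following the Rudnick--Sarnak and Zaharescu treatment of the fine-scale distribution of $\alpha n^2 \pmod 1$, one squares out (Weyl differencing / second moment) and counts near-coincidences of $\alpha(n_1^2 - n_2^2) \pmod 1$; the resulting pair-correlation type estimate supplies the needed saving. This short-sum bound, which genuinely exploits half-integrality through the square structure of the Sali\'e evaluation, is the main technical obstacle. Finally, balancing the dyadic thresholds between Steps 2 and 3 and optimising over the parameters produces the exponent $1/600$ in the error term of \eqref{bigtheorem}.
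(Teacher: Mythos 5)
Your high-level skeleton (approximate functional equation, orthogonality, diagonal for the main terms, shifted convolution for the off-diagonal, special treatment of the unbalanced range via Sali\'e sums) matches the paper's architecture, but several of the key mechanisms you describe are either wrong or different from what the paper uses, and at least two of the gaps are fatal to the argument as you have stated it.

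\textbf{The approximate functional equation is missing the $\widetilde{\psi}_p$-twist.} Because $f$ transforms under the theta multiplier, the functional equation for $L(s,f,\chi)$ (see \eqref{functionaleq}--\eqref{funceq}) relates $L(s,f,\chi)$ to $L(1-s,f,\chi,\chi\widetilde{\psi}_p)$, where $\widetilde{\psi}_p = (\tfrac{\cdot}{p})$. After squaring and applying orthogonality one is left not only with $\mathcal{D}_1$ of the type you describe but also with $\mathcal{D}_2$ and $\widetilde{\mathcal{S}}_{N,M,p,1}$, where the Fourier coefficients are additionally twisted by $\widetilde{\psi}_p$. These are not diagonal artifacts; they require a genuinely new input, namely Kiral's hybrid subconvex-type bound for $\sum_m a(m)\chi(m) V(m/X)$ (Lemma~\ref{kiraltwist}). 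Your sketch has no mechanism to handle these terms.

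\textbf{Step 2 is not how the paper proceeds, and your proposed route would fail.} You suggest applying the half-integral weight Kuznetsov formula directly, so that the geometric side has Sali\'e sums and the spectral side consists of weight $1/2$ Maass forms whose coefficients you would then control via the Shimura lift and subconvexity. The paper instead detects the shift with Jutila's circle method and applies Voronoi summation in \emph{both} the $m$- and $n$-variables. The two theta multipliers then collide (cf.~\eqref{quadchar}) and their product is exactly $-i\left(\frac{4\ell_1\ell_2}{d}\right)$, a quadratic character. This returns the problem to a \emph{weight zero} Kloosterman sum $S(b,h,c,\chi)$ twisted by a quadratic nebentypus, so the weight zero Kuznetsov formula (Lemma~\ref{kuz}) applies. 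This maneuver is what allows the authors to invoke the flexible Blomer--Mili\'cevi\'c large sieve for weight zero Maass coefficients supported on multiplicatively structured sequences (Theorem~\ref{BM1theorem}) and thereby avoid the Ramanujan--Petersson conjecture entirely. Your approach via weight $1/2$ spectral expansion has no comparable tool: the Ramanujan bound is far from known in weight $1/2$, the relevant large sieve inequalities do not exist in the literature, and converting the spectral coefficients to central $L$-values of quadratic twists on average is itself an open-ended problem. You also misplace the hard regime: the balanced range $M\asymp N\asymp p$ is actually easy (Lemma~\ref{secondregime}); the difficulty concentrates around $M\approx p^{1/2}$, $N\approx p^{3/2}$, which is exactly where the spectral estimate in Proposition~\ref{spectralest} and the short Voronoi/Sali\'e estimate meet.

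\textbf{Step 3 omits the two ideas that make the critical range close.} First, you nowhere address how to control the Fourier coefficients $a(n)$ without the (unknown) Ramanujan--Petersson bound for $f$. The paper splits the $n$-sum over the level sets $\mathcal{N}_r(f) = \{n : 2^{r-1} < |a(n)| \leq 2^r\}$ and plays the trivial bound for large $r$ (using the mean square bound \eqref{L2}) against the $\alpha n^2$-analysis for small $r$, then optimises over $r$. Without this you would be forced to invoke $a(n)\ll n^{\varepsilon}$, which is conditional. Second, the phrase ``one squares out and counts near-coincidences; the resulting pair-correlation type estimate supplies the needed saving'' conceals the actual content of Theorem~\ref{critrangethm}: the argument partitions $\ell\pmod p$ into three classes $\mathcal{H}_1,\mathcal{H}_2,\mathcal{H}_3$ according to whether $h\alpha_\ell$ has a continued-fraction convergent with denominator in a prescribed window, applies Erd\H{o}s--Tur\'an plus Weyl on $\mathcal{H}_1$, bounds $|\mathcal{H}_2|$ by a direct diophantine counting, and on $\mathcal{H}_3$ constructs an algebraic set $\mathfrak{C}_\ell$ via a pigeonhole argument on repeated consecutive gaps and then counts it through a short arithmetic progression argument. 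None of this follows from a bare ``pair-correlation type estimate,'' and no off-the-shelf equidistribution result for $\alpha n^2$ gives the necessary bound at the required uniformity in $\ell$.
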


\begin{remark}
The constants $c_1(f)$ and $c_2(f)$ are given in \eqref{c1comp} and \eqref{c2comp} respectively.
\end{remark}

\begin{remark}
Other cases of Theorem \ref{secondmoment} i.e. when $k=\frac{3}{2}+2j$ and/or $p \equiv 3 \pmod{4}$ can be established by a mild adaption of the methods in this paper.
It is technically convenient to restrict attention to the case $k=\frac{1}{2}+2j$ and $p \equiv 1 \pmod{4}$.
\end{remark}

\begin{remark}
We emphasize that the purpose of this paper was to break the moral ``convexity barrier" by establishing a power saving error term in Theorem \ref{secondmoment}.
Optimality of the power saving is not pursued in this paper.
\end{remark}

\begin{remark}
There are other interesting potential variants of Theorem \ref{secondmoment}. The methods of the paper should be easily adapted 
to prove a moment with summand of the form $L (1/2,f,\chi ) \overline{L(1/2,g,\chi)}$ with $f,g$ both
Kohnen newforms and orthogonal to one another.
The main term should have magnitude $\psi(p)$ (with no $\log p$) in this case. Another variant is a moment with summand 
$|L(1/2,f,\chi)|^2$ where $f$ is a non-cuspidal metaplectic Eisenstein series. This appears to be more involved because the Fourier coefficients
of half-integer weight Eisenstein series are essentially quadratic Dirichlet $L$-functions.  Unlike the case of Young \cite{Y2}, the convolution structure of the divisor 
function can't be used in this case. Another interesting variant is a moment with summand $L (1/2,f,\chi ) \overline{L(1/2,g,\chi)}$ where
$f$ is a Kohnen newform and $g$ is its Shimura correspondent.
\end{remark}

Theorem \ref{secondmoment} depends on the following bound for a short sum of products of Sali\'{e} sums. The result and its proof are of independent interest, because its origins
are a bilinear form in Sali\'{e} sums.
\begin{theorem} \label{critrangethm}
Let $p$ be a prime with $p \equiv 1 \pmod 4$. Suppose $\varepsilon>0$, $p^{\frac{1}{2}-\frac{1}{10}} \leq N \leq p^{\frac{1}{2}+\frac{1}{10}}$, $1 \leq M \leq p/2$ and $c \in \mathbb{F}^{\times}_p$. Then we have
\begin{align} \label{critrangestat}
& \sum_{N \leq n_1, n_2 \leq 2N} \Big | \sum_{M \leq m \leq 2M} S(m, c n_1,p) \overline{S(m, c n_2,p)} \Big |  \nonumber \\
& \ll_{\varepsilon} p^{\varepsilon} ( MN^2 p^{1-\frac{1}{27}}+MN p^{\frac{3}{2}-\frac{1}{27}} 
+N^2 p^{\frac{3}{2}-\frac{1}{27}}+Np^{2-\frac{1}{27}}+N^{\frac{1}{2}} p^{2+\frac{23}{108}}+p^{\frac{5}{2}-\frac{1}{27} }),
\end{align}
where $S(m,n,p)$ denotes the usual unnormalised Sali\'{e} sum (cf. \eqref{salieval}),
and the implied constant depends only on $\varepsilon$.
\end{theorem}
Theorem \ref{critrangethm} gives a non-trivial power saving over the trivial bound in the Polya--Vinogradov range $M,N \sim p^{1/2+o(1)}$ (cf. \eqref{vordis2}).
In a subsequent (to the writing of this paper) joint work, both authors joint with Kerr and Shparlinski \cite{DKSZ} improved Theorem \ref{critrangethm} using 
an alternative argument that exploited the geometry of numbers and additive combinatorics. One can also see \cite{KSSZ} for further improvements,
as well as generalisation to higher order Sali\'{e} sums.
An arithmetic application of bilinear forms in Sali\'{e} sums  to the equidistribution modulo $1$ of roots to the quadratic congurence $x^{2} \equiv p \pmod{q}$ with $q$ a large prime and $p$ varying over primes $p \leq q$ is also given in \cite{DKSZ}.  Average versions (over $q$) of these applications are given in \cite{SSZ2}.
An average version (over the modulus $q$) of Theorem \ref{secondmoment}
with power saving error term was proved by the second author joint with 
Shkredov and Shparlinski \cite{SSZ}.  

\section{High level sketch} \label{highlevel}
We work with normalised forms whose Fourier coefficients are totally real algebraic numbers to emulate the integral weight setting as much as possible. The natural starting point is an approximate functional equation for the product of $L$--functions
\begin{equation*}
L(s,f,\chi) \overline{L(s,f,\chi) }=L(s,f,\chi) L(\overline{s},f,\overline{\chi}).
\end{equation*}
After summing the approximate functional equation over all primitive $\chi$ modulo $p$ using orthogonality and extracting the main terms, one obtains expressions roughly of the form 
\begin{equation} \label{keyexp1}
\text{(A)} \quad \frac{1}{p} \sum_{\substack{mn \leq p^2 \\ m \neq n } } a(m) \Big( \frac{m}{p} \Big) a(n)  \Big(\frac{n}{p} \Big) \quad \text{and} \quad \text{(B)} \quad \sum_{\substack{mn \leq p^2 \\ m \equiv n \hspace{-0.15cm} \pmod{p} \\ m \neq n } } a(m) a(n),
\end{equation}
where $a(m)$ denotes the Fourier coefficients of the holomorphic half-integer weight cusp form $f$.  The twisted terms in (A) appear because the theta multiplier causes the second term in the approximate functional equation for $L(s,f,\chi) L(\overline{s},f, \overline{\chi})$ to contain Gauss sums attached to character $\chi \big(\frac{\bullet}{p} \big)$.

If one knew the Ramanujan--Petersson conjecture for the Fourier coefficients $a(n)$ of $f$ (cf. \eqref{ram}), then applying this bound pointwise to \eqref{keyexp1} would yield the ``trivial" bound of $O(p^{1+\varepsilon})$. We will beat this bound by a power savings in $p$, without recourse to the Ramanujan--Petersson conjecture. The Ramanujan--Petersson conjecture for the Fourier coefficients of $f$  is tantamount to the Lindel\"{o}f hypothesis
for the $L$-function attached to quadratic twists of the Shimura correspondent of $f$ by the Kohnen-Zagier formula \cite{KZ} (cf.\eqref{KZ}).

We reserve the discussion here for (B). The terms in (A) can then be effectively handled using results implicit in the work of Kiral \cite{K}. There are two well known ways to interpret this double summation. One point of view is to cast it as a shifted convolution problem involving Fourier coefficients of the half-integral weight form. This is useful when the size of the variables is not too far apart. Another option is to consider it a sum over  the Fourier coefficients of a half-integral weight cusp form in arithmetic progressions. This has utility when one variable is significantly larger than the other. 

To be precise, we restrict the variables to $n \asymp N$ and $m \asymp M$ where $N \geq M$ by symmetry and $NM=p^2$. On one hand we can apply Voronoi summation in the inner sum of 
\begin{equation*}
\sum_{m \asymp M} a(m) \sum_{\substack{n \asymp N \\ n \equiv m \pmod{p}}} a(n),
\end{equation*}
obtaining an expression roughly of the form 
\begin{equation} \label{vordis}
\frac{N}{p^2} \sum_{m \asymp M} a(m) \sum_{n \asymp p^2/N} a(n) S(m,n,p),
\end{equation}
where $S(m,n,p)$ denotes the usual unnormalised Sali\'{e} sum (cf. \eqref{salieval}). Using Rankin--Selberg bounds and the evaluation of the Sali\'e sum, we obtain a bound of $M p^{\frac{1}{2}}$, which is admissible if  
$M \leq p^{\frac{1}{2}-\delta}$ (or equivalently $N \geq p^{\frac{3}{2}+\delta}$) for some fixed $\delta>0$.

On the other hand, we can interpret the problem as an averaged shifted convolution sum 
\begin{equation*}
\sum_{r \asymp N/p} \sum_{\substack{n \asymp N, m \asymp M  \\  n-m=rp}} a(m) a(n).
\end{equation*}
We detect the equality using additive characters and apply Jutila circle method to set up the problem. One of the key steps is to perform Voronoi summation in both the $m$ and $n$ summations. The collision of the two theta multipliers (evaluated at opposite sign) in this process has the net effect of twisting by the quadratic character $\chi:=\big(\frac{4 \ell_1 \ell_2}{\bullet} \big)$, and essentially returns us to a weight zero setting.  This feature can also be seen another way. The function
$V_{\ell_1,\ell_2}(\tau):=f(\ell_1 \tau) \overline{f(\ell_2 \tau)} y^k$ has nebentypus $\chi$ on $\Gamma_0(4 \ell_1 \ell_2)$. 
The standard approach to the shifted convolution problem $\ell_1 n-\ell_2 m =h$ 
 would be to obtain the spectral decomposition of
 $\langle V_{\ell_1,\ell_2}, \mathcal{P}_{h}(\cdot,s)\rangle$, where $\mathcal{P}_{h}(\tau,s)$ is 
an appropriate Poincar\'{e }series. One can see \cite{K}.

We get an expression roughly of the form 
\begin{equation*}
\frac{M^2 (\ell_1 \ell_2)}{C^3} \hspace{0.1cm} \sum_{\substack{b \\ |b| \asymp \mathcal{K}}} \hspace{0.15cm} \sum_{\substack{\ell_1 n-\ell_2 m=b \\ \ell_1 n \asymp C^2 N/M^2 \\ \ell_2 m \asymp C^2/M}} a(m) a(n) \sum_{\ell_1 \ell_2 \mid c} \frac{K(b,h,c,\chi)}{c} \Phi \Big(4 \pi \frac{\sqrt{|b| h}}{c} \Big),
\end{equation*}
where $C:=N^{1000}$, $\Phi$ is some smooth weight function and the $K(b,h,c,\chi)$ are the usual weight zero Kloosterman sums twisted by a quadratic character.  Note that the size of $C$ has no bearing on the eventual bounds that are obtained. The Kuznetsov formula can then be applied to the summation over $c$ to decompose it into the contributions from the holomorphic, Maass and Eisenstein spectrums. Here we are able to use the analysis of Blomer and Mili\'{c}evi\'{c} \cite{BM1}. A crucial input in this analysis is a flexible version of the large sieve for Maass forms due to Blomer and Mili\'{c}evi\'{c} \cite[Theorem~13]{BM1} that allows for extra divisibility conditions. This idea leads to a bound roughly of the shape of $Np^{-\frac{1}{2}}$, rather than one of the quality $Np^{\theta-\frac{1}{2}}$, where $\theta$ is the best known exponent toward the Ramanujan--Petersson conjecture for weight zero Maass forms. A precise version of this bound is stated in Proposition \ref{spectralest}. We also develop the analogous flexible large sieve bounds for coefficients of Eisenstein series attached to even Dirichlet characters, generalising the one for trivial nebentypus due to Blomer--Harcos--Michel in \cite{BHM}.  The computational technology for Eisenstein series developed by Kiral--Young \cite{KY1} and Young \cite{Y1} is useful for this. 

Analogous to \cite{BM1}, it remains to close the small gap where $M=p^{\frac{1}{2}+o(1)}$ and $N=p^{\frac{3}{2}+o(1)}$, referred to as the \emph{critical range}. We define the sets,
\begin{equation*}
\mathcal{N}_{0}(f):=\{n \in \mathbb{N}:  0 \leq |a(n)| \leq 1 \},
\end{equation*}
and for all $r \geq 1$,
\begin{equation*}
\mathcal{N}_{r}(f):=\{n \in \mathbb{N}:  2^{r-1}<|a(n)| \leq 2^r \}.
\end{equation*}
We break \eqref{vordis} into $O(\log^2 p)$ subsums 
\begin{equation} \label{vordis2}
\frac{N}{p^2} \sum_{m \asymp M} a(m) \sum_{\substack{n \asymp p^2/N \\ n \in \mathcal{N}_r(f) }} a(n) S(m,n,p).
\end{equation}
Observe that $M=p^{\frac{1}{2}+o(1)}$ and $p^2/N=p^{\frac{1}{2}+o(1)}$, and so we are left estimating a bilinear form involving Sali\'{e} sums in the Polya--Vinogradov range. 
Power saving bounds for bilinear forms in Kloosterman sums and generalized Kloosterman sums in the Polya--Vinogradov range have been 
stunningly proved by Kowalski--Michel--Sawin \cite{KMS,KMS2} using deep algebro-geometric techniques. 
We emphasize that the techniques of \cite{KMS,KMS2} do not apply to the case of Sali\'{e} sums (the monodromy group of the Sali\'{e} sums is too small to make the arguments work). The elementary nature of Sali\'{e} sums require a completely different approach of diophantine nature.
Applying Cauchy--Schwarz to the $m$ sum in \eqref{vordis2} and using Rankin--Selberg bounds we arrive at
\begin{equation} \label{CRdis}
\frac{NM^{\frac{1}{2}}}{p^2} \Big(\sum_{\substack{n_1,n_2 \asymp p^2/N \\ n_1, n_2 \in \mathcal{N}_r(f) }} |a(n_1) a(n_2) | \Big | \sum_{m \asymp M} S(m,n_1,p) \overline{S(m,n_2,p)} \Big | \Big)^{\frac{1}{2}}.
\end{equation}
At this stage it is tempting to invoke the Ramanujan--Petersson conjecture to handle the Fourier coefficients using a sup norm in \eqref{CRdis}. Instead, we estimate \eqref{CRdis} in two different ways depending on the size of $r$. For $r$ large, we estimate trivially. The main feature here is that the Rankin--Selberg bound \eqref{L2} implies that 
\begin{equation*}
| \mathcal{N}_r(f) \cap [0,X]  | \ll_{\varepsilon} \frac{X^{1+\varepsilon}}{2^{2r}}.
\end{equation*}
This guarantees that \eqref{vordis2} is
\begin{equation*}
\ll \frac{M p^{\frac{1}{2}}}{2^r},
\end{equation*}
which saves over the trivial bound as soon as $r$ is large enough. 

For $r$ small, we take the sup norm in \eqref{CRdis}, extend the summation on $n$ by positivity, and use the closed form evaluation of Sali\'{e} sums in terms of Weyl sums to obtain 
\begin{equation} \label{maindis}
\frac{2^r NM^{\frac{1}{2}}}{p^{3/2}} \Big(\sum_{n_1,n_2 \asymp p^2/N}  \Big | \sum_{M \leq m \leq 2 M} \sum_{\substack{u,v \hspace{-0.2cm} \pmod{p} \\ u^2 \equiv mn_1 \hspace{-0.1cm} \pmod{p} \\ v^2 \equiv mn_2 \hspace{-0.1cm} \pmod{p}}} e \Big( \frac{2(u+v)}{p}  \Big)  \Big | \Big)^{\frac{1}{2}}.
\end{equation}
Recall that $M=p^{\frac{1}{2}+o(1)}$ and $p^2/N=p^{\frac{1}{2}+o(1)}$ here. The strategy now is to obtain cancellation in the short $m$ summation by utilising the short average over $n_1$ and $n_2$.  For simplicity, we consider a restricted version of the sum in \eqref{maindis} whose variables $m,n_1$ and $n_2$ satisfy
\begin{equation*}
\Big(\frac{n_1}{p} \Big)=\Big(\frac{n_2}{p} \Big)=\Big(\frac{m}{p} \Big)=1.
\end{equation*}
The other case is analogous. For $\ell \in \mathbb{F}^{\times}_p$, define
\begin{equation*}
A_{\ell}:=\sum_{M \leq m \leq 2 M} \hspace{0.15cm} \sum_{t^2 \equiv m \hspace{-0.2cm} \pmod{p}} e \Big( \frac{2 t \ell}{p} \Big), 
\end{equation*}
and 
\begin{align*} 
\mathbb{S}_{\ell}:= \Big \{(u,v)  \in (\mathbb{F}_p^{\times})^2 &:  (u^2, v^2) \pmod{p} \in   [p^2/N,p^2/N] \times [p^2/N,p^2/N] \\
 & \text{ and } \quad u + v \equiv \ell \hspace{-0.2cm} \pmod{p} \Big \}.
\end{align*}
The triangle inequality asserts that the restricted version of the bracketed sum in \eqref{maindis} is
\begin{equation} \label{ineqdis}
\leq \sum_{\ell \hspace{-0.2cm} \pmod{p}}  | A_{\ell} | |\mathbb{S}_{\ell}|.
\end{equation}
We focus on the non-trivial case when $\ell \in \mathbb{F}^{\times}_p$.
The exponential sums $A_{\ell}$ are too short to complete, so we focus on $\mathbb{S}_{\ell}$, whose elements still capture the averaging over $n_1$ and $n_2$. Recall that $(u,v) \in \mathbb{S}_{\ell}$ are solutions to the linear equation
\begin{equation} \label{linear} 
u+v \equiv \ell  \pmod{p},
\end{equation}
whose squares lie in a short interval. Algebraically manipulating \eqref{linear} we see that $(u,v) \in \mathbb{S}_{\ell}$ must satisfy the polynomial congruence
\begin{equation} \label{congdis}
\overline{\ell}^2 (u^2-v^2)^2+\ell^2 \equiv 2(u^2+v^2) \pmod{p}.
\end{equation} 
We set
\begin{equation*}
\alpha_{\ell}:=\frac{\overline{\ell}^2}{p} \in \mathbb{Q} / \mathbb{Z} \quad \text{and} \quad \beta_{\ell}:=\frac{\ell^2}{p} \in \mathbb{Q} / \mathbb{Z}.
\end{equation*}
Thus \eqref{congdis} implies 
\begin{equation} \label{diophantinedis}
\| \alpha_{\ell} (u^2-v^2)^2 +\beta_{\ell} \| \leq \frac{8p}{N},
\end{equation}
where $\| \bullet \|$ denotes the distance to the closest integer. Therefore pairs $(u,v) \in \mathbb{S}_{\ell}$ produce elements of the sequence $\{\alpha_{\ell} n^2\}_{0 \leq n \leq N}$ modulo $1$ and lie in a cluster around $-\beta_{\ell}$.  For a given $n$ and $\ell$, there is at most one pair $(u,v) \in \mathbb{S}_{\ell}$ that corresponds to them. 

The local spacing distribution of the sequence $\alpha n^2$ for $\alpha$ irrational has been extensively studied in the literature. A classical result of Rudnick and Sarnak states that \cite{RS} for all integers $d \geq 2$ and almost all real $\alpha$, the pair correlation of sequence $\alpha n^d$ mod 1 is Poissonian. This is in contrast with the case $d=1$,
where it is well known that for all $\alpha $ and all $N$, the gaps between  consecutive elements of $\alpha n$ mod 1, $1 \leq n \le N$ can take at most three values. 

Returning to the case $d= 2$, Rudnick, Sarnak, and the second author \cite{RSZ}, \cite{ZA2} show that for sufficiently well approximable numbers $\alpha$, the $m$-level correlations and consecutive spacing are Poissonian along subsequences. For $\alpha=\sqrt{2}$, these types of conjectures are supported numerically \cite{GGI} because of their close connection to the distribution between neighbouring levels of a generic integrable quantum system. It is also shown in \cite{RSZ} that when $b/p \in \mathbb{Q}$, the sequence $bn^2/p$ also has a local Poissonian distribution when the number of points sampled are in certain ranges (in terms of $p$). In shorter ranges, they are able to show such a phenomenon dramatically fails for some $b$. Moreover, some of the clusters of these points in these sequences are so dense that they are capable of making the 5-level (and all higher level) correlations diverge. One key aspect is that, although our $\alpha$'s are rational, our intuition comes from the case $\alpha$ irrational in \cite{RSZ}. Thus, we will not work
with the numbers $\alpha_{\ell}$ themselves to analyse the cluster of points in \eqref{diophantinedis}, but instead consider various convergents to their respective continued fractions. We pay close attention to the size of the denominators of these convergents.

We fix $\boldsymbol{\delta}:=(\delta_2,\delta_3,\delta_4, \delta_5) \in (0,1)^4$ such that $\delta_2<\delta_3<\delta_4$ (in reality, we will need more parameters at our disposal). This vector is chosen appropriately in the course of the proof of Theorem \ref{critrangethm}. To control the size of $\mathbb{S}_{\ell}$, it is necessary to have control over the discrepancy of the sequence $\alpha_{\ell} n^2$. The natural strategy is to use the Erd\"{o}s--Turan theorem in conjunction with Weyl's inequality for exponential sums whose argument is a quadratic polynomial. For this to work, the continued fraction expansion of $h \alpha_{\ell}$ for all $h \in [1,p^{\delta_5}]$ must have a convergent with denominator trapped in $[p^{\delta_2},p^{\delta_3}]$ say. This leads us to essentially partition the summation variable in \eqref{ineqdis} into three sets
\begin{equation*}
\ell \in \mathbb{F}^{\times}_p:=\mathcal{H}_1 \cup \mathcal{H}_2 \cup \mathcal{H}_3,
\end{equation*} 
which are described below. The subset $\mathcal{H}_1$ contains exactly those $\ell$ described and so the sum over such $\ell$ in \eqref{ineqdis} can be handled. 

The next subset of $\ell$ we consider are those such that there exists a $h_{\ell}  \in [1,p^{\delta_5}]$ (it may depend on $\ell$) such that $h_{\ell} \alpha_{\ell}$ has no convergent with denominator in the larger interval $[p^{\delta_2},p^{\delta_4}]$. There is no toggle to control the size of $\mathbb{S}_{\ell}$ here, but $h_{\ell} \alpha_{\ell}$ has two consecutive convergents whose denominators have a large gap. This is somewhat a rare event, and an argument with standard inequalities from continued fractions indeed forces the size of $\mathcal{H}_2$ to be small. 

This leaves the third and final set $\mathcal{H}_3$ to consider. For $\ell \in \mathcal{H}_3$, there exists a $h_{\ell} \in [1,p^{\delta_5}]$ such that $h_{\ell} \alpha_{\ell}$ has no convergent with denominator in $[p^{\delta_2},p^{\delta_3}]$, but guaranteed to have one, say $a^{\star}_{\ell}/b^{\star}_{\ell}$, with $b^{\star}_{\ell} \in (p^{\delta_3},p^{\delta_4}]$. Here we must study the sizes of $\mathcal{H}_3$ and $\mathbb{S}_{\ell}$ (which is equivalent to analysing $\alpha_{\ell} n^2$) simultaneously. We make this precise now. For each $\ell$, denote 
\begin{equation*}
\mathbb{V}_{\ell}:=\Big \{0 \leq n \leq p^2/N: \| \alpha_{\ell} n^2+\beta_{\ell}  \| \leq \frac{8 p}{N}   \Big \},
\end{equation*}
and for each $p^{\delta_3} \leq U \leq p^{\delta_4}$ and $0 \leq V \leq p^2/N$ we define
\begin{equation*}
\mathcal{E}(U,V):=\{\ell \in \mathcal{H}_3: b^{*}_{\ell} \in [U,2U] \quad \text{and} \quad |\mathbb{V}_{\ell}| \in [V,2V]  \}.
\end{equation*}
Thus
\begin{equation} \label{refinedintro}
\sum_{\ell \in \mathcal{H}_3} |A_{\ell}| |\mathbb{S}_{\ell} | \ll M \log^2 p \max_{p^{\delta_3} \leq U \leq p^{\delta_4}} \max_{1 \ll V \leq p^2/N} V \cdot |\mathcal{E}(U,V)|.
\end{equation}  
We can assume $V$ is moderately large by trivial considerations. Thus we need to bound $V \cdot |\mathcal{E}(U,V)|$.  For each $\ell \in \mathcal{E}(U,V)$, we now construct an algebraic set $\mathfrak{C}_{\ell} \subseteq \mathbb{F}^3_p$ with restricted variables. Arrange the numbers $n_{\ell,j} \in \mathbb{V}_{\ell}$ with order
 \begin{equation} \label{stringintro}
0 \leq n_{\ell,1}<n_{\ell,2}<\cdots<n_{\ell,|\mathbb{V}_{\ell}|} \leq p^2/N.
 \end{equation}
 The average consecutive gap between these numbers is 
 \begin{equation*}
 \frac{p^2}{N |\mathbb{V}_{\ell}|} \asymp \frac{p^2}{NV}.
 \end{equation*}
 More than $|\mathbb{V}_{\ell}|/2$ consecutive gaps are less than or equal to $2p^2/(N |\mathbb{V}_{\ell}|)$. By the pigeonhole principle there exists an integer $1 \leq d_{\ell} \leq 2p^2/N |\mathbb{V}_{\ell}|$ that is repeated as a consecutive gap greater than or equal to $|\mathbb{V}_{\ell}|^2 N/4p^2$ times.  Thus we consider
\begin{multline} \label{syscong1intro}
 \mathfrak{C}_{\ell}:= \big \{ (n,A,B) \in [1,p^2/N] \times [-8p^2/N,8p^2/N]^2 : \overline{\ell}^2 n^2+\ell^2 \equiv A \pmod{p} \\
    \quad \text{and} \quad \overline{\ell}^2 (n+d_{\ell})^2+\ell^2  \equiv B \pmod{p} \big \},
\end{multline}
and 
\begin{equation*}
\mathfrak{U}(U,V):= \bigcup_{\ell \in \mathcal{E}(U,V)} \{\ell \} \times \mathfrak{C}_{\ell} \subseteq \mathbb{F}^4_p,
\end{equation*}
and perform an overall count of points in this last set in order to obtain a contradiction, unless the
bound in the statement of Theorem \ref{critrangethm} holds. On the one hand, we know that the
size of this set should be large by construction. On the other hand, the nature of the set
forces it to be thin enough. This eventually leads to the proof of Theorem \ref{critrangethm}.

The anatomy of the paper is as follows. Section \ref{holohalf} gives background material on holomorphic half--integer weight modular forms. Section \ref{core} contains the main argument to establish Theorem \ref{secondmoment}. The main terms are extracted using Kronecker's first limit formula, and auxiliary bounds for sums of Fourier coefficients that are needed are also listed there.  Propositions \ref{critandupper} and \ref{spectralest} are the main inputs used in the proof of Theorem \ref{secondmoment}. Proposition \ref{spectralest} uses spectral techniques. The relevant automorphic preliminaries are contained in Section \ref{autprelim2} and Proposition \ref{spectralest} is proved in Section \ref{spectralsec}. Proposition \ref{critandupper} contains the critical range bounds and takes Theorem \ref{critrangethm} as input. Theorem \ref{critrangethm} is proved in Section \ref{critsec}.

\section{Conventions}
All implied constants in proofs are allowed to depend on $\varepsilon>0$ (possibly different in each instance), $f \in \mathcal{S}_k(4)$, and 
the fixed smooth functions introduced in various partitions of unity.
The square root $\sqrt{}$ denotes the principal branch of the square root.

\section{Acknowledgement}
We thank Matthew Young and Djordje Mili\'{c}evi\'{c} for their comments on the manuscript and Valentin Blomer and E.~Mehmet Kiral for helpful correspondences. 
We also thank the referee for their meticulous comments on the manuscript.

\section{Automorphic preliminaries I (half integral weight)}  \label{holohalf}
\subsection{Holomorphic cusp forms and $L$-functions} \label{holoaut}
For $\tau:=x+iy \in \mathbb{H}$, define $q:=e^{2 \pi i \tau}$. Let 
\begin{equation*}
\theta(\tau):=\sum_{n=-\infty}^{\infty} q^{n^2},
\end{equation*}
and 
\begin{equation*}
\eta(\tau):=q^\frac1{24}\prod_{n=1}^\infty(1-q^n)
\end{equation*}
be the fundamental theta functions. Define the theta multiplier $\nu_{\theta}$ on $\Gamma_0(4)$ by  
\begin{equation*}
\theta(\gamma \tau)=\nu_{\theta}(\gamma) \sqrt{c \tau+d} \hspace{0.1cm} \theta(\tau), \quad \text{for} \quad \gamma=\begin{pmatrix}
a & b \\
c & d
\end{pmatrix} \in \Gamma_0(4),
\end{equation*}
and the eta multiplier $\nu_{\eta}$ on $\text{SL}_2(\mathbb{Z})$ given by
\begin{equation*}
\eta(\gamma \tau)=\nu_{\eta}(\gamma) \sqrt{c \tau+d} \hspace{0.1cm} \eta(\tau), \quad \text{for} \quad \gamma=\begin{pmatrix}
a & b \\
c & d
\end{pmatrix} \in \text{SL}_2(\mathbb{Z}).
\end{equation*}

The theta multiplier is given by the formula
\begin{equation} \label{thetamultiplier}
\nu_{\theta}(\gamma)=\varepsilon^{-1}_d \Big( \frac{c}{d} \Big),
\end{equation}
where $\big( \frac{\bullet}{\bullet}  \big)$ denotes the Kronecker symbol and 
\begin{equation} \label{epsdefn}
\varepsilon_d:=\begin{cases}
1 & \text{ if } d \equiv 1 \pmod{4} \\
i & \text{ if } d \equiv 3 \pmod{4}.
\end{cases}
\end{equation}
For $j \geq 1$ and $k:=\frac{1}{2}+2j$, let $\mathcal{S}_k (4)$ denote the space of holomorphic cusp forms of weight $k$, level $4$ and trivial nebentypus. If $f \in \mathcal{S}_k (4)$, then $f: \mathbb{H} \rightarrow \mathbb{C}$ is holomorphic, vanishes on all three cusps of $\Gamma_0(4)$, and satisfies
\begin{equation*}
f(\gamma \tau)=\nu_{\theta}(\gamma) (c \tau+d)^k f(\tau) \quad \text{for all} \quad \gamma \in \Gamma_0(4).
\end{equation*}
For $f,g \in \mathcal{S}_k(4)$, define the Petersson inner product
\begin{equation*}
\langle f, g \rangle:=\int_{\Gamma_0(4) \backslash \mathbb{H}} y^k f(\tau) \overline{g(\tau)} d \mu(\tau), \quad d \mu(\tau):=\frac{dx dy}{y^2}.
\end{equation*} 
Recall that $\mathcal{S}_k(4)$ becomes a Hilbert space with the inner product defined above. 

Let the Fourier expansion of $f$ at $\infty$ be given by
\begin{equation} 
f(\tau)=\sum_{n=1}^{\infty} b(n) e(n \tau)=\sum_{n=1}^{\infty} a(n) n^{\frac{k-1}{2}} e(n \tau).
\end{equation}
It follows from \cite[pg~786]{DI} that for $X \geq 1$ we have 
\begin{equation} \label{L2}
\sum_{n \leq X} |a(n)|^2 \ll_{f} X \log X.
\end{equation}
A Wilton type bound also follows from \cite[pg~786]{DI}, 
\begin{equation} \label{wilton}
\sum_{n \leq X} a(n) e(n \alpha) \ll_{f} X^{\frac{1}{2}} \log^2 X,
\end{equation}
where the implied constant is uniform with respect to $\alpha \in \mathbb{R}$.

For odd primes $q$, the Hecke operators $\mathcal{T}_{q^2}$ defined on $\mathcal{S}_k(4)$ (with $k=\frac{1}{2}+2j$) are defined by  
\begin{equation*}
\mathcal{T}_{q^2} f(\tau):=\sum_{n \geq 1} \Big( b(q^2n)+\Big(\frac{n}{q} \Big) q^{k-\frac{3}{2}} b(n)+q^{2k-2} b \Big(\frac{n}{q^2} \Big) \Big) e(n \tau).
\end{equation*}
Here we have used the convention that $b(x)=0$ unless $x \in \mathbb{Z}$. We call a half-integral weight cusp form a Hecke cusp form if $\mathcal{T}_{q^2} f=\lambda(q) f$ for all $q>2$.  The Kohnen plus space $\mathcal{S}_k^{+}(4)$ denotes the subspace of $\mathcal{S}_{k}(4)$ consisting of cusp forms $f$ whose Fourier coefficients satisfy 
\begin{equation*}
b(n)=0 \quad \text{unless} \quad n \equiv 0,1 \pmod{4}.
\end{equation*} 
The plus space has a basis of simultaneous eigenfunctions of the $\mathcal{T}_{q^2}$ for $q$ odd. For $f \in \mathcal{S}^{+}_k(4)$, there exists a Hecke cusp form $g \in \mathcal{S}_{2k-1}(1)$  (via the Shimura lift), where $\mathcal{S}_{2k-1}(1)$ denotes the space of integer weight cusp forms of weight $2k-1$ and trivial nebentypus.
This correspondence has the property that $H_q g=\lambda(q) g$, where $H_q$ denotes the usual integer weight Hecke operator on $\mathcal{S}_{2k-1}(1)$. By the strong multiplicity one theorem this determines $f$ up to scalar multiplication. Write the Fourier expansion of $g$ as 
\begin{equation*}
g(\tau)=\sum_{n=1}^{\infty} c(n) e(n \tau),
\end{equation*}
and normalise such that $c(1)=1$. We can normalise $f$ so that its coefficients lie in the field generated over $\mathbb{Q}$ by the coefficients $c(n)$ by \cite[Proposition~2.3.1]{Stev}, and hence are totally real algebraic numbers. We have the coefficient relation
\begin{equation} \label{heckecoeff}
b(d \delta^2)=b(d) \sum_{e \mid \delta} \mu(e) e^{k-\frac{3}{2}} \psi_d(e) c \Big( \frac{\delta}{e} \Big),
\end{equation}
where $\psi_d$ was defined in \eqref{quadchard}. Recalling that both $f$ and $g$ have been normalised, the Kohnen--Zagier formula \cite[Theorem~1]{KZ} asserts  
\begin{equation} \label{KZ}
\frac{b(d)^2}{\frac{1}{6} \langle f,f \rangle}=\frac{\big(k-\frac{3}{2} \big)!}{\pi^{k-\frac{1}{2}}} d^{k-1} \frac{L(g \otimes \psi_d,1/2)}{\langle g,g \rangle}.
\end{equation}
Combining \eqref{heckecoeff} and \eqref{KZ} we see that the Lindel\"{o}f hypothesis for all quadratic twists of the Shimura lift of $f$ implies the Ramanujan--Petersson conjecture
\begin{equation} \label{bbound}
b(n) \ll_{f,\varepsilon} n^{\frac{k-1}{2}+\varepsilon} \quad \text{for all} \quad n \in \mathbb{N}.
\end{equation} 
Recalling the normalisation in \eqref{fourier}, we obtain
\begin{equation} \label{ram}
a(n) \ll_{f,\varepsilon} n^{\varepsilon}.
\end{equation}
It is well known that (cf \cite[(1.1)]{I2})
\begin{equation}  \label{iwaniec}
a(n) \ll_{f,\varepsilon} n^{\frac{1}{4}+\varepsilon} \quad \text{for all} \quad n \in \mathbb{N}.
\end{equation}
There has been considerable progress toward \eqref{ram}. Iwaniec \cite{I2} proved that
\begin{equation} \label{314}
a(n) \ll_{f,\varepsilon} n^{\frac{3}{14}+\varepsilon},
\end{equation}
for all squarefree $n$.
Conrey and Iwaniec \cite{CI} improved \eqref{314} to a Weyl-type subconvex bound
\begin{equation} \label{CIbound}
a(n) \ll_{f,\varepsilon} n^{\frac{1}{6}+\varepsilon},
\end{equation}
for all squarefree $n$.
Conrey and Iwaniec achieved this bound by estimating a moment involving the $L$-values $L(1/2,g \otimes \psi_n)^3$ summed
over all primitive cusp forms $g$ of level dividing $n$ and fixed integral weight.
If $f$ a Hecke cusp form then \eqref{CIbound} can be extended to all $n \in \mathbb{N}$ via \eqref{heckecoeff}.  

Define the operator on $\mathcal{S}_k(4)$ given by
 \begin{align*}
(W_4 f)(\tau)&:=(2i \tau)^{-k} f \Big(\frac{-1}{4 \tau} \Big).
\end{align*}
Note that $W_4$ is an involution. Since $W_4$ commutes with each $\mathcal{T}_{p^2}$, a Kohnen newform 
is also an eigenfunction of $W_4$ with eigenvalue $\varepsilon(f)=\pm 1$ by the strong multiplicity one theorem
for the plus space. 

Let $Q \in \mathbb{N}$ and $\chi$ be a Dirichlet character of modulus $Q$ and conductor $Q^{\star}$. Then if 
\begin{equation*}
f(\tau)=\sum_{n=1}^{\infty} b(n) q^n \in \mathcal{S}_k (4),
\end{equation*}
define the $\chi$-twist by 
\begin{equation*}
f_{\chi}(\tau):=\sum_{n=1}^{\infty} b(n) \chi(n) q^n \in \mathcal{S}_k (4{Q^{\star}}^2,\chi^2 ).
\end{equation*}
 The $L$-function of the twist $f_{\chi}$ (recall the normalisation in \eqref{fourier}) is defined by
\begin{equation*}
L(s,f,\chi):=\sum_{n=1}^{\infty} \frac{a(n) \chi(n)}{n^s}, \quad \Re s>1.
\end{equation*} 
After taking the Mellin transform as on \cite[pg.~694]{K}, the completed $L$-function of $f_{\chi}$ is given by
\begin{equation} \label{completed}
L^{*}(s,f,\chi)=2^{-1} \pi^{-\frac{k}{2}-s} (4 Q^2)^{\frac{s}{2}} \Gamma \Big(\frac{s+\frac{k-1}{2}}{2} \Big) \Gamma \Big(\frac{s+\frac{k+1}{2}}{2} \Big) L(s,f,\chi).
\end{equation} 

We now give some details regarding the functional equation of $L^{*}(s,f,\chi)$ in the case $(Q,4)=1$ following \cite{K}. Observe that $f_{\chi}$ can be realised as an average over additive twists
\begin{equation} \label{twist}
f_{\chi}(\tau)=\frac{1}{\mathcal{G}_{\overline{\chi}}(1;Q)} \sum_{u \hspace{-0.2cm} \pmod{Q} } \overline{\chi}(u) f \Big( \tau+\frac{u}{Q} \Big).
\end{equation}
Here for $c,n \in \mathbb{N}$ with $Q \mid c$, we denote the Gauss sum
\begin{equation} \label{gaussdef}
\mathcal{G}_{\chi}(n;c):=\sideset{}{^*} \sum_{d=1}^c \chi(d) e \Big( \frac{nd}{c} \Big),
\end{equation}
where $*$ denotes that the summation is over all $d$ modulo $c$ such that $(d,c)=1$. Using \eqref{twist}, we can rewrite \eqref{completed} as
\begin{equation} \label{newcomplete}
L^{*}(s,f,\chi)=\frac{1}{\mathcal{G}_{\overline{\chi}}(1;Q)} \sum_{u \hspace{-0.2cm} \pmod{Q}} \overline{\chi}(u) L^{*} \Big(s,f,\frac{u}{Q} \Big),
\end{equation}
where 
\begin{equation*}
L^{*} \Big(s,f, \frac{u}{Q} \Big):=(4 Q^2)^{\frac{s}{2}} \int_{0}^{\infty} f \Big( iy+\frac{u}{Q} \Big) y^{s+\frac{k-1}{2}} \frac{dy}{y}.
\end{equation*}
Let 
\begin{equation*}
\widetilde{\psi}_Q(\bullet):=\Big( \frac{\small{\bullet}}{Q} \Big).
\end{equation*}
Suppose $u$ and $v$ are any integers satisfying $4uv \equiv -1 \pmod{Q}$. A computation using the matrix identity   
\begin{equation*} \label{matrix}
\begin{pmatrix}
1 & u/Q \\
0 & 1
\end{pmatrix} 
\begin{pmatrix}
0 & -1/Q \\
4Q & 0
\end{pmatrix}=\begin{pmatrix}
(4uv+1)/Q & u \\
4v & Q
\end{pmatrix}
\begin{pmatrix}
0 & -1 \\
4 & 0
\end{pmatrix}
\begin{pmatrix}
1 & v/Q \\
0 & 1
\end{pmatrix},
\end{equation*} 
and the fact that 
\begin{equation*}
W_4 f=\varepsilon(f) f,
\end{equation*}
gives the relation
\begin{equation} \label{funcadd}
L^{*} \Big(s,f,\frac{u}{Q} \Big)=\varepsilon(f) \varepsilon_Q^{-2k} \Big( \frac{v}{Q} \Big) L^{*} \Big(1-s,f,\frac{v}{Q} \Big).
\end{equation}
Observe that \eqref{newcomplete} and \eqref{funcadd} imply the functional equation 
\begin{equation} \label{functionaleq}
L^*(s,f,\chi)=\varepsilon^{*}(f,\chi) L^{*}(1-s,f,\chi, \chi \widetilde{\psi}_Q),
\end{equation}
where $\varepsilon^*(f,\chi)$ is a quantity of absolute value one 
\begin{equation*}
\varepsilon^*(f,\chi):=\varepsilon(f) \varepsilon_Q^{-2k} \chi(-4),
\end{equation*} 
and the $L$-function defined on the right of \eqref{functionaleq} is defined by 
\begin{equation} \label{funceq}
L^*(s,f,\chi, \chi \widetilde{\psi}_Q):=(4 Q^2)^{\frac{s}{2}} (2 \pi)^{-(s+\frac{k-1}{2})} \Gamma \Big( s+\frac{k-1}{2} \Big) \frac{1}{\mathcal{G}_{\overline{\chi}}(1;Q) } \sum_{n=1}^{\infty} \frac{a(n) \mathcal{G}_{\chi \widetilde{\psi}_Q}(n;Q)}{n^s}.
\end{equation}
The Fourier coefficients in \eqref{fourier} are normalised as such to make \eqref{functionaleq} symmetric about the point $s=1/2$.

A computation following \cite[Chapter~5]{IK} shows that \eqref{functionaleq} implies an approximate functional equation. Let $V: \mathbb{R}_{>0} \rightarrow \mathbb{R}$ be defined by 
\begin{equation*}
V(x)=\frac{1}{2 \pi i } \int_{(3)} \frac{x^{-z}}{(2 \pi)^z} \frac{\Gamma \big(z+\frac{k}{2} \big)}{\Gamma(\frac{k}{2})} \frac{dz}{z}.
\end{equation*}
We have
\begin{equation} \label{approxfunceq}
L \Big(\frac{1}{2},f, \chi \Big)=\sum_{m=1}^{\infty} \frac{a(m) \chi(m)}{\sqrt{m}} V \Big(\frac{m}{2 Q} \Big)+ \frac{\varepsilon^*(f,\chi)}{\mathcal{G}_{\overline{\chi}}(1;Q)} \sum_{m=1}^{\infty} \frac{a(m) \mathcal{G}_{\chi \widetilde{\psi}_Q}(m;Q)}{\sqrt{m}} V \Big( \frac{m}{2 Q} \Big).
\end{equation}

\subsection{Voronoi summation}
Let $V:(0,\infty) \rightarrow \mathbb{C}$ be a smooth function with compact support. Define the Hankel transform
\begin{equation*}
\mathring{V}(y):=2 \pi i^k \int_{0}^{\infty} V(x) J_{k-1} (4 \pi  \sqrt{xy}) dx,
\end{equation*}
where $J$ denotes the usual $J$--Bessel function. Note that $\mathring{V}$ depends on $k$ but is not displayed in the notation. We now see that $\mathring{V}$ is a Schwartz function. By \cite[Section~2.6]{BM2} we have 
\begin{equation} \label{BM2}
\int_{0}^{\infty} V(x) J_{k-1}(4 \pi \sqrt{xy}) dx= \Big({-\frac{1}{2 \pi \sqrt{y}}} \Big)^j \int_0^{\infty} \frac{\partial^j}{\partial x^j} \Big(V(x) x^{-\frac{k-1}{2}} \Big) x^{\frac{k-1+j}{2}} J_{k-1+j}(4 \pi \sqrt{xy}) dx,
\end{equation}
for any $j \in \mathbb{N}_0$. One can then differentiate repeatedly under the integral sign in \eqref{BM2} using \cite[(8.471.2)]{GR}.

The next lemma follows from \cite[pg.~792]{DI}.
 
\begin{lemma}  \label{Voronoi}
Let $c \in \mathbb{N}$ such that $4 \mid c$ and
\begin{equation*}
\gamma=\begin{pmatrix}
a & b \\
c & d
\end{pmatrix} \in \Gamma_0(c).
\end{equation*}
Let $V:(0,\infty) \rightarrow \mathbb{C}$ be a smooth function with compact support. Suppose $k=\frac{1}{2}+2 j$ with $j \in \mathbb{N}$ and let $a(n)$ denote the normalised Fourier coefficients of $f \in \mathcal{S}_k(4)$ as in \eqref{fourier}. Then for $X>0$ we have
\begin{equation*}
\sum_n a(n) e \Big( \frac{an}{c} \Big) V \Big( \frac{n}{X} \Big)= \frac{X}{c} \nu_{\theta}(\gamma) \sum_n a(n) e \Big({-\frac{d n}{c}} \Big) \mathring{V} \Big( \frac{n}{c^2/X} \Big).
\end{equation*}
\end{lemma}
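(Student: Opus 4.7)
The plan is to prove the identity via Mellin inversion combined with the automorphy of $f$ under $\gamma \in \Gamma_0(c)$. Since $V$ is smooth and compactly supported, its Mellin transform $\widetilde{V}(s) := \int_0^\infty V(x) x^{s-1} dx$ is entire and decays faster than any polynomial on vertical strips. Defining the additively twisted Dirichlet series
\begin{equation*}
L(s, a/c) := \sum_{n \ge 1} \frac{a(n)}{n^s} e\!\left(\frac{an}{c}\right),
\end{equation*}
which is absolutely convergent for $\Re s > 1$ by \eqref{iwaniec}, Mellin inversion writes the left-hand side of the claim as
\begin{equation*}
\sum_n a(n) e(an/c) V(n/X) = \frac{1}{2\pi i} \int_{(\sigma)} \widetilde{V}(s) X^s L(s, a/c) \, ds
\end{equation*}
for any $\sigma > 1$.

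The second step is to derive a functional equation for $L(s, a/c)$ from the automorphy of $f$. A direct matrix computation shows that $\gamma \cdot \bigl(-d/c + i/(c^2 y)\bigr) = a/c + iy$, so the transformation rule $f(\gamma \tau) = \nu_\theta(\gamma)(c\tau+d)^k f(\tau)$ yields
\begin{equation*}
f\!\left(\frac{a}{c}+iy\right) = \nu_\theta(\gamma) \left(\frac{i}{cy}\right)^{\!k} f\!\left(-\frac{d}{c}+\frac{i}{c^2 y}\right).
\end{equation*}
Taking the Mellin transform in $y$ of both sides, inserting the Fourier expansions \eqref{fourier}, evaluating $\int_0^\infty e^{-2\pi n y} y^{s-1} dy = \Gamma(s)(2\pi n)^{-s}$ on the left, and performing the change of variable $y \mapsto 1/(c^2 y)$ on the right yields, after rescaling to pass from $b(n)$ to $a(n) = b(n)/n^{(k-1)/2}$, a functional equation of the schematic form
\begin{equation*}
L(w, a/c) = \nu_\theta(\gamma) \, i^k \, c^{1-2w} (2\pi)^{2w-1} \frac{\Gamma\!\left(\tfrac{k+1}{2} - w\right)}{\Gamma\!\left(w + \tfrac{k-1}{2}\right)} L(1 - w, -d/c).
\end{equation*}
Cuspidality of $f$ forces both $L(w, a/c)$ and $L(1-w, -d/c)$ to be entire, so this identity extends to all $w \in \C$ by analytic continuation.

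Substituting the functional equation into the Mellin--Barnes integral and shifting the contour past $\Re w = 1/2$ (no poles are encountered), one may interchange summation and integration to express the left-hand side as a constant multiple of
\begin{equation*}
\nu_\theta(\gamma) \, i^k \sum_n a(n) e(-dn/c) \cdot \frac{1}{2\pi i} \int_{(\sigma')} \widetilde{V}(w) \!\left(\frac{(2\pi)^2 n X}{c^2}\right)^{\!w} \frac{\Gamma\!\left(\tfrac{k+1}{2} - w\right)}{\Gamma\!\left(w + \tfrac{k-1}{2}\right)} dw.
\end{equation*}
The inner contour integral is then recognised, via the Mellin transform evaluation $\int_0^\infty u^{\mu-1} J_{k-1}(u) \, du = 2^{\mu-1} \Gamma\bigl(\tfrac{k-1+\mu}{2}\bigr)/\Gamma\bigl(\tfrac{k+1-\mu}{2}\bigr)$, as proportional to $\int_0^\infty V(x) J_{k-1}\!\left(4\pi\sqrt{xn/(c^2/X)}\right) dx$. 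Reinserting the factor $2\pi i^k$ built into the definition of $\mathring V$ delivers the right-hand side of the claim with the correct prefactor $X/c$.

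The principal obstacle is the careful bookkeeping of the many constants: powers of $i$, $c$, $2\pi$, and the $\Gamma$-factors must combine exactly with the prefactor $2\pi i^k$ in the definition of $\mathring V$ and with the theta multiplier $\nu_\theta(\gamma)$. The half-integrality $k = \tfrac{1}{2} + 2j$ enters subtly through the branch conventions for $(c\tau + d)^k$ fixed by $\nu_\theta$, and must not produce a spurious sign or phase; this is where the restriction $4 \mid c$ and the specific formula for $\nu_\theta$ are used. The contour-shift and Fubini steps are justified routinely using the super-polynomial decay of $\widetilde V$ on vertical lines together with \eqref{iwaniec}. With these in hand the argument is an adaptation of the classical integer-weight Voronoi derivation cited from \cite[pg.~792]{DI} to the half-integral weight setting.
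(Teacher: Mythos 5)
Your proposal is correct, and it is essentially the standard derivation that the cited reference \cite[pg.~792]{DI} employs; the paper itself gives no proof, deferring entirely to that citation, so you have simply made the argument self-contained. I checked the bookkeeping you flagged as the principal obstacle: with $s = w + \tfrac{k-1}{2}$ in the Mellin transform of $f(a/c+iy)$, the automorphy relation yields exactly
\begin{equation*}
L(w,a/c) = \nu_\theta(\gamma)\, i^k\, c^{1-2w}(2\pi)^{2w-1}\,\frac{\Gamma\!\left(\tfrac{k+1}{2}-w\right)}{\Gamma\!\left(w+\tfrac{k-1}{2}\right)}\, L(1-w,-d/c),
\end{equation*}
and unwinding the definition of $\mathring V$ via the Mellin--Barnes formula $\int_0^\infty u^{1-2s} J_{k-1}(u)\,du = 2^{1-2s}\Gamma(\tfrac{k+1}{2}-s)/\Gamma(\tfrac{k-1}{2}+s)$ gives
\begin{equation*}
\frac{1}{2\pi i}\int_{(\sigma')}\widetilde V(w)\left(\frac{4\pi^2 nX}{c^2}\right)^{\!w}\frac{\Gamma(\tfrac{k+1}{2}-w)}{\Gamma(w+\tfrac{k-1}{2})}\,dw = \frac{2\pi\, nX/c^2}{i^k}\,\mathring V\!\left(\frac{nX}{c^2}\right),
\end{equation*}
which, inserted into your penultimate display, makes the $i^k$, $c$, $2\pi$, and $n$ factors cancel cleanly against $X/c$. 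The contour shift to $\Re w < -\tfrac14$ (so that $L(1-w,-d/c)$ converges absolutely via \eqref{iwaniec}) encounters no poles since $\widetilde V$ is entire and the first pole of $\Gamma(\tfrac{k+1}{2}-w)$ sits at $\Re w = \tfrac{k+1}{2} \geq \tfrac74$, and the branch of $(c\tau+d)^k = (i/(cy))^k = i^k (cy)^{-k}$ is the principal one because $c>0$ forces $c\tau+d$ into the upper half plane. So the argument is complete modulo routine Fubini justifications, as you say.
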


\subsection{Half integral weight Kloosterman sums and Sali\'{e} sums}
Let $\kappa,c,m,n \in \mathbb{N}$ such that $\kappa \equiv 1 \pmod{2}$ and $4 \mid c$. Then for any Dirichlet character $\chi$ modulo $c$ define
\begin{equation} \label{Kkappadef}
K_{\kappa,\chi}(m,n;c):=\sideset{}{^*} \sum_{d \hspace{-0.2cm} \pmod{c}} \varepsilon^{-\kappa}_d \Big( \frac{c}{d} \Big) \chi(d) e \Big( \frac{m d +n \overline{d}}{c} \Big).
\end{equation}
When $\chi=\mathbf{1}_c$ in \eqref{Kkappadef} we suppress the subscript.
For $q \in \mathbb{N}$ with $q \equiv 1 \pmod{2}$ and any Dirichlet character $\Psi$ modulo $q$ define the twisted sums
\begin{equation} \label{Sdef}
S_{\Psi}(m,n;q)=\sideset{}{^*} \sum_{x \hspace{-0.2cm} \pmod{q}} \Big( \frac{x}{q} \Big) \Psi(x) e \Big(\frac{m x+n \overline{x}}{q}  \Big).
\end{equation}
When $\Psi=\mathbf{1}_q$ in \eqref{Sdef}, we recover the well known Sali\'{e} sum, and suppress the subscript. These sums have a closed form evaluation, unlike the Kloosterman sums attached to the trivial multiplier. Sarnak \cite[pg~90]{Sa} asserts that this phenomenon is the finite analogue the of Bessel function being an elementary function when its order is an odd half integer. For example,
\begin{equation*}
J_{\frac{1}{2}}(x)=\sqrt{\frac{2}{\pi x}} \sin x, \quad J_{-\frac{1}{2}}(x)=\sqrt{\frac{2}{\pi x}} \cos x.
\end{equation*}
When $q=p$ a prime and $(mn,p)=1$, \cite{Sal} gives 
\begin{equation} \label{salieval}
S(m,n;p)=
\begin{cases}
\big( \frac{n}{p} \big) \varepsilon_p \sqrt{p} \hspace{0.15cm} \sum_{\substack{x \hspace{-0.2cm} \pmod{p} \\ x^2 \equiv mn \hspace{-0.2cm} \pmod{p}}} e \big( \frac{2x}{p} \big) & \text{ if } \quad  \big( \frac{mn}{p}  \big)=1   \\
0 & \text{if} \quad \big( \frac{mn}{p}  \big)=-1,
\end{cases}
\end{equation}
where $\varepsilon_d$ is given by \eqref{epsdefn}.

We have the following useful twisted multiplicativity lemma.  
\begin{lemma} \label{twistlem}
Suppose $c=qr$ with $r \equiv 0 \pmod{4}$ and $(q,r)=1$ any Dirichlet character $\Psi$ is a Dirichlet character mod $c$. Let $\Psi_r$ and $\Psi_q$ are Dirichlet characters modulo $r$ and $q$ respectively such that $\Psi=\Psi_r \Psi_q$. Then 
\begin{equation*}
K_{\kappa,\Psi}(m,n;c)=K_{\kappa-q+1,\Psi_r}(m \overline{q},n \overline{q};r) S_{\Psi_q}(m \overline{r}, n \overline{r};q).
\end{equation*}
\end{lemma}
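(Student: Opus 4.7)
The plan is to reduce the assertion to a direct application of the Chinese Remainder Theorem combined with a careful factorisation of the theta-type multiplier $\varepsilon_d^{-\kappa}\bigl(\tfrac{c}{d}\bigr)$. Parametrise the summation $d \pmod c$ by pairs $(d_r, d_q)$ with $d_r \pmod r$, $d_q \pmod q$, both coprime to their respective moduli (so, in particular, $d$ and $d_r$ are odd since $2 \mid r$). The additive character factors in the usual way:
\begin{equation*}
e\!\left(\frac{md+n\bar d}{c}\right) = e\!\left(\frac{(m\bar q)d_r+(n\bar q)\overline{d_r}}{r}\right)\cdot e\!\left(\frac{(m\bar r)d_q+(n\bar r)\overline{d_q}}{q}\right),
\end{equation*}
where $\bar q$ (resp.\ $\bar r$) denotes the inverse of $q$ mod $r$ (resp.\ $r$ mod $q$). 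The character $\Psi$ factors as $\Psi(d)=\Psi_r(d_r)\Psi_q(d_q)$ by hypothesis. The whole identity will follow once $\varepsilon_d^{-\kappa}\bigl(\tfrac{c}{d}\bigr)$ is shown to factor into a part depending only on $d_r$ and a part depending only on $d_q$, with precisely the shape demanded by the right-hand side.

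For this factorisation I would first observe that, because $4\mid r$, we have $\varepsilon_d=\varepsilon_{d_r}$. Next I would split the Kronecker symbol as $\bigl(\tfrac{c}{d}\bigr)=\bigl(\tfrac{q}{d}\bigr)\bigl(\tfrac{r}{d}\bigr)$. For the first factor, quadratic reciprocity (valid since both $q$ and $d$ are odd positive) gives
\begin{equation*}
\left(\frac{q}{d}\right)=\left(\frac{d}{q}\right)(-1)^{\frac{q-1}{2}\frac{d-1}{2}}=\left(\frac{d_q}{q}\right)\,\varepsilon_{d_r}^{\,q-1},
\end{equation*}
using $\varepsilon_d^2=(-1)^{(d-1)/2}$ and $d\equiv d_q\pmod q$, $d\equiv d_r\pmod 4$. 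For the second factor, writing $r=2^{a}r_{0}$ with $r_0$ odd and $a\ge 2$, reciprocity shows $\bigl(\tfrac{r_0}{d}\bigr)$ depends only on $d\bmod 4r_0$, and $\bigl(\tfrac{2}{d}\bigr)^{a}$ is trivial when $a=2$ and depends on $d\bmod 8$ when $a\ge 3$; either way it depends only on $d\bmod r$, so $\bigl(\tfrac{r}{d}\bigr)=\bigl(\tfrac{r}{d_r}\bigr)$. Combining,
\begin{equation*}
\varepsilon_d^{-\kappa}\!\left(\frac{c}{d}\right)=\varepsilon_{d_r}^{-(\kappa-q+1)}\!\left(\frac{r}{d_r}\right)\cdot\left(\frac{d_q}{q}\right),
\end{equation*}
which is exactly the split we need, with the shift $\kappa\mapsto\kappa-q+1$ preserving odd parity because $q$ is odd.

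Inserting this factorisation and the CRT split of the additive character and of $\Psi$ into the definition of $K_{\kappa,\Psi}(m,n;c)$, the double sum over $(d_r,d_q)$ decouples. The $d_r$-sum is precisely
\begin{equation*}
\sideset{}{^*}\sum_{d_r\hspace{-0.15cm}\pmod{r}}\varepsilon_{d_r}^{-(\kappa-q+1)}\!\left(\frac{r}{d_r}\right)\Psi_r(d_r)\,e\!\left(\frac{(m\bar q)d_r+(n\bar q)\overline{d_r}}{r}\right)=K_{\kappa-q+1,\Psi_r}(m\bar q,n\bar q;r),
\end{equation*}
and the $d_q$-sum is $S_{\Psi_q}(m\bar r,n\bar r;q)$ by the definition \eqref{Sdef}. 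Multiplying the two yields the claimed identity.

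The only genuinely delicate step is the reciprocity manipulation in the middle paragraph; once the sign $(-1)^{\frac{q-1}{2}\frac{d-1}{2}}$ is identified with $\varepsilon_{d_r}^{q-1}$, everything else is bookkeeping with CRT. I would therefore expect this to be the main (but still modest) obstacle, and the rest of the argument to be essentially formal.
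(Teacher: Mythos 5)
Your proof is correct. The paper states Lemma~\ref{twistlem} without supplying a proof, so there is nothing to compare against; your argument — the CRT splitting $d\leftrightarrow(d_r,d_q)$, factorisation of the additive character, the observation that $\varepsilon_d=\varepsilon_{d_r}$ because $4\mid r$, the split $\bigl(\tfrac{c}{d}\bigr)=\bigl(\tfrac{q}{d}\bigr)\bigl(\tfrac{r}{d}\bigr)$, and the use of Jacobi reciprocity to identify $(-1)^{\frac{q-1}{2}\frac{d-1}{2}}$ with $\varepsilon_{d_r}^{\,q-1}$ and to show $\bigl(\tfrac{r}{d}\bigr)$ depends only on $d\bmod r$ — is the standard and correct route to twisted multiplicativity for these half-integral-weight Kloosterman sums. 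All the hypotheses are used in the right places ($q$ odd so reciprocity applies and the weight shift $\kappa\mapsto\kappa-q+1$ preserves odd parity; $4\mid r$ so $\varepsilon$ and the $2$-part of the Kronecker symbol are controlled by $d_r$), and the bookkeeping decoupling the $d_r$- and $d_q$-sums into $K_{\kappa-q+1,\Psi_r}$ and $S_{\Psi_q}$ respectively is clean.
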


\subsection{Eisenstein series and Rankin--Selberg $L$-functions}
We give a brief background on the Eisenstein series and Rankin--Selberg $L$--functions relevant to the main terms appearing Theorem \ref{secondmoment} and in Section \ref{mainterms}. One can consult Section \ref{autprelim2} below and \cite[Chapter~13]{I1} for more details.
Let 
\begin{equation} \label{level4eis}
E_{\infty}(\tau,s):=\sum_{\small{\gamma \in \Gamma_0(4)_{\infty} \backslash \Gamma_0(4)}} (\Im(\gamma \tau))^s, \quad \Re(s)>1,
\end{equation}
denote the weight zero Eisenstein series of level $4$ attached to the cusp $\infty$. This Eisenstein series has a meromorphic continuation to all of $\mathbb{C}$ with its only pole in the region $\Re s \geq \frac{1}{2}$ being simple and at $s=1$, with residue 
\begin{equation} \label{volgamma04}
\text{Res}_{s=1} E_{\infty}(\tau,s)=\frac{1}{\text{Vol}(\Gamma_0(4) \backslash \mathbb{H})}=\frac{1}{2\pi}.
\end{equation}
For $f \in \mathcal{S}_k(4)$, consider the Rankin--Selberg $L$-function 
\begin{equation*}
L(s, f \times \overline{f})=\sum_{n=1}^{\infty} \frac{|a(n)|^2}{n^s} \quad \text{for} \quad \Re s>1.
\end{equation*}
The analytic continuation of $L(s, f \times \overline{f})$ is afforded by the above Eisenstein series. In particular, \cite[Proposition~13.1]{I1} asserts (after taking into account the
normalisations in the first two displays on \cite[pg.~233]{I1}):
\begin{equation} \label{rankin}
(4 \pi)^{-s-(k-1)} \Gamma(s+k-1) L(s, f \times \overline{f})=\int_{\Gamma_0(4) \backslash \mathbb{H}} y^k |f(\tau)|^2  E_{\infty}(\tau,s) d \mu(\tau), \quad \Re s>1.
\end{equation}
The function $L(s,f \times \overline{f})$ also satisfies a vector functional equation ($L(s,f \times \overline{f})$ is one of the vector entries)
with scattering matrix $\Phi(s,\chi_0)$,
where $\chi_0$ denotes the principal character modulo $4$, see \cite[Theorem~13.4]{I1}. By \cite[pg.~240]{I1} the relevant scattering matrix has finite order and hence $L(s,f \times \overline{f})$ has polynomial
growth in fixed vertical strips of $\mathbb{C}$ by the Phragmen-Lindel\"{o}f principle.
Consulting \cite[(13.34)]{I1}, there is a simple pole at $s=1$ with 
\begin{equation*}
\text{Res}_{s=1} L(s,f \times \overline{f})= \frac{(4\pi)^k}{\Gamma(k)} \frac{1}{\text{Vol}(\Gamma_0(4) \backslash \mathbb{H})}
\int_{\Gamma_0(4) \backslash \mathbb{H}} y^k |f(\tau)|^2 d \mu(\tau).
\end{equation*}

We will need the pole and constant term in the Laurent expansion of $E_{\infty}(\tau,s)$. We use M\"{o}bius inversion and the bijection
\cite[Lemma~7.1.6(1)]{Mi}
\begin{equation*}
\Gamma_0(4)_{\infty} \backslash \Gamma_0(4) \simeq  \{(c,d): c \equiv 0 \pmod{4}, \quad (c,d)=1 \quad \text{ and }  \quad d>0  \},
\end{equation*}
in \eqref{level4eis} to obtain
\begin{equation} \label{level4eis2}
E_{\infty}(\tau,s):=\frac{1}{2} \frac{1}{4^s} \frac{1}{L(2s,\chi_0)} \Big( \sideset{}{^\prime} \sum_{c,d} \frac{(4y)^s}{|c(4 \tau)+d |^{2s}} -\frac{1}{2^s} \sideset{}{^\prime}  \sum_{c,d} \frac{(2y)^s}{|c(2\tau)+d|^{2s}}  \Big), \quad \Re s>1,
\end{equation}
where $\chi_0$ denotes the principal character modulo $4$ and $^{\prime}$ denotes the exclusion of $(c,d)=(0,0)$. Both of the series on the right of \eqref{level4eis2} have meromorphic continuation to all of $\mathbb{C}$ with only simple poles at $s=1$ \cite[pg.~273]{La}. A computation that uses the Taylor expansion of $2^{-s}$
at $s=1$, and that applies Kronecker's first limit formula \cite[pg.~273]{La} to the right side of \eqref{level4eis2} gives (for each $\tau \in \mathbb{H}$) the Laurent expansion
\begin{equation} \label{kronecker1}
E_{\infty}(\tau,s)=\frac{1}{2} \frac{1}{4^s} \frac{1}{L(2s,\chi_0)} \Big(\frac{\pi/2}{s-1}+\pi \Big(\gamma-\frac{5}{2} \log 2 \Big)+2 \pi \log \Big( y^{-\frac{1}{4}} \Big | \frac{\eta(2 \tau)}{\eta(4 \tau)^2}  \Big | \Big)+O_{\tau}(s-1) \Big),
\end{equation} 
for $s \in \mathbb{C}$ such that $|s-1|<1$.

\subsection{Functional equation for the second moment}
Here we take $Q=p$ a prime with $p \equiv 1 \pmod{4}$, $\chi$ a primitive character modulo $p$ such that $\chi \neq \widetilde{\psi}_p$. Thus $\chi \widetilde{\psi}_p$ is primitive, so we may appeal to the properties of Gauss sums attached to primitive characters. Let $f \in \mathcal{S}^{+}_k(4)$ be a simultaneous Hecke eigenform, so it is automatically an eigenfunction of $W_4$. Also suppose $f$ normalised so that its coefficients are totally real algebraic numbers (cf. Section \ref{holoaut}). Applying \cite[Theorem~8.15]{A} we write \eqref{approxfunceq} as 
\begin{equation} \label{approxfunceq2}
L \Big(\frac{1}{2},f, \chi \Big)=\sum_{m=1}^{\infty} \frac{a(m) \chi(m)}{\sqrt{m}} V \Big(\frac{m}{2 p} \Big)  \\
+ \frac{\varepsilon^*(f,\chi)}{\mathcal{G}_{\overline{\chi}}(1;p)} \sum_{m=1}^{\infty} \frac{a(m) \overline{\chi} \widetilde{\psi}_p (m) \mathcal{G}_{\chi \widetilde{\psi}_p}(1;p)}{\sqrt{m}} 
V \Big( \frac{m}{ 2p} \Big).
\end{equation}
A computation with \eqref{approxfunceq2} using the fact that the Fourier coefficients are real shows that 
\begin{equation*}
\overline{L \Big(\frac{1}{2},f, \chi \Big)}=L \Big(\frac{1}{2},f, \overline{\chi} \Big).
\end{equation*}
Define $W: \mathbb{R}_{>0} \rightarrow \mathbb{R}$ by
\begin{equation*}
W(x)=\frac{1}{2 \pi i } \int_{(3)} \frac{x^{-z}}{(2 \pi)^{2z}} \frac{\Gamma(z+\frac{k}{2})^2}{\Gamma(\frac{k}{2})^2} \frac{dz}{z}.
\end{equation*}
A computation following \cite[Chapter~5]{IK} shows that \eqref{functionaleq} implies a second approximate functional equation
\begin{align} 
L \Big( \frac{1}{2},f,\chi \Big) L \Big( \frac{1}{2},f, \overline{\chi} \Big)&=\sum_{m,n} \frac{a(m) \overline{\chi}(m) a(n) \chi(n)}{\sqrt{mn}} W \Big( \frac{mn}{4p^2} \Big) \nonumber \\
&+\sum_{m,n} \frac{a(m)  \overline{\chi}(m)  \widetilde{\psi}_p(m) a(n) \chi(n) \widetilde{\psi}_p(n) }{\sqrt{mn}} W \Big( \frac{mn}{4p^2} \Big).  \label{approxsec}
\end{align}
Note that we have for all $A>0$ and $j \geq 0$ we have 
\begin{equation} \label{decaybd}
W^{(j)}(x) \ll_{A,j} (1+x)^{-A}.
\end{equation}

\section{The core argument} \label{core}
We have the following orthogonality lemma.
\begin{lemma}  \label{orthog}
Let $p$ be prime and $\widetilde{\psi}_p=\big( \frac{\bullet}{p} \big)$. For $m,n \in \mathbb{N}$ with $(nm,p)=1$, we have 
\begin{equation}
\sideset{}{^*} \sum_{\substack{\chi \hspace{-0.2cm} \pmod{p} \\ \chi \neq \widetilde{\psi}_p }} \overline{\chi}(m) \chi(n)=\sum_{\substack{d \mid p \\ d \mid (m-n) }} \phi(d) \mu \Big(\frac{p}{d}  \Big )-\widetilde{\psi}_p(m) \widetilde{\psi}_p(n).
\end{equation}
\end{lemma}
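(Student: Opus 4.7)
The plan is to reduce to the standard orthogonality relation for all Dirichlet characters modulo $p$ and then peel off the two characters that are not primitive-and-distinct-from-$\widetilde{\psi}_p$: the principal character and the quadratic character $\widetilde{\psi}_p$. Because $p$ is prime, every character modulo $p$ is either the principal character (conductor $1$) or primitive (conductor $p$), so the reduction from all characters to the primitive ones is extremely cheap here—no genuine M\"obius inversion over a divisor lattice is needed, just the removal of one term.

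Concretely, I would first write, for $(mn,p)=1$,
\begin{equation*}
\sideset{}{^*}\sum_{\chi \hspace{-0.15cm} \pmod p} \overline{\chi}(m)\chi(n) \;=\; \sum_{\chi \hspace{-0.15cm} \pmod p} \overline{\chi}(m)\chi(n) \;-\; 1,
\end{equation*}
the subtracted $1$ accounting for the principal character. By classical orthogonality the full sum equals $\phi(p)\mathbf{1}_{m\equiv n \pmod p}$, so the primitive sum equals $\phi(p)\mathbf{1}_{m\equiv n \pmod p}-1$. I would then check that this matches $\sum_{d\mid p,\; d\mid (m-n)} \phi(d)\mu(p/d)$ by splitting the $d$-sum into its two divisors: $d=1$ contributes $\phi(1)\mu(p)=-1$ and $d=p$ contributes $\phi(p)\mu(1)=\phi(p)$ precisely when $p\mid (m-n)$.

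Finally, to pass from the sum over all primitive $\chi$ to the sum with $\chi\neq \widetilde{\psi}_p$, subtract the single term $\overline{\widetilde{\psi}_p}(m)\,\widetilde{\psi}_p(n)$. Since $\widetilde{\psi}_p=\bigl(\tfrac{\bullet}{p}\bigr)$ is real-valued, this equals $\widetilde{\psi}_p(m)\widetilde{\psi}_p(n)$, which is exactly the correction term appearing on the right-hand side of the lemma.

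There is no genuine obstacle here; the statement is a bookkeeping identity. The only mild subtlety worth verifying carefully is the correspondence between the M\"obius/divisor formulation and the elementary $\phi(p)\mathbf{1}_{m\equiv n \pmod p}-1$ formula, but as noted this is immediate from the two-term evaluation of $\sum_{d\mid p,\; d\mid (m-n)} \phi(d)\mu(p/d)$.
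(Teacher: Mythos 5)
Your proof is correct. The paper states Lemma \ref{orthog} without proof, and your argument — using full orthogonality $\sum_{\chi \bmod p}\overline{\chi}(m)\chi(n)=\phi(p)\mathbf{1}_{m\equiv n\, (p)}$, subtracting the principal character (the only non-primitive character since $p$ is prime), matching the two-term divisor sum by hand, and finally peeling off the real-valued $\widetilde{\psi}_p$ term — is exactly the standard bookkeeping that the authors presumably had in mind.
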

Recalling \eqref{kiral} we have 
\begin{equation} \label{kiralquad}
\Big | L \Big (\frac{1}{2},f,\widetilde{\psi}_p \Big) \Big |^2 \ll_{\varepsilon} p^{\frac{3}{4}+\frac{\theta}{2}+\varepsilon}.
\end{equation}
Summing \eqref{approxsec} over all primitive characters $\chi \neq \widetilde{\psi}_p$ and applying Lemma \ref{orthog} we obtain 
\begin{equation} \label{first}
\sideset{}{^*} \sum_{\substack{\chi \hspace{-0.2cm} \pmod{p} \\ \chi \neq \widetilde{\psi}_p  }} L \Big (\frac{1}{2}, f, \chi \Big)  L \Big (\frac{1}{2}, f, \overline{\chi} \Big) =\mathcal{D}_1+\mathcal{D}_2+\mathcal{E},
\end{equation}
where
\begin{equation} \label{D1}
\mathcal{D}_1:=\sum_{d \mid p} \hspace{0.15cm} \phi(d) \mu \Big( \frac{p}{d} \Big)  \sum_{\substack{m \equiv n \hspace{-0.2cm} \pmod{d} \\ (mn,p)=1}}  \frac{a(m) a(n)}{\sqrt{mn}} W \Big( \frac{mn}{4p^2} \Big),
\end{equation}
\begin{equation} \label{D2}
\mathcal{D}_2:=\sum_{d \mid p} \hspace{0.15cm} \phi(d) \mu \Big( \frac{p}{d} \Big)  \sum_{\substack{m \equiv n \hspace{-0.2cm} \pmod{d} \\ (mn,p)=1}}  \frac{a(m) \widetilde{\psi}_p(m) a(n) \widetilde{\psi}_p(n)}{\sqrt{mn}} W \Big( \frac{mn}{4p^2} \Big).
\end{equation}
and 
\begin{equation} \label{E}
\mathcal{E}:=-\sum_{m,n} \frac{a(m) \widetilde{\psi}_p(m) a(n) \widetilde{\psi}_p(n)}{\sqrt{mn}} W \Big( \frac{mn}{4p^2} \Big)-\sum_{\substack{m,n \\ (mn,p)=1}} \frac{a(m) a(n)}{\sqrt{mn}} W \Big( \frac{mn}{4p^2} \Big).
\end{equation}
Combining \eqref{kiralquad}--\eqref{E} we obtain
\begin{equation} \label{auxmain}
\sideset{}{^*} \sum_{\chi \hspace{-0.2cm} \pmod{p}} L \Big (\frac{1}{2}, f, \chi \Big)  L \Big (\frac{1}{2}, f, \overline{\chi} \Big) =\mathcal{D}_1+\mathcal{D}_2+\mathcal{E}+O \big(p^{\frac{3}{4}+\frac{\theta}{2}+\varepsilon} \big).
\end{equation}
\begin{remark} \label{subconvexexponent}
Note that any subconvex exponent $3/4+\theta/2<1$ in \eqref{kiralquad} would be sufficient.
\end{remark}

\subsection{Main terms} \label{mainterms}
The main terms come from the diagonal $m=n$ in $\mathcal{D}_1$ and $\mathcal{D}_2$. This gives
\begin{equation} \label{premain}
\mathcal{M}(f,p):=2 \psi(p) \sum_{\substack{n \\ (n,p)=1 }} \frac{a(n)^2}{n} W \Big( \frac{n^2}{4p^2} \Big).
\end{equation}
Using \eqref{iwaniec} and \eqref{decaybd}, we can write 
\begin{equation} \label{mainterminit}
\mathcal{M}(f,p):=2 \psi(p) \sum_{n} \frac{a(n)^2}{n} W \Big( \frac{n^2}{4p^2} \Big)+O(p^{\frac{1}{2}+\varepsilon} ).
\end{equation}
Using Mellin inversion we obtain 
\begin{equation} \label{intmain}
\mathcal{M}(f,p):=2 \frac{\psi(p)}{2 \pi i} \int_{(3)}  4^s  L(1+2s,f \times \overline{f}) \hspace{0.1cm} p^{2s} \hspace{0.1cm} \widehat{W}(s) ds +O(p^{\frac{1}{2}+\varepsilon})
\end{equation}
where
\begin{equation*}
\widehat{W}(s)=\frac{1}{(2 \pi )^{2s}} \frac{\Gamma(\frac{k}{2}+s)^2}{\Gamma(\frac{k}{2})^2} \frac{1}{s}.
\end{equation*}
We shift the contour in \eqref{intmain} to $\Re s=-\frac{1}{4}+\varepsilon$ and pick up a double pole at $s=0$. Recalling \eqref{rankin} and applying the residue theorem, we see that 
\begin{equation} \label{Mintermed}
\mathcal{M}(f,p)=2 \psi(p) \lim_{s \rightarrow 0} \hspace{0.1cm} \frac{d}{ds} \Big ( \int_{\Gamma_0(4)  \backslash \mathbb{H}} y^k |f(\tau)|^2 h(\tau,s)  d \mu(\tau)  \Big )+O(p^{\frac{1}{2}+\varepsilon}),
\end{equation}
where
\begin{equation*}
h(\tau,s):=s^2  \frac{(4 \pi)^{2s+k}}{\Gamma(2s+k)}  \widehat{W}(s) 4^s p^{2s} E_{\infty}(\tau,1+2s). 
\end{equation*}
We interchange the derivative and the integral by \cite[Lemma~1.1, pg.~409]{La2} and \cite[Theorem~2, pg.~130]{B}. We then interchange the limit and integral using uniform convergence. Thus, for each fixed $\tau=x+iy \in \mathbb{H}$, it suffices to compute
\begin{equation} \label{task}
y^k |f(\tau)|^2 \lim_{s \rightarrow 0} \frac{d}{ds} h(\tau,s).
\end{equation}
Recalling \eqref{kronecker1} we have
\begin{equation} \label{expansion}
sE_{\infty}(\tau,1+2s)=\frac{1}{2} \frac{1}{4^{1+2s}} \frac{1}{L(2+4s,\chi_0)} \Big(\frac{\pi}{4} +\Big ( \pi \Big(\gamma- \frac{5}{2} \log 2 \Big)+2 \pi \log \Big( y^{-\frac{1}{4}} \Big | \frac{\eta(2 \tau)}{\eta(4 \tau)^2}  \Big | \Big) \Big ) s+O_{\tau}(s^2) \Big).
\end{equation}
Thus  
\begin{align} 
h(\tau,s)&=\frac{(4 \pi)^k \Gamma(s+\frac{k}{2})^2 p^{2s}}{8 \Gamma(\frac{k}{2})^2 \Gamma(2s+k)} \frac{1}{L(2+4s,\chi_0)} \nonumber \\
& \times \Big(\frac{\pi}{4} +\Big ( \pi \Big(\gamma- \frac{5}{2} \log 2 \Big)+2 \pi \log \Big( y^{-\frac{1}{4}} \Big | \frac{\eta(2 \tau)}{\eta(4 \tau)^2}  \Big | \Big) \Big ) s+O_{\tau}(s^2) \Big).
\label{hexp}
\end{align}
Performing \eqref{task} using \eqref{hexp} and the product rule, and substituting the result into \eqref{Mintermed},
we obtain the constants 
\begin{align}
c_1(f)&:= \frac{(4 \pi)^k}{\pi \Gamma(k)} \int_{\Gamma_0(4) \backslash \mathbb{H}} y^k |f(\tau)|^2 d \mu(\tau), \label{c1comp} \\
c_2(f)&:=\frac{(4 \pi)^{k+1}}{\pi^2 \Gamma(k)} \int_{\Gamma_0(4) \backslash \mathbb{H}} y^k |f(\tau)|^2  \log \Big( y^{-\frac{1}{4}} \Big | \frac{\eta(2 \tau)}{\eta(4 \tau)^2} \Big | \Big) d \mu(\tau) \nonumber \\
 &+ \frac{(4 \pi)^{k-1}}{\Gamma(k)} \Big(- \frac{8 \log 2}{3} +\frac{4 \Gamma^{\prime} (\frac{k}{2}) }{\Gamma(\frac{k}{2})}-\frac{4\Gamma^{\prime} (k) }{\Gamma(k)} -\frac{48 \zeta^{\prime}(2)}{\pi^2}+ 8 \Big(\gamma-\frac{5}{2} \log 2 \Big)    \Big) \nonumber \\
 & \times  \int_{\Gamma_0(4) \backslash \mathbb{H}} y^k |f(\tau)|^2 d \mu(\tau), \label{c2comp}
\end{align}
in Theorem \ref{secondmoment}.

\subsection{Error terms from $\mathcal{D}_1$, $\mathcal{D}_2$ and $\mathcal{E}$} \label{errorarg}
Let $V_{1,2}:(0,\infty) \rightarrow \mathbb{R}_{\geq 0}$ be smooth functions compactly supported on $[1,2]$ that satisfy 
\begin{equation} \label{smoothderiv}
V^{(j)}_{1,2}(x) \ll_{j} (\log 5p)^{2j} \ll_{j,\varepsilon} p^{\varepsilon}.
\end{equation}
In \eqref{D1}--\eqref{E} we place a smooth partition of unity, perform Mellin inversion and truncate the resulting integrals as in \cite[Section~5]{BM1}. We localise the variables to $M \leq m \leq 2M$ and $N \leq n \leq 2N$ satisfying  
\begin{equation} \label{dyadiccond}
N \geq M \geq 1 \quad (\text{by symmetry}) \quad \text{and} \quad \quad 1 \leq MN \leq p^{2+\varepsilon}.
\end{equation}
We find it is sufficient to bound $O(\log^2 p)$ sums of the shape
\begin{equation} \label{suff1}
\mathcal{S}_{N,M,p,d}:= \frac{d}{(MN)^{\frac{1}{2}}} \sum_{\substack{m \equiv n \hspace{-0.2cm} \pmod{d} \\ m \neq n  \\ (mn,p)=1}} a(m) a(n) V_1 \Big( \frac{m}{M} \Big) V_2 \Big( \frac{n}{N} \Big),
\end{equation}
and 
\begin{equation} \label{suff2}
\widetilde{\mathcal{S}}_{N,M,p,d}:= \frac{d}{(MN)^{\frac{1}{2}}} \sum_{\substack{m \equiv n \hspace{-0.2cm} \pmod{d} \\ m \neq n  \\ (mn,p)=1}} a(m) \widetilde{\psi}_p(m) a(n) \widetilde{\psi}_p(n) V_1 \Big( \frac{m}{M} \Big) V_2 \Big( \frac{n}{N} \Big),
\end{equation}
for each $d=1$ or $p$. Using \eqref{ram} we have the bound 
\begin{equation} \label{trivram}
\mathcal{S}_{N,M,p,d} ,\widetilde{\mathcal{S}}_{N,M,p,d} \ll (MN)^{\frac{1}{2}+\varepsilon}.
\end{equation}
We will not make use of \eqref{trivram}. We have the weaker, but unconditional version in the next lemma.

\begin{lemma} \label{ramsub}
For all $N \geq 20M$ we have
\begin{equation*}
\mathcal{S}_{N,M,p,d}, \widetilde{\mathcal{S}}_{N,M,p,d} \ll M^{\frac{1}{2}+\varepsilon} N^{\frac{3}{4}+\varepsilon}.
\end{equation*}
\end{lemma}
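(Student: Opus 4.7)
The strategy is to dominate by absolute values and then combine the $L^2$ mean \eqref{L2} on the $m$-variable with Iwaniec's pointwise bound \eqref{iwaniec} on the $n$-variable, together with a trivial count of admissible residues. Since $|\widetilde{\psi}_p| \leq 1$, the character drops out once we pass to absolute values, so $\mathcal{S}_{N,M,p,d}$ and $\widetilde{\mathcal{S}}_{N,M,p,d}$ can be handled by the same upper bound.

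The hypothesis $N \geq 20M$ implies that the supports of $V_1(m/M)$ and $V_2(n/N)$ are disjoint, so $m \neq n$ is automatic. Discarding the coprimality $(mn,p)=1$ and using the triangle inequality, one arrives at
\begin{equation*}
|\mathcal{S}_{N,M,p,d}|,\ |\widetilde{\mathcal{S}}_{N,M,p,d}| \leq \frac{d}{(MN)^{1/2}} \sum_m |a(m)| V_1\!\left(\frac{m}{M}\right) \sum_{\substack{n \\ n \equiv m \hspace{-0.15cm} \pmod d}} |a(n)| V_2\!\left(\frac{n}{N}\right).
\end{equation*}
For the $m$-sum, Cauchy--Schwarz followed by \eqref{L2} gives $\sum_m |a(m)| V_1(m/M) \leq M^{1/2}\bigl(\sum_m a(m)^2 V_1(m/M)\bigr)^{1/2} \ll_f M^{1+\varepsilon}$. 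For the inner $n$-sum, at fixed $m$ the number of $n \in [N,2N]$ with $n \equiv m \pmod d$ is at most $1 + N/d$, and each coefficient is $\ll N^{1/4+\varepsilon}$ by \eqref{iwaniec}. Assembling these estimates yields
\begin{equation*}
|\mathcal{S}_{N,M,p,d}| \ll \frac{d}{(MN)^{1/2}} \cdot M^{1+\varepsilon} \cdot N^{1/4+\varepsilon}\Bigl(1 + \frac{N}{d}\Bigr) = M^{1/2+\varepsilon} N^{-1/4+\varepsilon}(d+N).
\end{equation*}

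For $d=1$ the factor $d+N \leq 2N$ delivers the claim. For $d=p$, either the sum is empty (in which case there is nothing to prove), or there exist $m \in [M,2M]$ and $n \in [N,2N]$ with $m \neq n$ and $m \equiv n \pmod p$, in which case $p \leq |n-m| \leq 2N$, so $d + N \leq 3N$ and the bound $\ll M^{1/2+\varepsilon} N^{3/4+\varepsilon}$ again follows. The argument is routine and presents no real technical obstacle; the content of the lemma is the observation that replacing the trivial use of \eqref{iwaniec} on the $m$-variable by the $L^2$ mean \eqref{L2} gains a factor of $M^{1/2}$ over the worst case and produces exactly the claimed exponent $N^{3/4}$ on the remaining side.
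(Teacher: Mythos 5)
Your proof is correct and follows essentially the same route as the paper: pass to absolute values, bound the residue-restricted $n$-sum by counting terms and applying Iwaniec's pointwise bound \eqref{iwaniec}, and handle the $m$-sum via Cauchy--Schwarz and the mean-square estimate \eqref{L2}. The paper collapses the $d=1$ and $d=p$ cases into the single estimate that the inner $n$-sum is $\ll N^{5/4+\varepsilon}/d$ (vacuously true when $p>2N$), while you spell out the $d=p$ dichotomy explicitly; the content is the same.
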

\begin{proof}
We have 
\begin{equation*}
\mathcal{S}_{N,M,p,d},  \widetilde{\mathcal{S}}_{N,M,p,d}  \ll  \frac{d}{(MN)^{\frac{1}{2}}} \sum_{\substack{M \leq m \leq 2M \\ (m,p)=1}} |a(m)| \sum_{\substack{n \equiv m \hspace{-0.1cm} \pmod{d} \\ N \leq n \leq 2N \\ (n,p)=1}} |a(n)|. 
\end{equation*}
Observe that if $d=p$ and $N \leq p/2$, then the congruence condition $m \equiv n \pmod{d}$ implies that $m=n$. However, this is 
not possible since $M \leq m \leq 2M$, $N \leq n \leq 2N$, and $N \geq 20M$. 
In all other cases we have $\# \{ N \leq n \leq 2N: n \equiv 0 \pmod{d} \} \asymp N/d$, 
and so
\begin{equation*}
\mathcal{S}_{N,M,p,d},  \widetilde{\mathcal{S}}_{N,M,p,d}
 \ll \frac{d}{(MN)^{\frac{1}{2}}} \frac{N^{\frac{5}{4}}}{d} \sum_{M \leq m \leq 2M} |a(m)|,
\end{equation*}
where the last inequality follows from positivity and \eqref{iwaniec}. The Lemma now follows by applying Cauchy--Schwarz and then \eqref{L2}.
\end{proof}

\begin{remark}
Instead of \eqref{iwaniec}, we could have applied the Conrey--Iwaniec bound \eqref{CIbound} in the proof of Lemma
\ref{ramsub} to obtain an improved bound. Optimality is not the purpose of this paper and we prefer to show that our method works (i.e. obtaining a power saving error term
in Theorem \ref{secondmoment}) with weaker inputs.
\end{remark}

We now remove the greatest common divisor condition in \eqref{suff1}. The removal of the gcd condition in \eqref{suff2} is automatic because of the presence of the character. 
\begin{lemma} \label{trivlem}
For all $M,N$ satisfying \eqref{dyadiccond} and $d=1$ or $p$ we have 
\begin{equation} \label{triv1}
\mathcal{S}_{N,M,p,d}:= \frac{d}{(MN)^{\frac{1}{2}}} \sum_{\substack{m \equiv n \hspace{-0.2cm} \pmod{d} \\ m \neq n}} a(m) a(n) V_1 \Big( \frac{m}{M} \Big) V_2 \Big( \frac{n}{N} \Big)+O(p^{\frac{1}{2}+\varepsilon}).
\end{equation}
\end{lemma}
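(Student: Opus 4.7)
The plan is to bound the terms added when the coprimality condition $(mn,p)=1$ is dropped, showing they contribute at most $O(p^{1/2+\varepsilon})$ in both cases $d=1$ and $d=p$. For $d=p$, the congruence $m \equiv n \pmod p$ forces $p\mid m$ if and only if $p\mid n$, so the added terms are supported on pairs with both $p\mid m$ and $p\mid n$. This support is empty unless $M, N \geq p$, in which case the dyadic constraint $MN \leq p^{2+\varepsilon}$ forces $M, N \leq p^{1+\varepsilon}$. After the substitution $m=pm'$, $n=pn'$, each new variable ranges over only $O(p^{\varepsilon})$ integers. Applying the Iwaniec pointwise bound \eqref{iwaniec} gives $|a(pm')a(pn')| \ll (MN)^{1/4+\varepsilon} \ll p^{1/2+\varepsilon}$, and the normalising prefactor $p/(MN)^{1/2}$ is at most $1$ since $(MN)^{1/2} \geq p$ in this regime. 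Multiplying these contributions yields the required $O(p^{1/2+\varepsilon})$.

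For $d=1$, I would use inclusion-exclusion to split the added sum as $A_1 + A_2 - A_3$ according to whether $p\mid m$, $p\mid n$, or both. To treat $A_1$, I detect the divisibility via the identity $\mathbf{1}_{p\mid m} = p^{-1} \sum_{r \pmod p} e(rm/p)$ and then apply the Wilton bound \eqref{wilton} (in smoothed form via partial summation) uniformly in $r \pmod p$, obtaining $\sum_{p\mid m} a(m) V_1(m/M) \ll M^{1/2+\varepsilon}$. Combined with Wilton applied to the unrestricted $n$-sum, the off-diagonal product divided by $(MN)^{1/2}$ is $\ll p^{\varepsilon}$. The diagonal $m=n$ produced by this factorisation is nonempty only when $M \asymp N$ with $M, N \leq p^{1+\varepsilon/2}$, and \eqref{iwaniec} bounds it by $O(p^{-1/2+\varepsilon})$. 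The term $A_2$ is symmetric, and $A_3$ is subsumed by the $d=p$ analysis (with an additional saving from the absence of the outer factor of $p$).

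The main obstacle is the $d=p$ case: a naive application of Wilton's bound to both inner sums, combined with the outer factor $p/(MN)^{1/2}$, would yield only $O(p^{1+\varepsilon})$, which is insufficient. The saving comes from the structural observation that the divisibility conditions together with $MN \leq p^{2+\varepsilon}$ pin $M$ and $N$ into the short window $[p, p^{1+\varepsilon}]$, so that the Iwaniec subconvex bound \eqref{iwaniec}, applied pointwise over a very thin set of at most $O(p^{\varepsilon})$ values of each variable, is efficient. Notably, no appeal to the Ramanujan--Petersson bound \eqref{ram} is required.
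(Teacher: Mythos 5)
Your proof is correct and proceeds in essentially the same spirit as the paper's: the divisibility forced on $m$ and/or $n$, combined with $M \leq N$ and $MN \leq p^{2+\varepsilon}$, pins the added terms into a short window where the Iwaniec subconvexity bound \eqref{iwaniec} controls everything. The paper's own proof simply case-splits on $M < p/2$ versus $M \geq p/2$ and estimates with \eqref{iwaniec} (and \eqref{L2}) directly, while your appeal to Wilton \eqref{wilton} via additive characters in the $d=1$ case is an overpowered but valid alternative; the only small inaccuracy is that the $d=p$ support is nonempty once $2M \geq p$, i.e.\ $M \geq p/2$, rather than only for $M \geq p$, though this shifts constants without affecting the conclusion.
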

\begin{proof}
We prove \eqref{triv1} by estimating the contribution from pairs in the set
\begin{equation*}
\mathcal{B}:=\big \{(m,n): m \equiv 0 \pmod{p} \quad \text{or} \quad n \equiv 0 \pmod{p}  \quad \text{such that} \quad m \neq n \big \}.
\end{equation*}
If $d=1$ and $1 \leq M \leq (p-1)/2$, then there are no pairs with $m \equiv 0 \pmod{p}$. Pairs $(m,n)$ with $n \equiv 0 \pmod{p}$ contribute $O(p^{\frac{1}{2}+\varepsilon})$ by \eqref{iwaniec}, Cauchy-Schwarz (on the $m$ sum) and \eqref{L2}. 

If $1 \leq M \leq (p-1)/2$ and $d=p$ then there are no $(m,n)$ such that $m \equiv n \equiv 0 \pmod{p}$.

Now suppose that $(p-1)/2 \leq M \leq p^{1+\frac{\varepsilon}{2}}$ and $d=1$. The contribution from all $(m,n)$ with $m \equiv 0 \pmod{p}$ and $n \not \equiv 0 \pmod{p}$ is $O(p^{\frac{1}{2}+\varepsilon})$ by a similar argument to the above. The contribution from all $(m,n)$ with $n \equiv 0 \pmod{p}$ and $m \not \equiv 0 \pmod{p}$ is at most $O(p^{\frac{1}{2}+\varepsilon})$ by a similar argument to the above. The contribution from $(m,n)$ with $m \equiv n \equiv 0 \pmod{p}$ is negligible by 
\eqref{iwaniec}.

If $(p-1)/2 \leq M \leq p^{1+\frac{\varepsilon}{2}}$ and $d=p$ then the contribution from $(m,n)$ with $m \equiv n \equiv 0 \pmod{p}$ is
$O(p^{\frac{1}{2}+\varepsilon})$ by \eqref{iwaniec}.
\end{proof}

\begin{lemma}  \label{plain1}
For all $M,N$ satisfying \eqref{dyadiccond} we have 
\begin{equation*}
\mathcal{S}_{N,M,p,1} \ll_{\varepsilon} p^{\frac{1}{2}+\varepsilon}.
\end{equation*}
\end{lemma}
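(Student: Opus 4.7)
The plan is to exploit the fact that $d=1$ fully decouples the two summation variables, reducing $\mathcal{S}_{N,M,p,1}$ to a product of two one-variable smoothed sums of Fourier coefficients controlled by the Wilton-type estimate \eqref{wilton}. First I would apply Lemma \ref{trivlem} to drop the condition $(mn,p)=1$, which introduces the acceptable error $O(p^{\frac{1}{2}+\varepsilon})$ and will in fact end up dominating the final bound. After doing so, I would detach the diagonal $m=n$ by writing
\[
\sum_{m\neq n} a(m) a(n) V_1\!\Big(\tfrac{m}{M}\Big) V_2\!\Big(\tfrac{n}{N}\Big)
=\Bigg(\sum_m a(m) V_1\!\Big(\tfrac{m}{M}\Big)\Bigg)\!\Bigg(\sum_n a(n) V_2\!\Big(\tfrac{n}{N}\Big)\Bigg)
-\sum_n a(n)^2 V_1\!\Big(\tfrac{n}{M}\Big) V_2\!\Big(\tfrac{n}{N}\Big).
\]

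Each of the two single-variable factors is handled by the Wilton-type estimate \eqref{wilton} specialised to $\alpha=0$ together with Abel summation against $V_1'$ or $V_2'$; the derivative bound \eqref{smoothderiv} only costs an extra $p^{\varepsilon}$. This produces $O(M^{\frac{1}{2}+\varepsilon})$ and $O(N^{\frac{1}{2}+\varepsilon})$ for the two factors respectively, so their product contributes at most $(MN)^{\frac{1}{2}+\varepsilon}/(MN)^{\frac{1}{2}} \ll (MN)^{\varepsilon} \ll p^{\varepsilon}$ after invoking $MN \leq p^{2+\varepsilon}$ from \eqref{dyadiccond}.

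For the diagonal correction, the supports of $V_1(\cdot/M)$ and $V_2(\cdot/N)$ can only overlap when $M \asymp N$, and in that regime the mean square bound \eqref{L2} yields $\sum_n |a(n)|^2 V_1(n/M) V_2(n/N) \ll N \log N$. After dividing by $(MN)^{\frac{1}{2}} \asymp N$, this is again absorbed into $p^{\varepsilon}$. Combining the three contributions gives $\mathcal{S}_{N,M,p,1} \ll p^{\varepsilon} + p^{\frac{1}{2}+\varepsilon} \ll p^{\frac{1}{2}+\varepsilon}$, with the $p^{\frac{1}{2}+\varepsilon}$ coming entirely from the cost of removing the $(mn,p)=1$ condition. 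I do not anticipate any real obstacle: because $d=1$ imposes no modular constraint linking $m$ and $n$, the argument avoids the shifted-convolution analysis, Jutila circle method, and spectral input needed in the $d=p$ case, and reduces to a handful of elementary single-variable estimates for the cusp form $f$.
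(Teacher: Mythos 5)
Your proposal matches the paper's proof essentially step for step: remove the $(mn,p)=1$ condition via Lemma~\ref{trivlem}, decouple the off-diagonal sum into a product of single-variable smoothed sums minus the diagonal, bound the product via the Wilton-type estimate \eqref{wilton} with partial summation, and bound the diagonal via the mean square bound \eqref{L2}. This is precisely the argument given for Lemma~\ref{plain1} in the paper, including the observation that the $O(p^{1/2+\varepsilon})$ loss from the gcd removal dominates.
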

\begin{proof}
We write \eqref{triv1} as 
\begin{align} 
\mathcal{S}_{N,M,p,1}&= \frac{1}{(MN)^{\frac{1}{2}}}  \Big ( \sum_{m} a(m) V_1 \Big( \frac{m}{M} \Big) \Big ) \Big (\sum_n a(n) V_2 \Big( \frac{n}{N} \Big) \Big ) \nonumber \\
&-\frac{1}{(MN)^{\frac{1}{2}}} \sum_{m} a(m)^2 V_1 \Big(\frac{m}{M} \Big)  V_2 \Big( \frac{m}{N} \Big)+O(p^{\frac{1}{2}+\varepsilon}). \label{easy}
\end{align}
Applying partial summation, \eqref{L2} and \eqref{wilton} guarantee that all but the last term in \eqref{easy} are $O(p^{\varepsilon})$.
\end{proof}

\begin{lemma} \label{kiraltwist}
Let $f \in \mathcal{S}_k(4)$ have normalised coefficients $a(n)$. Let $\chi$ be a primitive character modulo $p$ and $V:(0,\infty) \rightarrow \mathbb{R}_{\geq 0}$  be a smooth function with support contained in $[1,2]$. Then for $X \geq 1$ we have 
\begin{equation*}
\sum_m a(m) \chi(m) V \Big( \frac{m}{X} \Big) \ll_{\varepsilon}  X^{\frac{1}{2}} p^{\frac{3}{8}+\frac{\theta}{4}+\varepsilon}+X^{\frac{3}{8}} p^{\frac{1}{2}+\varepsilon},
\end{equation*}
where $\theta$ represents the best progress toward the Ramanujan--Petersson conjecture for weight zero Maass forms.
\end{lemma}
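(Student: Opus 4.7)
The strategy is to adapt the Voronoi--summation approach of Kiral \cite{K}, which gave the subconvex bound \eqref{kiral}, to the smooth partial sum in the statement. The two terms in the bound will arise from two complementary estimates of a dualised expression.

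The first step is to open $\chi$ using Gauss sums, converting $\chi(m)$ into a weighted average of additive twists $e(um/p)$ with $u$ primitive modulo $p$. Since Voronoi summation on $\Gamma_0(4)$ (Lemma \ref{Voronoi}) requires the cusp to have denominator divisible by $4$, I would simultaneously detect the residue of $m$ modulo $4$ using additive characters mod $4$ (here the Kohnen--plus--space condition guarantees $a(m)=0$ unless $m \equiv 0,1 \pmod{4}$, simplifying the bookkeeping), and then combine via CRT with the twist mod $p$ to obtain a single additive twist by $w/(4p)$ with $\gcd(w,4p)=1$. Applying Lemma \ref{Voronoi} to the resulting inner sum then produces a dual sum of length $\asymp p^2/X$ against the coefficients $a(n)$, in which the theta multipliers, averaged over $w$, combine with the Gauss sum $\mathcal{G}_{\overline{\chi}}(1;p)$ to form a Sali\'{e}--type sum. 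Using the closed--form evaluation \eqref{salieval} converts this Sali\'{e} sum into an exponential $e(2\sqrt{nm}/p)$.

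The first term $X^{1/2} p^{3/8+\theta/4+\varepsilon}$ then comes from a spectral large--sieve analysis of the resulting bilinear form, following Kiral's original argument and using the Kim--Sarnak exponent $\theta=7/64$ toward the Ramanujan--Petersson conjecture for weight zero Maass forms; this estimate is sharp when $X$ is of order $p$ and reproduces \eqref{kiral} in that regime. The second term $X^{3/8} p^{1/2+\varepsilon}$ is obtained by estimating the same dual sum more elementarily, combining Cauchy--Schwarz with \eqref{L2}, the bound \eqref{iwaniec} on individual coefficients, and Weyl/Van der Corput differencing applied to the oscillatory factor $e(2\sqrt{nm}/p)$ (whose second derivative in $n$ is of useful size when $n \asymp p^{2}/X$). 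This estimate becomes favourable when $X$ is large, and together with the spectral bound it covers the full range $1 \leq X \leq p^{1+\varepsilon}$.

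The main obstacle is the careful bookkeeping of Gauss sums, theta multipliers and Sali\'{e} sums across the Voronoi step, in particular reconciling the level $p$ of the additive twist with the level $4$ demanded by Lemma \ref{Voronoi}; one has to verify that the half--integrality is fully exploited so that a sum of size $\sqrt{p}$ (rather than $p$) emerges from the $w$--average. A secondary difficulty is the optimisation between the two estimates, which must interpolate cleanly to deliver the stated two--term bound uniformly in $X$.
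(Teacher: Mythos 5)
The paper's proof of Lemma~\ref{kiraltwist} is short and borrows wholesale from Kiral \cite{K}: it amplifies the sum by a short Dirichlet polynomial over primes $\ell_1,\ell_2\in[L,2L]$, applies Cauchy--Schwarz, and thereby reduces to the three averaged shifted convolution sums $S_1,S_2,S_3$ of \cite[Proposition~3]{K}, for which bounds are already available in \cite[Proposition~4, Theorem~16]{K}. Optimising over the amplifier length $L$ then produces exactly the two terms $X^{1/2}p^{3/8+\theta/4+\varepsilon}$ and $X^{3/8}p^{1/2+\varepsilon}$. Your proposal, by contrast, is a Voronoi-first argument with no amplifier: open $\chi$ into additive twists, dualise, evaluate the resulting Sali\'{e}-type sums, and then try to bound the dual sum either spectrally or by Weyl/Van der Corput.

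There is a real gap in your plan: the amplifier is not a cosmetic convenience, it is what produces the free parameter $L$ whose optimisation generates \emph{both} terms of the stated bound. Balancing $p^{1/2}X^{1/2}L^{-1/2}$ against $L^{1/2}X^{1/4}p^{1/2}$ gives $X^{3/8}p^{1/2}$, and balancing against $L^{1/2}X^{1/2}p^{1/4+\theta/2}$ gives $X^{1/2}p^{3/8+\theta/4}$; neither exponent has a natural interpretation without the $L$-aspect. After Cauchy--Schwarz the amplifier also produces the bilinear shifted convolution structure $\sum_{\ell_1 m - \ell_2 n = h} a(m)a(n)$ which is where the Kuznetsov formula and hence the dependence on $\theta$ (through the Maass spectrum) actually enter. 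A straight Voronoi dualisation gives a linear sum $\sum_n a(n)\,(\text{Sali\'e-type sum})(n)\,\mathring V(\cdot)$, not a bilinear form of the shape a spectral large sieve accepts, so the phrase ``spectral large-sieve analysis of the resulting bilinear form'' does not describe anything available at that point. Likewise the Van der Corput step is unlikely to produce $X^{3/8}p^{1/2}$: after the closed-form evaluation \eqref{salieval} the dual exponential is essentially $e(2v/p)$ with $v$ a square root of $c n$, and the coefficients $a(n)$ are not under pointwise control without \eqref{ram}, so there is no clean oscillatory integral in $n$ to difference. If you want to carry out a Voronoi-based argument you would still need an amplification step (as Kiral does) to create the shifted convolutions; at that point you would in effect be reproving \cite[Propositions~3--4 and Theorem~16]{K}, which the paper simply cites.
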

\begin{proof}
Let $S,S_j$ for $j=1,2,3$ be defined as in \cite[Proposition~3]{K}. Each $S_j$ is an averaged shifted convolution sum, depending on $p,X,f, \ell_1$ and $\ell_2$. Let $L \geq 1$ be a parameter to be chosen later. Taking $\chi^{\prime}$ to be $\chi$ in \cite[(11)]{K} yields 
\begin{equation} \label{adaptineq}
\frac{L^2}{(\log L)^2} \Big | \sum_{m} a(m) \chi(m) V \Big(\frac{m}{X} \Big)  \Big |^2  \leq \phi(p) \sum_{\substack{L \leq \ell_1,\ell_2 \leq 2L \\ \ell_j \text{ prime} }} \chi(\ell_1) \overline{\chi(\ell_2)}(S_1+S_2+S_3).
\end{equation}
Note the square is missing in \cite[(11)]{K}. Invoking the bounds for the $S_j$ given in \cite[Proposition~4 and Theorem~16]{K}, the right side of \eqref{adaptineq} is bounded by
\begin{equation*}
\ll p (XL+X^{\frac{1}{2}} L^3+X^{1+\varepsilon} L^{3+\varepsilon} p^{\theta-\frac{1}{2}}  ).
\end{equation*}
Note that the statement of \cite[Theorem 16]{K} has a typographic error (inequality is missing a $Q^{\theta}$). However, the correct bound is stated at the end of proof (cf. \cite[pg.~713]{K}) and that is the one used here. Thus 
\begin{equation*}
\Big | \sum_m a(m) \chi(m) V \Big( \frac{m}{X} \Big) \Big | \ll L^{\varepsilon} \Big( \frac{p^{\frac{1}{2}} X^{\frac{1}{2}}}{L^{\frac{1}{2}}}+L^{\frac{1}{2}} ( X^{\frac{1}{4}} p^{\frac{1}{2}} +X^{\frac{1}{2}} p^{\frac{1}{4}+\frac{\theta}{2}})  \Big).
\end{equation*}
Choosing 
\begin{equation*}
L:=\frac{p^{\frac{1}{2}} X^{\frac{1}{2}}  }{X^{\frac{1}{4}} p^{\frac{1}{2}}+X^{\frac{1}{2}} p^{\frac{1}{4} +\frac{\theta}{2}} } \geq 1,
\end{equation*}
yields the result.
\end{proof}

\begin{lemma} \label{tilde1}
For $M,N$ satisfying \eqref{dyadiccond} we have 
\begin{equation*}
\widetilde{\mathcal{S}}_{N,M,p,1} \ll_{\varepsilon} p^{\varepsilon} ( (MN)^{\frac{1}{4}} p^{\frac{3}{8}+\frac{\theta}{4}}+N^{\frac{1}{4}} M^{\frac{3}{16}} p^{\frac{7}{16}+\frac{\theta}{8}}+(MN)^{\frac{3}{16}} p^{\frac{1}{2}} ).
\end{equation*}
\end{lemma}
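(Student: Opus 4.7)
The plan is to factorise $\widetilde{\mathcal{S}}_{N,M,p,1}$ as a product of two single-variable sums twisted by $\widetilde{\psi}_p$, and then bound each factor by interpolating between a trivial $L^2$ estimate and Lemma \ref{kiraltwist}.

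First, I would observe that the condition $(mn,p)=1$ in \eqref{suff2} is automatic, since $\widetilde{\psi}_p(m)\widetilde{\psi}_p(n)=0$ whenever $p\mid mn$. Setting
$$
A:=\sum_{m} a(m)\widetilde{\psi}_p(m)\, V_1\!\left(\tfrac{m}{M}\right),\qquad B:=\sum_{n} a(n)\widetilde{\psi}_p(n)\, V_2\!\left(\tfrac{n}{N}\right),
$$
I would then remove the constraint $m\neq n$. The diagonal $m=n$ contributes $\sum_m a(m)^2 V_1(m/M)V_2(m/N)$; since the supports of $V_1(\cdot/M)$ and $V_2(\cdot/N)$ overlap only when $M\asymp N$, this is bounded by $O(M\log M)$ via \eqref{L2}. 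After division by $\sqrt{MN}\asymp M$, its contribution is only $O(p^{\varepsilon})$, so $\widetilde{\mathcal{S}}_{N,M,p,1}=AB/\sqrt{MN}+O(p^{\varepsilon})$ and the whole task reduces to estimating $|A|\,|B|$.

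Next I would derive two bounds for $A$ (and analogously for $B$). Cauchy--Schwarz together with \eqref{L2} produces the trivial bound $|A|\ll M\log^{1/2}M$. Lemma \ref{kiraltwist}, applied to the primitive quadratic character $\widetilde{\psi}_p$ modulo $p$, gives $|A|\ll_{\varepsilon} M^{1/2}p^{3/8+\theta/4+\varepsilon}+M^{3/8}p^{1/2+\varepsilon}$. The decisive step is to take the geometric mean $|A|=|A|^{1/2}|A|^{1/2}$, using the trivial bound for one factor and Kiral's bound for the other, and then apply $(a+b)^{1/2}\leq a^{1/2}+b^{1/2}$ to split. This yields
$$
|A|\ll_{\varepsilon} M^{3/4}p^{3/16+\theta/8+\varepsilon}+M^{11/16}p^{1/4+\varepsilon},
$$
and similarly $|B|\ll_{\varepsilon} N^{3/4}p^{3/16+\theta/8+\varepsilon}+N^{11/16}p^{1/4+\varepsilon}$.

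Finally, I would multiply the two estimates, divide by $\sqrt{MN}$, and use the standing hypothesis $N\geq M$ to absorb the smaller cross term $M^{1/4}N^{3/16}p^{7/16+\theta/8+\varepsilon}$ into $N^{1/4}M^{3/16}p^{7/16+\theta/8+\varepsilon}$ (since the ratio is $(N/M)^{1/16}\geq 1$). The resulting three terms are exactly those in the statement. The only conceptual subtlety is recognising that applying Lemma \ref{kiraltwist} directly to both $A$ and $B$ gives a bound of order $p^{3/4+\theta/2+\varepsilon}$ with no savings in $M$ or $N$, which is too weak; only after interpolation with the trivial $L^2$ bound does the crucial $(MN)^{1/4}$ savings emerge in the leading term, making the estimate useful throughout the range $MN\ll p^{2+\varepsilon}$.
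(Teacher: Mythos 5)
Your proposal is correct and follows essentially the same route as the paper: both factorise $\widetilde{\mathcal{S}}_{N,M,p,1}$ into a product of two character-twisted linear sums plus a negligible diagonal, bound each factor by interpolating the trivial $L^2$ bound with Lemma \ref{kiraltwist} (your geometric-mean step is algebraically the same as the paper's use of $\min(A+B,C)\le\sqrt{AC}+\sqrt{BC}$), and then use $N\ge M$ to absorb one of the two cross terms. The only cosmetic difference is that the paper writes the interpolation via the $\min$ inequality rather than as an explicit $|A|^{1/2}|A|^{1/2}$ split.
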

\begin{proof}
We write \eqref{suff2} as 
\begin{align}
\widetilde{\mathcal{S}}_{N,M,p,1}&= \frac{1}{(MN)^{\frac{1}{2}}}  \Big ( \sum_{m} a(m) \widetilde{\psi}_p(m) V_1 \Big( \frac{m}{M} \Big) \Big ) \Big (\sum_n a(n) \widetilde{\psi}_p(n) V_2 \Big( \frac{n}{N} \Big) \Big ) \nonumber \\
&-\frac{1}{(MN)^{\frac{1}{2}}} \sum_{\substack{m \\ (m,p)=1}} a(m)^2 V_1 \Big(\frac{m}{M} \Big) V_2 \Big( \frac{m}{N} \Big).  \label{resuff}
\end{align}
Observe that \eqref{L2} implies that
\begin{equation} \label{L2trivial}
\frac{1}{(MN)^{\frac{1}{2}}}\sum_{\substack{m \\ (m,p)=1}} |a(m)|^2 V_1 \Big(\frac{m}{M} \Big) V_2 \Big( \frac{m}{N} \Big) \ll p^{\varepsilon}.
\end{equation}
By the Cauchy--Schwarz inequality and \eqref{L2} we have 
\begin{equation} \label{ramreplace}
\sum_{y} a(y) \widetilde{\psi}_p(y) V_1 \Big( \frac{y}{X} \Big)  \ll X^{1+\varepsilon}.
\end{equation}
Using Lemma \ref{kiraltwist}, \eqref{ramreplace} and the fact 
\begin{equation*}
\min (A+B,C) \leq \sqrt{AC}+\sqrt{BC} \quad \text{for} \quad A,B,C>0,
\end{equation*}
we obtain 
\begin{align} \label{nbound}
\sum_{y} a(y) \psi_p(y) V_2 \Big( \frac{y}{X} \Big) 
&\ll \min ( X^{\frac{1}{2}} p^{\frac{3}{8}+\frac{\theta}{4}+\varepsilon}+X^{\frac{3}{8}} p^{\frac{1}{2}+\varepsilon}, X^{1+\varepsilon}) \nonumber  \\
& \ll p^{\varepsilon} ( X^{\frac{3}{4}} p^{\frac{3}{16}+\frac{\theta}{8}}+X^{\frac{11}{16}} p^{\frac{1}{4}}).
\end{align}
After applying the triangle inequality in \eqref{resuff}, we insert \eqref{L2trivial} and \eqref{nbound} into \eqref{resuff} to obtain the result.
\end{proof}

We are now left to treat
\begin{equation} \label{keyquant}
\widetilde{\mathcal{S}}_{N,M,p,p}=\mathcal{S}_{N,M,p,p}.
\end{equation}
It will be convenient to consider $M \leq N \leq 20 M$ and $N \geq 20M$ separately. 

\begin{lemma} \label{secondregime}
For all $M,N$ satisfying \eqref{dyadiccond} and $M \leq N \leq 20 M$, we have 
\begin{equation*}
\mathcal{S}_{N,M,p,p} \ll_{\varepsilon} p^{\frac{1}{2}+\theta+\varepsilon}.
\end{equation*}
\end{lemma}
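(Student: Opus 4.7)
The plan is to reduce $\mathcal{S}_{N,M,p,p}$ to a short sum of shifted convolution sums for the form $f$ with shifts divisible by $p$, and then invoke the spectral analysis developed elsewhere in the paper. First I observe that the range of $(M,N)$ for which the lemma has any content is narrow: the conditions $m \equiv n \pmod p$, $m \ne n$, and $|n-m| \le 2N \le 40M$ force $p \le |n-m| \le 40M$, so the sum is empty unless $M \ge p/40$. Combined with $M \le N \le 20M$ and $MN \le p^{2+\varepsilon}$ from \eqref{dyadiccond}, this pins down $p/80 \le M \le N \ll p^{1+\varepsilon/2}$, so that $M \asymp N \asymp p^{1+o(1)}$.

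Writing $n = m + kp$ with $k \in \mathbb{Z} \setminus \{0\}$ and $|k| \le 2N/p \ll p^{\varepsilon/2}$, I then recast
\begin{equation*}
\mathcal{S}_{N,M,p,p} = \frac{p}{(MN)^{1/2}} \sum_{0 < |k| \ll N/p} D_k, \qquad D_k := \sum_{\substack{n-m=kp \\ (mn,p)=1}} a(m)\, a(n)\, V_1(m/M)\, V_2(n/N).
\end{equation*}
The number of admissible shifts $k$ is $\ll p^{\varepsilon}$, while the prefactor $p/(MN)^{1/2}$ is also $\ll p^{\varepsilon}$.

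The heart of the argument is a spectral bound for $D_k$, uniform in $k$. Following the strategy outlined in Section \ref{highlevel}, I would detect the equation $n-m=kp$ by Jutila's variant of the circle method on an appropriate family of auxiliary moduli, and then apply Voronoi summation (Lemma \ref{Voronoi}) independently in both variables $m$ and $n$. The two theta multipliers arising from the opposite-signed transforms collide to produce an even quadratic character twist, effectively reducing the analysis to a weight-zero setting with Kloosterman sums twisted by a quadratic character. The Kuznetsov formula then decomposes the resulting $c$-sum into holomorphic, Maass, and Eisenstein contributions; the Maass piece is controlled by the Blomer--Mili\'cevi\'c style flexible large sieve for Maass forms with divisibility conditions, together with the Kim--Sarnak exponent $\theta$ toward the Ramanujan--Petersson conjecture for weight-zero Maass forms. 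The outcome, which is essentially the content of Proposition \ref{spectralest}, is the bound
\begin{equation*}
D_k \ll (MN)^{1/2} \, p^{\theta-1/2+\varepsilon} \asymp p^{1/2+\theta+\varepsilon},
\end{equation*}
uniform in the shift parameter $k$. Summing over the $O(p^\varepsilon)$ admissible shifts and absorbing the $p^\varepsilon$ prefactor yields the claimed bound.

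The main obstacle is precisely the shifted convolution estimate: achieving the quality $p^{1/2+\theta+\varepsilon}$ in the half-integral weight setting with shift $\asymp p$ requires both the multiplier-collision trick (to escape the theta multiplier and return to a weight-zero problem accessible to Kuznetsov) and the flexibility of the large sieve to accommodate divisibility conditions on the Kuznetsov moduli produced by Voronoi. Everything else in the reduction to $D_k$ is elementary counting.
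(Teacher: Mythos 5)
Your structural reduction is sound: the conditions $m\equiv n\pmod p$, $m\neq n$, $M\leq N\leq 20M$, $MN\leq p^{2+\varepsilon}$ do pin the sum down to the regime $M\asymp N\asymp p^{1+o(1)}$, and writing $n=m+kp$ with $O(p^{\varepsilon})$ admissible values of $k$ reduces the problem to bounding $O(p^{\varepsilon})$ shifted convolution sums $D_k$ with shift $kp$. The crux is then a bound of the shape $D_k\ll (MN)^{1/2}p^{\theta-1/2+\varepsilon}$, and this is where your proposal has a genuine gap.

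You attribute this bound to Proposition \ref{spectralest}, but that proposition cannot be the source. First, it is stated under the hypothesis $N\geq 20M$, which is precisely the complement of the regime treated here. Second, and more importantly, the entire point of Proposition \ref{spectralest} (and the flexible large sieve behind it) is that its conclusion is $\theta$-\emph{free}; it contains no factor $p^{\theta-1/2}$, so it cannot ``essentially be'' a bound of the form you need. Third, even if one blindly substituted $M\asymp N\asymp p$ into the conclusion of Proposition \ref{spectralest}, the dominant term $N^{3/4}M^{1/4}/p^{1/4}$ gives $\mathcal{S}(1,1,p,N,M)\ll p^{3/4+\varepsilon}$ and hence $\mathcal{S}_{N,M,p,p}\ll p^{3/4+\varepsilon}$ via \eqref{simple}, which is weaker than the target $p^{1/2+\theta+\varepsilon}$ since $\theta\leq 7/64<1/4$. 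The unbalanced spectral machinery built for Proposition \ref{spectralest} (Jutila with $C=N^{1000}$, double Voronoi, large sieve with divisibility) is tuned for $N\gg M$ and simply does not deliver the advertised quality at the balanced point $M\asymp N\asymp p$.

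The paper's own proof is much more economical: it invokes Lemma \ref{trivlem} to remove the coprimality condition, and then cites Kiral's pre-existing bounds for the averaged shifted convolution sums $S_2$, $S_3$ from \cite[pg.~713]{K} (the same bounds that feed into Lemma \ref{kiraltwist} here). Those are the bounds that carry the $p^{\theta-1/2}$ factor and that are valid precisely in this balanced regime with shift $\asymp p$. So while the shape of the estimate you want is correct, you need to supply it from Kiral's shifted convolution estimates (or an equivalent balanced-range spectral argument), not from Proposition \ref{spectralest}.
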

\begin{proof}
Lemma \ref{trivlem} and the bounds for $S_2$ and $S_3$ on \cite[pg.~713]{K} together imply that 
\begin{equation} 
\mathcal{S}_{N,M,p,p} \ll_{\varepsilon} p^{\frac{1}{2}+\theta+\varepsilon}.
\end{equation}
\end{proof}

Now consider $N \geq 20M$. The following lemma in conjunction with parts of Lemma \ref{ramsub} and Proposition \ref{spectralest}  will essentially serve as a weaker, but unconditional replacement for the ``trivial bound" in \eqref{trivram}.
\begin{lemma} \label{subby}
Let $\delta_0,\delta_1>0$ and suppose $M,N$ satisfy \eqref{dyadiccond} as well as that $M \ll p^{\frac{1}{2}-\delta_0}$ and $N \gg p^{1+\delta_1}$. Then
\begin{equation*}
\mathcal{S}_{N,M,p,p} \ll_{\varepsilon,\delta_0,\delta_1} p^{\varepsilon} \Big(  p^{\frac{3}{4}} M^{\frac{1}{2}}+\frac{p^{\frac{3}{2}}}{N^{\frac{1}{2}}} \Big).
\end{equation*}
\end{lemma}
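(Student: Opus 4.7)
The plan is to apply half-integral weight Voronoi summation to the inner $n$-sum, whose arithmetic progression condition $n \equiv m \pmod p$ is detected by additive characters modulo $p$. Each nonzero frequency $d$ produces, after splitting $n$ into residues modulo $4$ (so that Lemma~\ref{Voronoi} applies at level $4p$) and using the twisted multiplicativity of Lemma~\ref{twistlem}, a dual sum of length $\asymp p^2/N$ weighted by a Sali\'e sum $S(m,n,p)$; the $d = 0$ frequency contributes only $O(p^\varepsilon)$ by Wilton's bound \eqref{wilton}. Reassembling these pieces yields a representation
\[
\mathcal{S}_{N,M,p,p} \;\approx\; \frac{N^{1/2}}{M^{1/2}\,p}\,\mathcal{T}, \qquad \mathcal{T} := \sum_{m} a(m)\,V_1\!\Big(\tfrac{m}{M}\Big) \sum_{(n,p)=1} a(n)\,S(m,n,p)\,\Phi(n),
\]
with $\Phi$ a Schwartz weight concentrated on $n \asymp p^2/N$. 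Both terms of the bound will be extracted from a Cauchy--Schwarz analysis of $\mathcal{T}$: the $p^{3/2+\varepsilon}/N^{1/2}$ term from the diagonal and the $p^{3/4+\varepsilon}M^{1/2}$ term from the off-diagonal of a bilinear form in the $m$-variable.

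Applying Cauchy--Schwarz in $n$ to $\mathcal{T}$, and using \eqref{L2} to bound $\sum_n |a(n)|^2|\Phi(n)|^2 \ll p^2/N$, it remains to estimate
\[
\Xi(m_1,m_2) \;:=\; \sum_{(n,p)=1} S(m_1,n,p)\,\overline{S(m_2,n,p)}\,|\Phi(n)|^2 .
\]
Inserting the closed-form evaluation \eqref{salieval} and parametrizing solutions of $u^2 \equiv m_1 n$, $v^2 \equiv m_2 n \pmod p$, the compatibility $u^2 m_2 \equiv v^2 m_1 \pmod p$ forces (when $m_2\overline{m_1}$ is a quadratic residue) $v \equiv \pm c u \pmod p$ for $c$ a square root of $m_2\overline{m_1}$ mod $p$. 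Since $p^2/N < p$ under the hypothesis $N \gg p^{1+\delta_1}$, the pair $(u,v)$ determines $n$ uniquely, and $\Xi(m_1,m_2)$ collapses to
\[
p\sum_{\pm}\sum_{u\in \mathbb{F}_p^\times} e\!\Big(\tfrac{2(1\mp c)u}{p}\Big)\,\mathbf{1}_{\,u^2\overline{m_1}\bmod p \,\in\, [p^2/N,\,2p^2/N]}\,|\Phi(\tilde n(u))|^2 .
\]

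For the diagonal $m_1 = m_2$ (so $c = 1$) the branch $v = u$ has trivial phase and counts $\ll p^2/N$ admissible $u$, giving $|\Xi(m,m)| \ll p^3/N$. The remaining branches, including every off-diagonal $m_1 \neq m_2$ (note that $M \ll p^{1/2}$ precludes $m_2 \equiv -m_1 \pmod p$), take the shape $\sum_u e(\alpha u/p)\,\mathbf{1}_{u^2\overline{m_1}\bmod p \in I}$ with $\alpha \in \mathbb{F}_p^\times$ and $|I| \asymp p^2/N$. Fourier-completing the indicator on $\mathbb{Z}/p\mathbb{Z}$ produces a vanishing main term (since $\sum_u e(\alpha u/p) = 0$) and error terms controlled by the uniform Weil bound $|\sum_u e((\alpha u - k\overline{m_1} u^2)/p)| \ll p^{1/2}$ for $k\not\equiv 0\pmod p$, yielding $\ll p^{1/2+\varepsilon}$; hence $|\Xi(m_1,m_2)| \ll p^{3/2+\varepsilon}$ uniformly in $m_1 \neq m_2$. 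Combining diagonal and off-diagonal contributions with $\sum_m |a(m)|^2 \ll M^{1+\varepsilon}$ and $\sum_m |a(m)| \ll M^{1+\varepsilon}$ from \eqref{L2} gives
\[
|\mathcal{T}|^2 \;\ll\; \frac{p^2}{N}\Big(M^{1+\varepsilon}\cdot \tfrac{p^3}{N} + M^{2+\varepsilon}\cdot p^{3/2}\Big),
\]
and rescaling by $N^{1/2}/(M^{1/2}p)$ recovers the claimed bound $p^{3/4+\varepsilon}M^{1/2}+p^{3/2+\varepsilon}/N^{1/2}$.

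The main technical obstacle is carrying out the Voronoi step cleanly: Lemma~\ref{Voronoi} requires $4 \mid c$, so one must promote the modulus from $p$ to $4p$ via a split into residues of $n$ modulo $4$ (eased by the Kohnen support condition $n \equiv 0,1 \pmod 4$), and then apply Lemma~\ref{twistlem} to isolate the Sali\'e factor at $p$ from the tame Kloosterman piece at $4$. Beyond this bookkeeping, the argument rests on two standard ingredients: the Rankin--Selberg bound \eqref{L2} for square-means of $a(n)$ and the uniform quadratic Gauss sum estimate, both applied as above.
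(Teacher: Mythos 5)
Your proposal follows essentially the same route as the paper: Voronoi summation at modulus $4p$ to produce a dual Sali\'e correlation, Cauchy--Schwarz in the dual variable, the closed-form evaluation \eqref{salieval} to parametrize the correlation sum, and then a diagonal/off-diagonal split where the off-diagonal is handled by Fourier-completing an interval indicator to land on quadratic Gauss sums bounded by $p^{1/2}$. The final bookkeeping $\mathcal{S}^2 \ll p^3/N + Mp^{3/2}$ matches the paper's $(p^{3/2+\varepsilon}/N^{1/2}) + M^{1/2}p^{3/4+\varepsilon}$.

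A few small imprecisions that do not affect the substance, but which the paper handles more cleanly: (i) you carry the Hankel weight $|\Phi(\tilde n(u))|^2$ through into the Fourier-completion step, where its $u$-dependence is not an additive character; the paper instead truncates the dual sum to $n\ll p^{2+\varepsilon}/N$ first (legitimate by the Schwartz decay of the Hankel transform) and then completes a bare indicator, avoiding this issue. Your split of the Cauchy--Schwarz factors is also slightly inconsistent as written ($|\Phi|^2$ cannot appear in both factors), though either consistent choice works. (ii) The parenthetical invoking $m_2\equiv -m_1$ is a red herring: the relevant observation is that the phase $2(1\mp c)$ vanishes precisely when $c=\pm1$, i.e.\ $c^2 = m_2\overline{m_1}=1$, i.e.\ $m_1\equiv m_2\pmod p$, which under $M\ll p^{1/2-\delta_0}$ forces $m_1=m_2$; your reference to $-m_1$ corresponds to $c^2=-1$, not to degenerate phases. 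Neither point is a gap---the argument is correct and is the paper's argument.
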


\begin{lemma} \label{valest}
Let $\delta>0$ and suppose that $M,N$ satisfy \eqref{dyadiccond} as well a $M/N<p^{-1-\delta}$. Then we have 
\begin{equation*}
\mathcal{S}_{N,M,p,p} \ll_{\varepsilon, \delta} p^{\varepsilon} \Big( p^{\frac{3}{2}} \Big(\frac{M}{N} \Big)^{\frac{1}{2}}+p^{\frac{1}{2}} \Big).
\end{equation*}
\end{lemma}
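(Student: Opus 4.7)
The plan is to exploit the hypothesis $M/N < p^{-1-\delta}$---which makes the inner $n$-sum very much longer than the $m$-sum---by applying half--integral weight Voronoi summation in $n$, following exactly the heuristic $\eqref{vordis}$ sketched in Section $\ref{highlevel}$. A quick check first: combined with $\eqref{dyadiccond}$, the hypothesis yields $2M < N p^{-\delta} < N$, so the condition $m \neq n$ is automatic, and $M^2 \leq MN \cdot (M/N) \leq p^{1-\delta+\varepsilon}$ gives $M < p$ for $p$ sufficiently large, so $(m,p)=1$ is automatic as well. Writing
\begin{equation*}
\mathcal{S}_{N,M,p,p} \;=\; \frac{p}{(MN)^{1/2}} \sum_m a(m)\, V_1(m/M)\, I(m), \qquad I(m):=\sum_{\substack{n \asymp N \\ n \equiv m \pmod p}} a(n)\, V_2(n/N),
\end{equation*}
it suffices to bound $I(m)$ and then sum against $a(m)$.

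The central step is to transform $I(m)$ by detecting the congruence via additive characters modulo $p$ and applying Lemma $\ref{Voronoi}$. This lemma requires $4 \mid c$, so I would work at level $4p$ and absorb the mod-$4$ mismatch using the Kohnen plus-space vanishing $a(n)=0$ for $n \not\equiv 0,1 \pmod 4$, in the manner of Kiral \cite{K}. After unwinding, one obtains
\begin{equation*}
I(m) \;=\; \frac{1}{p}\sum_n a(n)\, V_2(n/N) \;+\; \frac{N}{p^2}\, \kappa \sum_n a(n)\, S(m,n,p)\, \widetilde V_2\!\left(\frac{nN}{p^2}\right),
\end{equation*}
with $|\kappa|=1$ and $\widetilde V_2$ a smooth function of rapid decay. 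The Sali\'e sum emerges when the detector $e(-am/p)$ is coupled with the dual additive character $e(\overline a n/p)$ produced by Voronoi, combined with the Kronecker symbol inherited from the theta multiplier $\nu_\theta$: a change of variable $a \mapsto -a$ collapses the resulting Gauss-type sum to $\bigl(\tfrac{-1}{p}\bigr) S(m,n,p)$. The $a=0$ contribution is the main-term piece above, bounded by $O(N^{1/2+\varepsilon}/p)$ via Wilton's estimate $\eqref{wilton}$.

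With this representation in hand, the final estimate is obtained by trivial bookkeeping. Insert $|S(m,n,p)| \leq 2 p^{1/2}$, truncate the dual $n$-sum at $n \leq p^{2+\varepsilon}/N$ using the rapid decay of $\widetilde V_2$, and apply Cauchy--Schwarz together with the Rankin--Selberg bound $\eqref{L2}$ to obtain $\sum_{n \lesssim p^2/N} |a(n)| \ll (p^2/N)^{1+\varepsilon}$. This gives $|I(m)| \ll p^{1/2+\varepsilon}$, as both competing pieces are bounded by this quantity (the main-term piece satisfies $N^{1/2+\varepsilon}/p \ll p^{1/2+\varepsilon}$ since $N \leq p^{2+\varepsilon}$ by $\eqref{dyadiccond}$). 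One further application of Cauchy--Schwarz with $\eqref{L2}$ yields $\sum_{m \asymp M} |a(m)| \ll M^{1+\varepsilon}$, whence
\begin{equation*}
\mathcal{S}_{N,M,p,p} \;\ll\; \frac{p}{(MN)^{1/2}} \cdot M^{1+\varepsilon} \cdot p^{1/2+\varepsilon} \;\ll\; p^{3/2+\varepsilon}(M/N)^{1/2},
\end{equation*}
as required. The only real obstacle in the scheme is executing the Voronoi step cleanly---the modulus mismatch between Lemma $\ref{Voronoi}$ and the conductor $p$---but this is handled in the standard way via the plus-space restriction, and no deeper ingredient (spectral theory, subconvexity, or Ramanujan--Petersson) is needed in this regime.
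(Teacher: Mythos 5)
Your proposal is correct and follows essentially the same route as the paper: detect $n\equiv m\pmod p$ with additive characters, apply Lemma~\ref{Voronoi} in the long $n$-variable after passing to modulus $4p$, bound the resulting Sali\'e/Kloosterman sum by $O(p^{1/2})$, and finish trivially with Cauchy--Schwarz and the Rankin--Selberg bound \eqref{L2}. The paper's concrete device for the $4\mid c$ bookkeeping is the orthogonality identity \eqref{orthog1}--\eqref{orthog2} over $j\equiv 1\pmod 4$, $j\pmod{4p}$ (leading to $K_{1,\Upsilon\mathbf{1}_p}(-m,-n;4p)$ rather than $S(m,n,p)$ directly), which you invoke only implicitly via \emph{in the manner of Kiral}, but the mathematical content is the same.
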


\begin{prop} \label{critandupper} 
Let $M,N$ be as in \eqref{dyadiccond} and also satisfy
\begin{equation} \label{vrange}
\quad p^{\frac{3}{2}-\frac{9}{100}} \leq N \leq p^{\frac{3}{2}+\frac{9}{100}}.
 \end{equation}
 Then 
\begin{align*} 
\mathcal{S}_{N,M,p,p} \ll_{\varepsilon} p^{\varepsilon} \Big( & \frac{M^{\frac{1}{2}}}{N^{\frac{1}{2}}} p^{\frac{3}{2}-\frac{1}{108}}+\frac{M^{\frac{1}{2}}}{N^{\frac{1}{4}}} p^{1+\frac{25}{216}}+ \frac{M^{\frac{1}{4}}}{N^{\frac{1}{2}}} p^{\frac{3}{2}+\frac{25}{216}}  \nonumber \\
&+\frac{M^{\frac{1}{4}}}{N^{\frac{1}{4}}} p^{1+\frac{13}{54}}+ \frac{M^{\frac{1}{4}}}{N^{\frac{1}{8}}} p^{1+\frac{23}{432}}+M^{\frac{1}{4}} p^{1-\frac{29}{216}}+p^{1-\frac{1}{10}} \Big). \label{dp}
\end{align*}
\end{prop}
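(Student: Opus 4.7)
The proof follows the blueprint laid out in Section \ref{highlevel}. As a first step, I would detect the congruence $m \equiv n \pmod{p}$ using additive characters modulo $4p$ and apply the half-integral weight Voronoi summation formula (Lemma \ref{Voronoi}) to the inner $n$-sum. In the critical range $p^{141/100} \leq N \leq p^{159/100}$, the dualized $n$-variable has length $p^2/N \in [p^{41/100}, p^{59/100}]$, which fits precisely the hypothesis $p^{2/5} \leq N_{\text{thm}} \leq p^{3/5}$ of Theorem \ref{critrangethm}. The theta multiplier $\nu_\theta$, evaluated at the matrix sending $\infty$ to the cusp $u/p$ on $\Gamma_0(4p)$, collapses to a quadratic symbol on $\mathbb{F}_p$, so the hyper-Kloosterman-type sum that would arise in the integral weight case degenerates into a Salié sum $S(m,n,p)$. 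Up to an acceptable error absorbed into the last term of \eqref{dp}, this yields
\begin{equation*}
\mathcal{S}_{N,M,p,p} \ll \frac{N^{1/2}}{p \, M^{1/2}} \, \Big| \sum_{m \asymp M} a(m) \sum_{n \asymp p^2/N} a(n) \, S(m,n,p) \, \Phi(m,n) \Big| + (\text{error}),
\end{equation*}
where $\Phi$ is a Schwartz weight obtained from $V_2$ via the Bessel transform built into Lemma \ref{Voronoi}.

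Next, I would apply the Cauchy--Schwarz inequality in the $m$-variable, using \eqref{L2} to extract $\big(\sum_m |a(m)|^2\big)^{1/2} \ll M^{1/2 + \varepsilon}$. Opening the square in the dual produces the sum
\begin{equation*}
\sum_{n_1, n_2 \asymp p^2/N} a(n_1) \overline{a(n_2)} \sum_{m \asymp M} S(m, n_1, p) \overline{S(m, n_2, p)},
\end{equation*}
which is exactly the object bounded by Theorem \ref{critrangethm}, once the Fourier coefficients are handled. I would decompose $a(n_1) a(n_2)$ dyadically via the partition $n_1 \in \mathcal{N}_r(f)$, $n_2 \in \mathcal{N}_s(f)$ and consider two regimes at a threshold $2^R$ to be chosen later. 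For pairs with $\max(r,s) > R$ I would use the trivial bound $|S(m,n,p)| \leq 2\sqrt{p}$ pointwise in the $m$-sum, combined with the counting estimate $|\mathcal{N}_r(f) \cap [0,X]| \ll X^{1+\varepsilon}/2^{2r}$ coming from \eqref{L2}; the geometric decay in $r$ and $s$ then produces a contribution proportional to $2^{-R}$. For pairs with $\max(r,s) \leq R$ I would take the sup norm $|a(n_1) a(n_2)| \leq 2^{2R}$, extend the double sum positively to the full dyadic box, and invoke Theorem \ref{critrangethm} with $M_{\text{thm}} = M$ and $N_{\text{thm}} = p^2/N$; this produces a bound scaling as $2^R$ after the square root is taken.

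The final step is to balance the two regimes by choosing $R$ optimally. Each of the six explicit terms on the right of \eqref{critrangestat} leads, after extraction of the square root and the $R$-optimization against the $2^{-R}$ large-index tail, to a distinct term on the right side of \eqref{dp}; the large-index tail alone (via the trivial Salié sum bound and the \eqref{L2} counting estimate) produces the last summand $p^{9/10 + \varepsilon}$. I expect the main obstacle to be twofold: first, executing the half-integral weight Voronoi step carefully enough that $\Phi$ satisfies sharp size and derivative bounds effectively truncating the dual variable at $p^2/N$ (with negligible tail below), since the theta multiplier forces a more delicate analysis than in the integral weight setting; and second, the combinatorial bookkeeping of the $R$-optimization, where each of the seven summands in \eqref{dp} must be obtained by balancing a specific term of \eqref{critrangestat} against the trivial tail, treating each as a separate sub-case. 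Neither difficulty is conceptual, but both are arithmetic-heavy, and the precise exponents $\tfrac{161}{108}, \tfrac{241}{216}, \tfrac{349}{216}, \tfrac{67}{54}, \tfrac{455}{432}, \tfrac{187}{216}, \tfrac{9}{10}$ emerge only after this term-by-term balancing is carried out.
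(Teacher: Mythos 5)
Your high-level strategy --- detect the congruence with additive characters modulo $4p$, apply Lemma~\ref{Voronoi} in the long variable, Cauchy--Schwarz in $m$ using \eqref{L2}, dyadic decomposition of the Fourier coefficients via $\mathcal{N}_r(f)$, and balancing Theorem~\ref{critrangethm} against a trivial count --- is essentially the paper's argument, and your term-by-term expectation is correct: each of the six explicit summands in \eqref{critrangestat} produces exactly one of the first six summands in \eqref{dp} after the balancing. A small procedural difference: the paper applies the dyadic partition to the $n$-sum \emph{before} Cauchy--Schwarz (see \eqref{penuluv}), so that $n_1$ and $n_2$ share a single index $v$; your two-index variant $(r,s)$ after opening the square works but is slightly less clean, since the balancing parameter is then $\max(r,s)$ rather than one genuine dyadic scale. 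The paper also routes the modulo-$4$ bookkeeping through the twisted multiplicativity in Lemma~\ref{twistlem} (cf.\ \eqref{id} and Remark~\ref{mod4}); you mention the collapse to Sali\'e sums but do not address how the level-$4$ factor is peeled off before Theorem~\ref{critrangethm} can be invoked.

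There is, however, a genuine gap concerning the term $p^{\frac{9}{10}+\varepsilon}$. You assert the dual variable is ``effectively truncat[ed] ... at $p^2/N$ (with negligible tail below).'' This is false: the Hankel transform $\mathring V_2$ decays rapidly for $n \gg p^{2+\varepsilon}/N$, but for $n \ll p^2/N$ it is merely bounded, not small. Consequently the low range $1 \leq n \leq p^{2/5}$ cannot be discarded for free --- and this is precisely the range \emph{below} the admissibility threshold $N_{\mathrm{thm}} \geq p^{2/5}$ of Theorem~\ref{critrangethm}. The paper removes it by a separate trivial estimate (Weil bound $|K_{1,\Upsilon\mathbf 1_p}|\leq 4\sqrt p$ together with \eqref{L2}), and it is exactly this step that produces the free-standing term $O(p^{\frac{9}{10}+\varepsilon})$ in \eqref{dp}. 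You instead attribute $p^{9/10}$ to the large-index tail of the dyadic decomposition, but that tail is proportional to $2^{-R}$ and at the balance point contributes terms of the shape $\frac{M^{1/2}}{N^{1/2}}p^{161/108}$, etc., not a standalone $p^{9/10}$. As written, your proof would either silently drop an unbounded range of $n$ or illegitimately apply Theorem~\ref{critrangethm} where its hypothesis $N_{\mathrm{thm}}\geq p^{2/5}$ fails; you should insert an explicit truncation step $n \geq p^{2/5}$ at the outset and absorb its cost into the last error term.
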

We defer the proof of Proposition \ref{critandupper} (assuming the truth of Theorem \ref{critrangethm}) 
to Section \ref{ressec}. The proof of Theorem \ref{critrangethm} is given in Section \ref{critsec}.

The last estimate we require in order to obtain bounds for \eqref{keyquant} uses spectral methods. For $\ell_1,\ell_2, h \in \mathbb{Z}_{\geq 1}$, define the shifted convolution sum
\begin{equation} \label{Dshift}
\mathcal{D}(\ell_1,\ell_2,h,N,M):=\sum_{\ell_1 n-\ell_2 m=h} a(m) a(n) V_1 \Big( \frac{\ell_2 m}{M} \Big) V_2 \Big( \frac{\ell_1 n}{N} \Big),
\end{equation}
and for $d$ a positive integer, 
\begin{equation} \label{Sshift}
\mathcal{S}(\ell_1,\ell_2,d,N,M):=\sum_{r \geq 1} \mathcal{D}(\ell_1,\ell_2,rd,N,M).
\end{equation}
Lemma \ref{trivlem} gives (for all $N \geq 20 M$), 
\begin{equation} \label{simple}
\mathcal{S}_{N,M,p,p}= \frac{p}{(MN)^{\frac{1}{2}}} \mathcal{S}(1,1,p,N,M)+O(p^{\frac{1}{2}+\varepsilon}).
\end{equation}
\begin{prop} \label{spectralest} 
Suppose $M,N$ are as in \eqref{dyadiccond} and $N \geq 20M$. Then we have 
\begin{equation*}
\mathcal{S}(\ell_1,\ell_2,p,N,M) \ll_{\varepsilon} p^{\varepsilon}(\ell_1 \ell_2,p)^{\frac{1}{2}} \Big( \frac{N}{p^{\frac{1}{2}}}+\frac{N^{\frac{5}{4}} M^{\frac{1}{4}} }{p}+\frac{N^{\frac{3}{4}} M^{\frac{1}{4}} }{p^{\frac{1}{4}}}+\frac{NM^{\frac{1}{2}}}{p^{\frac{3}{4}}} \Big).
\end{equation*}
Thus, 
\begin{equation} \label{spectralest1}
\mathcal{S}_{N,M,p,p} \ll_{\varepsilon} p^{\varepsilon} \Big( \frac{(Np)^{\frac{1}{2}}}{M^{\frac{1}{2}}}+\frac{N^{\frac{3}{4}}}{M^{\frac{1}{4}}}+\frac{p^{\frac{3}{4}} N^{\frac{1}{4}} }{M^{\frac{1}{4}}}+p^{\frac{1}{4}} N^{\frac{1}{2}} \Big)+p^{\frac{1}{2}+\varepsilon}.
\end{equation}
\end{prop}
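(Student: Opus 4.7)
The plan is to set up the shifted convolution problem for $\mathcal{S}(\ell_1,\ell_2,p,N,M)$ via Jutila's variant of the circle method, following the integral-weight strategy of Blomer--Mili\'{c}evi\'{c} \cite{BM1}. First I would insert a smooth detector for the linear equation $\ell_1 n - \ell_2 m = rp$ using a family of Farey-like fractions with denominators $c$ divisible by $4\ell_1\ell_2$ on scale $C \asymp N^{1000}$. This separates the $m$ and $n$ sums inside an integral over $\alpha$ and reduces matters to estimating the twisted partial sums
\begin{equation*}
\sum_n a(n) e\!\left(\tfrac{a \ell_1 n}{c}\right) V_2\!\left(\tfrac{\ell_1 n}{N}\right) \quad\text{and}\quad \sum_m a(m) e\!\left(-\tfrac{a \ell_2 m}{c}\right) V_1\!\left(\tfrac{\ell_2 m}{M}\right).
\end{equation*}

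I would then apply the half-integral-weight Voronoi formula (Lemma \ref{Voronoi}) to both sums, dualising each to length $\asymp c^2/X$ and producing in each application a theta multiplier $\nu_\theta$. The key observation is that the two multipliers occur at opposite signs and their product collapses to the Kronecker symbol $\chi:=\bigl(\tfrac{4\ell_1\ell_2}{\bullet}\bigr)$, an even quadratic character, while the two half-integer Bessel kernels of order $k-1$ recombine into an integer-order kernel. Consequently the residual exponential sum becomes the classical weight-zero Kloosterman sum $S(b,h,c,\chi)$ on $\Gamma_0(4\ell_1\ell_2)$ with nebentypus $\chi$, and the problem is transplanted into an integral-weight setting of the shape
\begin{equation*}
\frac{M^2 \ell_1\ell_2}{C^3}\sum_{|b|\asymp \mathcal{K}}\;\sum_{\substack{\ell_1 n-\ell_2 m = b\\ \ell_1 n\asymp C^2 N/M^2,\,\ell_2 m\asymp C^2/M}} a(m)a(n)\sum_{\ell_1\ell_2\mid c}\frac{S(b,h,c,\chi)}{c}\Phi\!\left(\tfrac{4\pi\sqrt{|b|h}}{c}\right).
\end{equation*}

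Next I would apply the Kuznetsov formula to the inner $c$-sum to decompose it into the holomorphic, Maass, and Eisenstein spectra of level $4\ell_1\ell_2$ with nebentypus $\chi$. Each component is estimated via a large sieve inequality: for the Maass part, the flexible large sieve of Blomer--Mili\'{c}evi\'{c} \cite[Theorem~13]{BM1}, which tolerates the extra divisibility $\ell_1\ell_2\mid c$ and is responsible for the factor $(\ell_1\ell_2,p)^{1/2}$; for the Eisenstein part, a twisted analogue generalising Blomer--Harcos--Michel \cite{BHM} to the even quadratic nebentypus $\chi$, which is built using the explicit Fourier-expansion technology of Kiral--Young \cite{KY1} and Young \cite{Y1}. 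The cusp-form side is controlled by the Rankin--Selberg bound \eqref{L2} together with the Wilton bound \eqref{wilton}, so no appeal to Ramanujan--Petersson for $f$ is required; this is precisely what permits the leading term $N/p^{1/2}$ instead of the weaker $N p^{\theta-1/2}$. The four terms in the stated bound correspond to balancing the main spectral contribution against the residual pieces coming from the holomorphic spectrum, the discrete Maass spectrum, and the diagonal $b=0$ term.

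The second estimate \eqref{spectralest1} is then immediate by specialising $\ell_1=\ell_2=1$ in the spectral bound and substituting into \eqref{simple}, since the prefactor $p/(MN)^{1/2}$ converts each of the four terms into the advertised form and the $O(p^{1/2+\varepsilon})$ from \eqref{simple} absorbs into the error. The main obstacle will be the bookkeeping around the combined Jutila--Voronoi--Kuznetsov unfolding: one must verify that the test functions produced have the decay, support, and parity properties required to feed into the large sieve on each spectral piece, track the $(\ell_1\ell_2,p)$-dependence through every step, and confirm that the extra divisibility constraint $\ell_1\ell_2\mid c$ interacts cleanly with both the Maass and the twisted Eisenstein large sieve bounds — this last compatibility is the reason the flexibility in \cite[Theorem~13]{BM1} is essential rather than cosmetic.
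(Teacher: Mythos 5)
Your plan is essentially the paper's proof: Jutila's circle method at scale $C=N^{1000}$ with $16\ell_1\ell_2\mid c$, double Voronoi summation collapsing the two theta multipliers to $\chi=\bigl(\tfrac{4\ell_1\ell_2}{\bullet}\bigr)$, Kuznetsov on $\Gamma_0(16\ell_1\ell_2)$ with nebentypus $\chi$, then large sieve bounds on each spectral piece (BM1 Theorem~13 for Maass, the new twisted Eisenstein large sieve of Lemma~\ref{speceisen} built via \cite{KY1,Y1}), and finally deducing \eqref{spectralest1} by specialising $\ell_1=\ell_2=1$ into \eqref{simple}. One small correction worth making: the ``flexibility'' in \cite[Theorem~13]{BM1} is \emph{not} about tolerating the divisibility $\ell_1\ell_2\mid c$ (that is automatic once one applies Kuznetsov at level $16\ell_1\ell_2$); rather, it lies in treating coefficient sequences supported on multiples $m=rm'$ with $(m',r)=1$ (here $r=r_2 d$), which is what avoids the $r^{2\theta}$ loss one would incur from peeling off $\sqrt{r}\rho_g(r)$ via the pointwise bound \eqref{specialfbd} --- this, not the L2/Wilton bounds on $a(n)$, is what upgrades $Np^{\theta-1/2}$ to $Np^{-1/2}$ in the leading term.
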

We defer the proof of Proposition \ref{spectralest} to Section \ref{spectralsec}.

\subsection{Coefficients in residue classes} \label{ressec}
Here we prove Lemmas \ref{subby} and \ref{valest}, as well as Proposition \ref{critandupper}. 
The starting point for these results is Voronoi summation in the long variable.

For $r,v \in \mathbb{N}$, let 
\begin{equation*}
\mathrm{c}_v(r)=\sum_{\substack{u=1 \\ (u,v)=1 } }^{v} e \Big( \frac{ru}{v} \Big).
\end{equation*} 
denote the usual Ramanujan sum.  Ramanujan sums are multiplicative in the modulus variable,
\begin{equation} \label{multram}
\mathrm{c}_{st}(r)=\mathrm{c}_{s}(r) \mathrm{c}_{t}(r) \quad \text{for} \quad (s,t)=1.
\end{equation}

\begin{proof}[Proof of Lemma \ref{valest}]
Applying Lemma \ref{trivlem}, we write the right side of \eqref{triv1} as the sum of three subsums 
\begin{equation} \label{subsums}
\mathcal{S}_{N,M,p,p}^{\ell}=\frac{p}{(MN)^{1/2}} \sum_{\substack{m \equiv n \hspace{-0.2cm} \pmod{p} \\ m-n \equiv \ell \hspace{-0.2cm} 
\pmod{4}}} a(m) a(n) V_1 \Big( \frac{m}{M} \Big) V_2 \Big( \frac{n}{N} \Big), \quad \ell \in \{0,-1,1\},
\end{equation}   
incurring an error of $O(p^{\frac{1}{2}+\varepsilon})$. Kohnen's plus space condition in \eqref{kohnen}
explains why we need only consider $\ell \in \{0,-1,1\}$ in \eqref{subsums}.

Here, the condition $m \neq n$ is moot. In order to apply Lemma \ref{Voronoi} we will need the moduli
occurring in additive characters
to be divisible by $4$.  
Hence, we will use orthogonality in the form 
\begin{equation} \label{orthog1}
\frac{1}{2p} \mathrm{c}_4(r)  \big(1+\mathrm{c}_p(r) \big)=
\begin{cases}
0 & \text{if} \quad 2 \nmid r \\
-1 & \text{if} \quad 2 \mid r \text{ and } 4 \nmid r \\
1&  \text{if} \quad r \equiv 0 \pmod{4} 
\end{cases}
\times 
\begin{cases}
0 & \text{if} \quad r \not \equiv 0 \pmod{p}  \\
1 & \text{if} \quad r \equiv 0 \pmod{p}.
\end{cases}
\end{equation}
By \eqref{multram}, the left side \eqref{orthog1} is 
\begin{equation}  \label{orthog2}
\frac{1}{2p} \mathrm{c}_4(r)+\frac{1}{2p} \mathrm{c}_{4p}(r).
\end{equation}
Without loss of generality we now focus on the case $\ell=0$ in \eqref{subsums} and use \eqref{orthog2} to detect 
both congruence conditions in \eqref{subsums}.
The $\ell \in \{\pm 1 \}$ cases follows from similar arguments (replace $\mathrm{c}_4(r)$ with $1-\mathrm{c}_4(r)$ above).

We apply \eqref{orthog2} to $\mathcal{S}^{0}_{N,M,p,p}$ to remove both congruences. 
Thus $\mathcal{S}^{0}_{N,M,p,p}$ is the sum of the following two expressions
\begin{equation} \label{suff}
\frac{1}{2 (MN)^{1/2}}
 \sideset{}{^*} \sum_{j \hspace{-0.2cm} \pmod{4p} } \hspace{0.15cm} \Big (\sum_{m} a(m) e \Big({\frac{jm}{4p}} \Big) V_1 \Big( \frac{m}{M} \Big) \Big)
 \Big ( \sum_{n} a(n) e \Big({-\frac{jn}{4p}} \Big) V_2 \Big( \frac{n}{N} \Big) \Big ) ,
\end{equation}
and
\begin{equation} \label{firstterm}
\frac{1}{2(MN)^{1/2}} \hspace{0.15cm}  \sideset{}{^*} \sum_{j \hspace{-0.2cm} \pmod{4} }
\Big (\sum_{m} a(m) e \Big(\frac{jm}{4} \Big) V_1 \Big( \frac{m}{M} \Big) \Big ) \Big ( \sum_{n} a(n) e \Big({-\frac{jn}{4}} \Big) 
V_2 \Big( \frac{n}{N} \Big) \Big ).
\end{equation}
Partial summation and \eqref{wilton} guarantees that \eqref{firstterm} is negligible.
Let $u$ be any integer with $u \equiv -j \pmod{4p}$.
Applying Lemma \ref{Voronoi} to the $n$ summation in \eqref{suff} we obtain 
\begin{equation*} 
\sum_n a(n) e \Big({-\frac{jn}{4p}} \Big) V_2 \Big( \frac{n}{N} \Big)=\frac{N}{4p}  
\nu_{\theta} (\gamma) \sum_n a(n) e \Big({\frac{n \overline{j}}{4p}} \Big) \mathring{V}_{2} \Big( \frac{nN}{16 p^2} \Big),
\end{equation*}
where 
\begin{equation*}
\gamma:=\begin{pmatrix}
u & * \\
4p & *
\end{pmatrix} \in \Gamma_0(4p).
\end{equation*}
Observe that
\begin{equation*}
\nu_{\theta}(\gamma)=\varepsilon^{-1}_{-\overline{j}} \Big ( \frac{4p}{-\overline{j}} \Big)=\varepsilon^{-1}_{-j} \Big ( \frac{4p}{j} \Big).
\end{equation*}
Thus \eqref{suff} becomes 
\begin{equation} \label{penul}
\frac{N^{1/2}}{8 p M^{1/2}} \sum_m a(m) V_1 \Big( \frac{m}{M} \Big) \sum_n a(n) \mathring{V}_{2} \Big( \frac{nN}{16 p^2} \Big)
  K_{1}(-m,-n;4p). 
\end{equation}

Given the rapid decay of the Hankel transform we can truncate the $n$ summation $1 \leq n \leq p^{2+\varepsilon}/N$ in \eqref{penul} up to negligible error. 
By hypothesis we have that $N>Mp^{1+\delta}$, and thus any $m \asymp M$ and  $1 \leq n \leq p^{2+\varepsilon}/N$ satisfies 
$(mn,p)=1$.
Then by Lemma \ref{twistlem} and \eqref{salieval} we have 
\begin{equation} \label{Kweil}
 |K_1(-m,-n;4p)| \leq 4 p^{\frac{1}{2}}.
\end{equation}
Inserting \eqref{Kweil} into \eqref{penul}, we see that \eqref{penul} is
\begin{equation*}
\ll \frac{N^{1/2}}{ p^{1/2} M^{1/2}}  \Big (\sum_m  \Big \lvert a(m) V_1 \Big( \frac{m}{M} \Big) \Big \rvert \Big )   \Big (\sum_n \Big \lvert a(n) \mathring{V}_{2} \Big( \frac{nN}{16 p^2} \Big) \Big \rvert \Big ).
\end{equation*}
Both the $m$ and $n$ summations can be estimated trivially using Cauchy--Schwarz and \eqref{L2}.
\end{proof}

\begin{proof}[Proof of Lemma~\ref{subby}]
Repeat the proof of Lemma \ref{valest} to the display \eqref{penul}. Note this incurs an error of $O(p^{\frac{1}{2}+\varepsilon})$. 
As in the last proof, we give details of the argument when $\ell=0$.
The $\ell \in \{\pm 1\}$ cases follows from similar arguments.

Interchanging the $m$ and $n$ summation in \eqref{penul} gives 
\begin{equation} \label{swappy} 
\frac{N^{1/2}}{8 p M^{1/2}} \sum_n a(n) \mathring{V}_{2} \Big( \frac{nN}{16 p^2} \Big)
 \sum_m a(m) V_1 \Big( \frac{m}{M} \Big)  K_{1}(-m,-n;4p). 
\end{equation}
We apply Cauchy--Schwarz (now in the $n$ variable) and Lemma \ref{twistlem}, and we see that \eqref{swappy} is
\begin{equation} \label{swappy2}
\ll  \frac{1}{M^{\frac{1}{2}}} \max_{e,f,g \hspace{-0.15cm} \pmod{4}} \Big(\sum_{\substack{M \leq m_1,m_2 \leq 2M \\ m_1 \equiv e \hspace{-0.15cm} \pmod{4} \\ m_2 \equiv f \hspace{-0.15cm}  \pmod{4}}} |a(m_1) a(m_2) | \Big | \sum_{\substack{ 1 \leq n \ll p^{2+\varepsilon}/N \\ n \equiv g \hspace{-0.15cm} \pmod{4}}}  S(m_1,\overline{16} n;p) \overline{S(m_2,\overline{16} n;p)} \Big  |   \Big)^{\frac{1}{2}}
\end{equation}
Using \eqref{salieval} the summation over $n$ becomes 
\begin{equation} \label{intermedsum}
p \sum_{\substack{1 \leq n \ll p^{2+\varepsilon}/N \\ n \equiv g \hspace{-0.15cm} \pmod{4}}} \hspace{0.1cm} \sum_{\substack{x,y \hspace{-0.15cm} \pmod{p} \\ x^2 \equiv \overline{16} m_1 n \hspace{-0.15cm} \pmod{p} \\ y^2 \equiv  \overline{16} m_2 n \hspace{-0.15cm} \pmod{p}  }} e \Big( \frac{2(x-y)}{p} \Big) .
\end{equation}

When $m_1=m_2$, we estimate \eqref{intermedsum} trivially by $p^{3+\varepsilon}/N$. Then using \eqref{L2} yields a contribution of  $p^{3/2+\varepsilon}/N^{\frac{1}{2}}$ to \eqref{swappy2}.

Since $m_1,m_2 \asymp M$ and $M \ll p^{1/2-\delta_0}$ by hypothesis, we have that $m_1 \neq m_2$ implies $m_1 \not \equiv m_2 \pmod{p}$. Thus we can write \eqref{intermedsum} in terms of complete sums
\begin{equation*}
p \sum_{n=0}^{p-1} \sum_{\substack{x,y \hspace{-0.15cm} \pmod{p} \\ x^2 \equiv \overline{16} m_1 n \hspace{-0.15cm} \pmod{p} \\ y^2 \equiv  \overline{16} m_2 n \hspace{-0.15cm} \pmod{p}  }} e \Big( \frac{2(x-y)}{p} \Big)  \sum_{\substack{1 \leq w \leq p^{2+\varepsilon}/N \\ w \equiv g \pmod{4} }} \frac{1}{p} \sum_{t=0}^{p-1} e \Big( \frac{t(n-w)}{p} \Big).
\end{equation*}
After interchanging the $t$ and $w$ summation, and estimating the sum over $w$ in the usual way, it suffices to estimate the maximum of
\begin{equation} \label{supfourier}
p \log p \max_{t \in \mathbb{Z} \cap [0,p-1]} \Big | \sum_{n=0}^{p-1} \sum_{\substack{x,y \hspace{-0.15cm} \pmod{p} \\ x^2 \equiv \overline{16} m_1 n \hspace{-0.15cm} \pmod{p} \\ y^2 \equiv  \overline{16} m_2 n \hspace{-0.15cm} \pmod{p}  }}  e \Big(\frac{2(x-y)}{p} \Big) e \Big(\frac{tn}{p} \Big) \Big |.
\end{equation}
The $n=0$ term contributes $O(p \log p)$. Let $2 \leq a \leq p-2$ (since $m_1 \not \equiv m_2 \pmod{p})$ be such that $a^2 \equiv m_1 \overline{m_2} \pmod{p}$. We have $(x \overline{y})^2 \equiv m_1 \overline{m_2} \pmod{p}$.  This implies that if  $y \equiv v \pmod{p}$ for some $1 \leq v \leq p-1$, then $x \equiv \pm av \pmod{p}$ and $n \equiv 16 \overline{m_2} v^2 \equiv 16 \overline{m_1} a^2 v^2  \pmod{p}$. Conversely, given $1 \leq v \leq p-1$, let $y \equiv v \pmod{p}$ and $x \equiv \pm av \pmod{p}$. Then $(x \overline{y})^2 \equiv m_1 \overline{m_2} \pmod{p}$. Moreover $16 \overline{m_2} y^2 \equiv 16 \overline{m_1} x^2 \pmod{p}$. Thus \eqref{supfourier} becomes (with $n=0$ excluded)
\begin{equation*}
p \log p \max_{t \in \mathbb{Z} \cap [0,p-1]} \Big | \sum_{v=1}^{p-1} e \Big( \frac{16 t \overline{m_2} v^2+(\pm a-1)v}{p} \Big ) \Big |.
\end{equation*}
If $t \neq 0$, then the above line is bounded above by $p^{\frac{3}{2}} \log p$ since it is a Gauss sum. If $t=0$, then it is a Ramanujan sum and is $p \log p$ since $2 \leq a \leq p-2$. Applying Cauchy--Schwarz and \eqref{L2} we obtain a contribution of $p^{\frac{3}{4}+\varepsilon} M^{\frac{1}{2}}$ to \eqref{swappy2} from this case.
\end{proof}

Now we prove Proposition \ref{critandupper} assuming the validity of Theorem \ref{critrangethm}, whose proof is deferred to Section \ref{critsec}.

\begin{proof}[Proof of Proposition~\ref{critandupper} (assuming Theorem \ref{critrangethm})]
Repeat the proof of Lemma \ref{valest} to the display \eqref{penul}. Note this incurs an error $O( p^{\frac{1}{2}+\varepsilon})$. 
As in the previous two proofs, we give details of the argument when $\ell=0$.
The $\ell \in \{\pm 1\}$ cases follows from similar arguments. 

We may restrict the $n$ summation in \eqref{penul} to $n \geq p^{\frac{2}{5}}$ using the same argument 
following \eqref{penul} with error $O(p^{\frac{9}{10}+\varepsilon})$. Define the sets
\begin{equation*}
\mathcal{N}_{0}(f):=\{n \in \mathbb{N}:  0 \leq |a(n)| \leq 1 \},
\end{equation*}
and for all $r \geq 1$,
\begin{equation*}
\mathcal{N}_{r}(f):=\{n \in \mathbb{N}:  2^{r-1}<|a(n)| \leq 2^r \}.
\end{equation*}
We decompose \eqref{penul} 
into $O(\log^2 p)$ subsums 
\begin{equation}
\frac{N^{1/2}}{8 p M^{1/2}}  \sum_{m} a(m) V_1 \Big( \frac{m}{M} \Big) 
\sum_{\substack{n \asymp A \\ n \in \mathcal{N}_r(f)}} a(n) 
\mathring{V}_{2} \Big( \frac{nN}{16 p^2} \Big) K_1(-m,-n;4p),   \label{penuluv}
\end{equation}
where
\begin{equation} \label{Acond}
p^{2/5} \leq A \leq \frac{p^{2+\varepsilon}}{N}.
\end{equation}

Observe that \eqref{L2} implies that for any $X \geq 1$ we have
\begin{equation} \label{size}
|\mathcal{N}_r(f) \cap [0,X] | \ll \frac{X^{1+\varepsilon}}{2^{2r}}.
\end{equation}
Applying Cauchy--Schwarz to the $m$-summation in \eqref{penuluv} and then using \eqref{L2} 
we see that \eqref{penuluv} is
\begin{equation} \label{CR}
\ll \frac{N^{\frac{1}{2}}}{p^{1-\varepsilon}} \Big( \sum_{M \leq m \leq 2M} \Big | \sum_{\substack{n \asymp A \\ n \in \mathcal{N}_r(f)}} a(n) K_1(-m,-n;4p)  \mathring{V}_{2} \Big( \frac{nN}{16 p^2} \Big) \Big |^2 \Big)^{\frac{1}{2}}. 
\end{equation}
Expanding the square and interchanging the summations, the expression inside the square root  in \eqref{CR} becomes
\begin{align} 
& \sum_{\substack{n_1,n_2 \asymp A \\ n_1,n_2 \in \mathcal{N}_r(f)}} a(n_1) \overline{a(n_2)} \mathring{V}_{2} \Big( \frac{n_1 N}{16 p^2} \Big) \overline{\mathring{V}_{2} \Big( \frac{n_2N}{16 p^2} \Big)}  \nonumber \\ & \times \sum_{M \leq m \leq 2 M}  K_1(-m,-n_1;4p) \overline{K_1(-m,-n_2;4p)}. 
\label{bracketed}
\end{align}
We use the bound $|K_1(-m,-n,p)| \leq 4p^{1/2}$ (we have $(mn,p)=1$ in the relevant ranges), Cauchy-Schwarz, and \eqref{size} to estimate \eqref{bracketed} by
\begin{equation*}
\ll \frac{p^{5}}{N^2} \frac{M}{2^{2r}},
\end{equation*}
for any $A$ satisfying \eqref{Acond}.
Inserting this into \eqref{CR}, we see that \eqref{penuluv} is
\begin{equation} \label{trivv}
\ll \frac{1}{2^r} p^{\frac{3}{2}} \Big( \frac{M}{N} \Big)^{\frac{1}{2}}.
\end{equation}

We now estimate \eqref{bracketed} non-trivially to obtain another upperbound for \eqref{penuluv}. Applying triangle inequality to \eqref{bracketed}, we see that \eqref{bracketed} is
\begin{equation} \label{newintermed}
\ll  2^{2r} \sum_{\substack{ n_1,n_2 \asymp A  \\ n_1,n_2 \in \mathcal{N}_r(f) } } \Big | \sum_{M \leq m \leq 2 M}  K_1(-m,-n_1;4p) \overline{K_1(-m,-n_2;4p)} \Big |. 
\end{equation}
By positivity we can extend  the summation over all $n_1,n_2 \asymp A$ in \eqref{newintermed}. Applying Lemma \ref{twistlem}, \eqref{salieval} and the fact that $p \equiv 1 \pmod{4}$ we see that the summand in the $m$ summation is  
\begin{equation} \label{id}
K_1(-m,-n_1;4) \overline{K_1(-m,-n_2;4)} S(m,\overline{16} n_1;p) \overline{S(m,\overline{16} n_2;p)}.
\end{equation}
Note that $K_1(-m,-n_1;4) \overline{K_1(-m,-n_2;4)}$ is an absolute constant depending only on $m,n_1,n_2$ modulo $4$. Thus we rewrite \eqref{bracketed} 
so that each summation variable runs in a fixed congruence class modulo $4$. Thus it suffices to bound
\begin{equation} \label{intermed}
2^{2r} \sum_{\substack{n_1,n_2 \asymp A \\ n_1 \equiv e \hspace{-0.15cm} \pmod{4} \\ n_2 \equiv f \hspace{-0.15cm}  \pmod{4}}} \Big | \sum_{\substack{M \leq m \leq 2M \\ m \equiv g \hspace{-0.15cm} \pmod{4}}}  S(m,\overline{16} n_1;p) \overline{S(m,\overline{16} n_2;p)} \Big  |,
\end{equation}
for $e,f,g$ modulo $4$. The hypothesis on $N$ ensures that
\begin{equation*}
p^{\frac{41}{100}} \leq \frac{p^{2}}{N} \leq p^{\frac{59}{100}}.
\end{equation*}
Recall \eqref{Acond}.
The bound in Theorem \ref{critrangethm} applied to \eqref{intermed} is monotone increasing in $A$, thus by Theorem \ref{critrangethm} and Remark \ref{mod4} we can bound \eqref{intermed} by 
\begin{equation} \label{keyineq2}
\ll 2^{2r} \Big(\frac{M}{N^2} p^{\frac{134}{27}}+\frac{M}{N} p^{\frac{187}{54}}+\frac{p^{\frac{295}{54}}}{N^2}+\frac{p^{\frac{107}{27}}}{N}+\frac{p^{\frac{347}{108}}}{N^{\frac{1}{2}}}+p^{\frac{133}{54}} \Big),
\end{equation}
uniformly in $A$ satisfying \eqref{Acond}. Inserting \eqref{keyineq2} into \eqref{CR}, we see that \eqref{penuluv} is
\begin{equation} \label{oldbd}
\ll 2^r p^{\varepsilon} \Big( \frac{M^{\frac{1}{2}}}{N^{\frac{1}{2}}} p^{\frac{40}{27}}+M^{\frac{1}{2}} p^{\frac{79}{108}}+\frac{p^{\frac{187}{108}}}{N^{\frac{1}{2}}}+p^{\frac{53}{54}}+N^{\frac{1}{4}} p^{\frac{131}{216}}+N^{\frac{1}{2}} p^{\frac{25}{108}} \Big).
\end{equation} 
 Denoting the right side of \eqref{oldbd} as $2^r X$, one can choose 
 \begin{equation*}
 2^r=\frac{p^{\frac{3}{4}} \big(\frac{M}{N} \big)^{\frac{1}{4}} }{X^{\frac{1}{2}}}
\end{equation*}
to balance \eqref{trivv} and \eqref{oldbd}. Substituting this quantity back into \eqref{oldbd} and noting the error $O(p^{9/10+\varepsilon})$ inherited at the start of the argument yields Proposition \ref{critandupper}.
\end{proof}

\subsection{Proof of Theorem~\ref{secondmoment}}
\begin{proof}[Proof of Theorem~\ref{secondmoment} (assuming Proposition \ref{spectralest} and Theorem \ref{critrangethm})]
The main terms in Theorem \ref{secondmoment} were computed in Section \ref{mainterms}, incurring a cost (cf. \eqref{auxmain}) 
\begin{equation} \label{bd1}
\ll p^{\frac{3}{4}+\frac{\theta}{2}+\varepsilon}.
\end{equation}
Without loss of generality we have $0 \leq \theta \leq \frac{7}{64}$ by Kim--Sarnak \cite{Ki}. It suffices to bound $\mathcal{S}_{N,M,p,d}$ and $\widetilde{\mathcal{S}}_{N,M,p,d}$ for $d=1$ and $p$ and all $M, N$ satisfying \eqref{dyadiccond}. 

Applying Lemma \ref{plain1} we obtain
\begin{equation} \label{bd2}
\mathcal{S}_{N,M,p,1} \ll p^{\frac{1}{2}+\varepsilon} \quad \text{ for all } \quad M, N \quad \text{ satisfying } \quad \eqref{dyadiccond}.
\end{equation}
Applying Lemma \ref{tilde1} we have 
\begin{equation} \label{bd3}
\widetilde{\mathcal{S}}_{N,M,p,1} \ll p^{\frac{7}{8}+\frac{\theta}{4}+\varepsilon}+p^{\frac{15}{16}+\frac{\theta}{8}+\varepsilon} \ll p^{\frac{15}{16}+\frac{\theta}{8}+\varepsilon}
\text{ for all } \quad M,N \quad \text{ satisfying } \quad \eqref{dyadiccond},
\end{equation}
where the last inequality follows by the above fact about $\theta$.

Recalling \eqref{keyquant}, it suffices to consider $\mathcal{S}_{N,M,p,p}$. Applying Lemma \ref{secondregime} we have 
\begin{equation} \label{bd4}
\mathcal{S}_{N,M,p,p} \ll p^{\frac{1}{2}+\theta+\varepsilon} \quad \text{ for all } \quad M \leq N \leq 20M.
\end{equation}

We now can assume $N \geq 20M$.  Let $\alpha$ and $\beta$ be such that 
\begin{equation*}
M:=p^{\alpha}, \quad N:=p^{\beta} \quad \text{such that} \quad \alpha+\beta \leq 2, \quad \alpha \geq 0, \quad \text{and} \quad \beta \geq 0.
\end{equation*}
Let 
\begin{equation*}
\eta_0:=\frac{1}{600}.
\end{equation*}
We will now prove that 
\begin{equation} \label{thebd}
\mathcal{S}_{M,N,p,p} \ll p^{1-\eta_0}. 
\end{equation}
Lemma \ref{ramsub} guarantees that \eqref{thebd} holds when
\begin{equation} \label{Sp1}
 \beta<\frac{4}{3}-\frac{4}{3} \eta_0-\frac{2}{3} \alpha.
\end{equation}
Lemma \ref{valest} guarantees that \eqref{thebd} holds when
\begin{equation} \label{Sp2}
\quad \beta>1+\alpha+2 \eta_0.
\end{equation}
Proposition \ref{spectralest} guarantees that \eqref{thebd} holds when
\begin{equation} \label{Sp3}
 0 \leq \alpha \leq \frac{151}{300} \quad \text{and} \quad \beta<\frac{149}{150}+\alpha,
 \end{equation}
 or
 \begin{equation*}
 \alpha>\frac{151}{300}   \quad \text{and} \quad \beta<\frac{449}{300}.  
 \end{equation*}
 Lemma \ref{subby} guarantees that \eqref{thebd} holds when
 \begin{equation} \label{Sp4}
\alpha<\frac{1}{2}-2 \eta_0 \quad \text{and} \quad \beta>1+2 \eta_0.
\end{equation}
Plotting the inequalities \eqref{Sp1}--\eqref{Sp4} shows that only the solid trapezoid in the $\alpha \beta$--plane with vertices 
\begin{equation*}
\Big (\frac{299}{600},\frac{901}{600} \Big), \quad \Big (\frac{151}{300},\frac{449}{300} \Big ), \quad \Big (\frac{149}{300},\frac{149}{100} \Big) \quad \text{and} \quad \Big (\frac{149}{300},\frac{3}{2} \Big).
\end{equation*}
need only be considered now. Writing out the exponents of Proposition \ref{critandupper} (which was established under the assumption of Theorem \ref{critrangethm}) we have 
\begin{align*}
 \frac{\alpha}{2}-\frac{\beta}{2}+\frac{161}{108}, &\quad  \frac{\alpha}{2}-\frac{\beta}{4}+\frac{241}{216}, \quad \frac{\alpha}{4}-\frac{\beta}{2}+\frac{349}{216}, \\
  \frac{\alpha}{4}-\frac{\beta}{4}+\frac{67}{54},  & \quad \frac{\alpha}{4}-\frac{\beta}{8}+\frac{455}{432},  \quad \frac{\alpha}{4}+\frac{187}{216} \quad \text{and} \quad \frac{9}{10}.
\end{align*} 
A computation shows that each linear function evaluated at each of the four vertices is less than $1-\eta_0$. Thus \eqref{bd1}--\eqref{bd4} are subsumed by \eqref{thebd}. The rest of the paper is dedicated to proving Proposition \ref{spectralest} and Theorem \ref{critrangethm}, and that will complete the proof of Theorem \ref{secondmoment}.
\end{proof}

\section{Automorphic preliminaries II (integral weight)} \label{autprelim2}
\subsection{Maass forms}
We give a brief background on Maass forms relevant to our setting. One can see \cite[Section~4]{DFI1}, \cite[Section~2]{HM} and \cite[Sections~2 and 5]{BM1} for supplementary material.

Throughout $\kappa=0$ or $1$. For $\gamma \in \text{SL}_2(\mathbb{R})$, define the weight $\kappa$ slash operator for real analytic forms by
\begin{equation*}
g \vert_{\kappa} \gamma:=j(\gamma,\tau)^{-\kappa} g(\gamma \tau), \quad j(\gamma,\tau):=\frac{c \tau+d}{|c \tau+d|}=e^{i \text{arg}(c \tau+d)},
\end{equation*}
where the argument is always chosen in $(-\pi,\pi]$. The weight $\kappa$ Laplacian is defined by 
\begin{equation*}
\Delta_{\kappa}:=y^2  \Big(\frac{\partial^2}{\partial x^2}+\frac{\partial^2}{\partial y^2}  \Big)-i \kappa y \frac{\partial }{\partial x}.
\end{equation*}
A smooth function $g:\mathbb{H} \rightarrow \mathbb{C}$ is an eigenfunction of $\Delta_{\kappa}$ with eigenvalue $\lambda \in \mathbb{C}$ if
\begin{equation*}
(\Delta_{\kappa}+\lambda)g=0.
\end{equation*}
All eigenfunctions of $\Delta_{\kappa}$ are real analytic since it is an elliptic operator.

Let $\chi$ be a character modulo $D$ with $\chi(-1)=(-1)^{\kappa}$. A function $g:\mathbb{H} \rightarrow \mathbb{C}$ is automorphic of weight $\kappa$ and nebentypus $\chi$ for $\Gamma_0(D)$ if 
\begin{equation*}
g \vert_{\kappa} \gamma =\chi(d)  g \quad \text{for all} \quad \gamma:=\begin{pmatrix}
a & b \\
c & d
\end{pmatrix} \in \Gamma_0(D).
\end{equation*}
Let $\mathcal{A}_{\kappa}(D,\chi)$ denote the space of such functions. If $g \in \mathcal{A}_{\kappa}(D,\chi)$ is a smooth eigenfunction of $\Delta_{\kappa}$ that also satisfies the growth condition
\begin{equation*}
g(\tau) \ll y^{\sigma}+y^{1-\sigma} \quad \text{for all} \quad \tau:=x+iy \in \mathbb{H} \quad \text{and some} \quad \sigma>0, 
\end{equation*}
then it is called a Maass form. Let 
\begin{equation*}
\mathcal{L}_{\kappa}(D,\chi):=\{g \in \mathcal{A}_{\kappa}(D,\chi): \| g\| < \infty \},
\end{equation*}
where the norm is induced by the Petersson inner product 
\begin{equation*}
\langle g_1,g_2 \rangle:=\int_{\Gamma_0(D) \backslash \mathbb{H}} g_1(\tau) \overline{g_2(\tau)} d \mu(\tau), \quad d \mu:=\frac{dx dy}{y^2}.
\end{equation*} 
Let $\mathcal{R}_{\kappa}(D, \chi)$ denote the subspace of $\mathcal{L}_{\kappa}(D, \chi)$ consisting of smooth functions $g$ such that $g$ and $\Delta_{\kappa} g$ are bounded on $\mathbb{H}$.  One can show that $\mathcal{R}_{\kappa}(D, \chi)$ is dense in $\mathcal{L}_{\kappa}(D, \chi)$. For all $g_1,g_2 \in \mathcal{R}_{\kappa}(D,\chi)$ we have 
\begin{equation*}
\langle \Delta_{\kappa} g_1,g_2 \rangle=\langle g_1, \Delta_{\kappa} g_2 \rangle. 
\end{equation*}
Furthermore, for any $g \in \mathcal{R}_{\kappa}(D,\chi)$ we have 
\begin{equation*}
\langle g,-\Delta_{\kappa} g \rangle \geq \frac{|\kappa|}{2} \Big(1-\frac{|\kappa|}{2} \Big) \geq 0.
\end{equation*}
Thus by a theorem of Friedrichs, the operator $-\Delta_{\kappa}$  has a unique self-adjoint extension to $\mathcal{L}_{\kappa}(D, \chi)$ (which we also denote $-\Delta_{\kappa}$). Then by a theorem of von Neumann, the space $\mathcal{L}_{\kappa}(D,\chi)$ has a complete spectral resolution with respect $-\Delta_{\kappa}$. There is a continuous, discrete, and residual spectrum, worked out in detail by Maass and Selberg.

Let $\mathfrak{a}$ be a cusp of $\Gamma_0(D)$ and 
\begin{equation*}
{\Gamma_0(D)}_{\mathfrak{a}}:= \{ \gamma \in \Gamma_0(D) : \gamma \mathfrak{a}=\mathfrak{a} \} ,
\end{equation*}
denote its stability group. Let $\sigma_{\mathfrak{a}}$ denote the unique up to translation on the right matrix in $\text{SL}_2(\mathbb{R})$ satisfying $\sigma_{\mathfrak{a}} \infty =\mathfrak{a}$ and $\sigma_{\mathfrak{a}}^{-1} {\Gamma_0(D)}_{\mathfrak{a}} \sigma_{\mathfrak{a}}={\Gamma_0(D)}_{\infty}$. We say $\mathfrak{a}$ is \emph{singular} when 
\begin{equation*}
\chi \Big(\sigma_{\mathfrak{a}}^{} \begin{pmatrix}
1 & 1\\
0 &1 
\end{pmatrix} \sigma_{\mathfrak{a}}^{-1} \Big)=1.
\end{equation*}

For each singular cusp $\mathfrak{a}$ (and only at such cusps), the Eisenstein series is defined by 
\begin{equation*}
E_{\mathfrak{a}}(\tau,s,\chi)=\sum_{\gamma \in {\Gamma_0(D)}_{\mathfrak{a}} \backslash \Gamma_0(D)} \overline{\chi}(\gamma) j(\sigma_{\mathfrak{a}}^{-1} \gamma,\tau)^{-\kappa} (\Im \sigma_{\mathfrak{a}}^{-1} \gamma \tau)^s, \quad \Re s>1 \quad \text{and} \quad \tau \in \mathbb{H}.
\end{equation*}
One can check that each $E_\mathfrak{a}$ is independent of the choice of scaling matrix $\sigma_{\mathfrak{a}}$. Moreover if $\mathfrak{b}=\gamma \mathfrak{a}$ are $\Gamma_0(D)$-equivalent cusps, then (cf. \cite[(3.2)]{Y1})
\begin{equation} \label{welldef}
E_{\gamma \mathfrak{a}}(\tau,s,\chi):=\overline{\chi}(\gamma) E_{\mathfrak{a}}(\tau,s,\chi).
\end{equation}
Selberg \cite{Sel} proved that $E_{\mathfrak{a}}(\tau,s,\chi)$ has analytic continuation to the whole complex plane with only finitely many simples poles $s$ with $\frac{1}{2}<s \leq 1$. In particular, when $\chi$ is non-principal there are no poles in the region $\Re s \geq \frac{1}{2}$. When $\chi$ is principal, there is only one simple pole at $s=1$ in this region with constant (but automorphic) residue 
\begin{equation*}
\text{Res}_{s=1} E_{\mathfrak{a}}(\tau,s,\chi)=\frac{1}{\text{Vol}(\Gamma_0(D) \backslash \mathbb{H})}.
\end{equation*}
If $s$ is not a pole of $E_{\mathfrak{a}}(\tau,s,\chi)$, then $E_{\mathfrak{a}}(\tau,s,\chi)$ is a Maass form with eigenvalue $\lambda(s)$, but is not in $\mathcal{L}_{\kappa}(D,\chi)$.  
The continuous spectrum is composed of all the Eisenstein series on the critical line $s=\frac{1}{2}+it$.

The reminder of the spectrum is discrete and is spanned by \emph{Maass cusp forms}. It is countable and of finite multiplicity (with $\infty$ being the only limit point). We denote it by
\begin{equation*}
\lambda_1 \leq \lambda_2 \leq \dots
\end{equation*}
To summarise, every $g \in \mathcal{L}_{\kappa}(D,\chi)$ decomposes as 
\begin{equation} \label{gexpand}
g(\tau)=\sum_{j \geq 0} \langle g,u_j \rangle u_j(\tau)+\sum_{\mathfrak{a}} \frac{1}{4 \pi } \int_{\Re s=\frac{1}{2}} \Big \langle g, E_{\mathfrak{a}} \Big(\star,\frac{1}{2}+it,\chi \Big) \Big \rangle E_{\mathfrak{a}} \Big(\tau,\frac{1}{2}+it,\chi \Big) dt,
\end{equation}
where $u_0(\tau)$ is the constant function of Petersson norm $1$ (if $\kappa=0$), $\mathcal{C}_{\kappa}(D,\chi)=\{u_j \}_{j \geq 1}$ denotes an orthonormal basis of Maass cusp forms, and $\{\mathfrak{a}\}$ runs over all singular cusps of $\Gamma_0(D)$ relative to $\chi$. The convergence in \eqref{gexpand} is with respect to the norm topology.
If $g \in \mathcal{R}_{\kappa}(D, \chi)$, then \eqref{gexpand} converges pointwise absolutely and uniformly on compacta.

A Maass cusp form decays exponentially at the cusps and admits a Fourier expansion with the zeroth Fourier coefficient vanishing. At $\infty$, such an expansion is given by 
\begin{equation} \label{maassfourier}
g(\tau)=\sum_{\substack{n=-\infty}}^{\infty} \rho_g(n) W_{\frac{kn}{2|n|},it}(4 \pi |n| y) e(nx),
\end{equation}
where $W_{\alpha,\beta}(y)$ is the usual Whittaker function and $\lambda_g:=\frac{1}{4}+t_g^2$ is the Laplace eigenvalue of $g$. We call $t_g$ the spectral parameter of $g$. When $\kappa=0$, note that $t_g \in [-i \theta,i \theta] \cup [0,\infty)$, where $\theta=\frac{7}{64}$ is the best currently known \cite{Ki}.

The Eisenstein series has the expansion
\begin{equation} \label{eisexpand}
E_{\mathfrak{a}} \Big(\tau,\frac{1}{2}+it,\chi \Big)=\delta_{\mathfrak{a}=\infty} y^{\frac{1}{2}+it}+\phi_{\mathfrak{a}} \Big(\frac{1}{2}+it \Big) y^{\frac{1}{2}-it}+\sum_{\substack{n=-\infty \\ n \neq 0}}^{\infty} \rho_{\mathfrak{a}}(n,t) W_{\frac{kn}{2|n|},it}(4 \pi |n| y) e(nx),
\end{equation}
where $\phi_{\mathfrak{a}} \big( \frac{1}{2}+it \big)$ is the $(\mathfrak{a},\infty)$ entry of the relevant scattering matrix.

\subsection{Holomorphic forms}
Let $\chi$ be a Dirichlet character modulo $D$ with $\chi(-1)=(-1)^{\kappa}$ ($\kappa=0$ or $1$). For $\ell \in \mathbb{N}$ with $\ell \equiv {\kappa} \pmod{2}$, let $\mathcal{S}_{\ell}(D,\chi)$ denote the space of holomorphic cusp forms of level $D$, weight $\ell$ and nebentypus $\chi$. This space is equipped with the Petersson inner product
\begin{equation*}
\langle g_1,g_2 \rangle=\int_{\Gamma_0(D) \backslash \mathbb{H}} y^{\ell} g_1(\tau) \overline{g_2(\tau)} d \mu(\tau).
\end{equation*}
We also have the Fourier expansion (at $\infty$)
\begin{equation} \label{holofourier}
g(\tau)=\sum_{n \geq 1} \rho_g(n) (4 \pi n)^{\frac{\ell}{2}} e(n \tau).
\end{equation}

\subsection{Hecke operators and newform theory}
Recall that $\mathcal{L}_{\kappa}(D,\chi)$ (and the subspace generated by Maass cusp forms) is acted on by an algebra $\mathbf{T}$ generated by Hecke operators $\{T_n\}_{n \geq 1}$.  Each operator is defined by 
\begin{equation*}
(T_n g)(\tau)=\frac{1}{\sqrt{n}} \sum_{ad=n} \chi(a) \sum_{b \hspace{-0.15cm} \pmod{d}} g \Big(\frac{a \tau+b}{d} \Big)
\end{equation*}
These operators are commutative and multiplicative. They also satisfy the relation 
\begin{equation} \label{heckerel}
T_m T_n=\sum_{d \mid (m,n)} \chi(d) T_{\frac{mn}{d^2}}.
\end{equation}
Let $\mathbf{T}^{(D)}$ denote the subalgebra generated by $\{T_{n}\}_{(n,D)=1}$. We call a Maass cusp form which is an eigenform for $\mathbf{T}^{(D)}$ a \emph{Hecke--Maass} cusp form. The elements of $\mathbf{T}^{(D)}$ are normal with respect to the Petersson inner product, so the cuspidal subspce of $\mathcal{L}_{\kappa}(D,\chi)$ admits an orthonormal basis of Hecke--Maass cusp forms. For a Hecke--Maass cusp form $g$, the following relations hold 
\begin{equation} \label{rel1}
\sqrt{n} \rho_g( \pm n)=\rho_g(\pm 1) \lambda_g(n) \quad \text{for} \quad (n,D)=1,
\end{equation}
where $\lambda_g(n)$ denotes the eigenvalue of $T_n$, and 
\begin{align}
\sqrt{m} \rho_g(m) \lambda_g(n)&=\sum_{d \mid (m,n)} \chi(d) \rho_g \Big(\frac{mn}{d^2} \Big) \sqrt{\frac{mn}{d^2}}, \label{rel2} \\
\sqrt{mn} \rho_g(mn)&=\sum_{d \mid (m,n)} \chi(d) \mu(d) \rho_g \Big( \frac{m}{d} \Big) \sqrt{\frac{m}{d}} \lambda_g \Big( \frac{n}{d} \Big) \label{rel3}.
\end{align}
The space of newforms is defined to be the space spanned by Hecke--Maass cusp forms orthogonal to the subspace spanned by the oldforms. If $g$ is a Hecke form and in the new subspace, then $g$ is a Hecke eigenform of all Hecke operators by Atkin--Lehner theory and the above relations are satisfied for all $n$.

For a Hecke--Maass cusp form $g$, we have the pointwise bound 
\begin{equation*}
|\lambda_g(n)| \leq  n^{\theta+\varepsilon},
\end{equation*}
and the Rankin--Selberg bound
\begin{equation*}
\sum_{n \leq x} |\lambda_g(n)|^2 \ll_{\varepsilon} \big( D (1+|t|) x \big)^{\varepsilon} x.
\end{equation*}

If $g$ is an $L^2$--normalised newform of weight $\kappa \in \{0,1\}$ and level $D$, then by \cite[(30)]{HM} we have
\begin{equation} \label{coeff1}
( D(1+|t_g|) \big)^{-\varepsilon}  \Big( \frac{\cosh(\pi t_g)}{D (1+|t_g|)^{\kappa}} \Big)^{\frac{1}{2}}  \ll_{\varepsilon} |\rho_g(1)| \ll_{\varepsilon} \big( D(1+|t_g|) \big)^{\varepsilon} \Big( \frac{\cosh(\pi t_g)}{D (1+|t_g|)^{\kappa} } \Big)^{\frac{1}{2}}.
\end{equation} 
These upper bound (resp. lower bound) are a consequence of the seminal works of Hoffstein--Lockhart \cite{HL} (resp. Duke--Friedlander--Iwaniec \cite{DFI1}).

We now give a more explicit treatment of bases and newforms due to Blomer and Mili\'{c}evi\'{c} \cite[Section~5]{BM1} and \cite{ISL}. Let $\mathcal{B}_{\kappa}(D,t,\chi)$ (resp. $\mathcal{H}_{\ell}(D,\chi)$) denote an $L^2$--basis for $\mathcal{A}_{\kappa}(D,t,\chi)$ (resp. $\mathcal{S}_{\ell}(D,\chi)$). In general, both of these bases will include oldforms. We will focus on Maass forms since the holomorphic case will be the same, only requiring small notational changes. Suppose $\chi$ has conductor $D^{\star}_{\chi}$ and underlying primitive character $\chi^{\star}$. For $u \mid D$, let $\widetilde{\chi}$ modulo $u$ be the character induced by $\chi^{\star}$ and $\mathcal{B}_{\kappa}^{\star}(u,D,t, \widetilde{\chi}) \subseteq \mathcal{B}_{\kappa}(D,t,\chi)$ denote the set of all $L^2(\Gamma_0(D) \backslash \mathbb{H})$--normalised newforms of level $u$ and spectral parameter $t$. We write $g \vert_d (\tau):=g(d \tau)$. By Atkin--Lehner theory we have 
\begin{equation}\label{newform}
\mathcal{A}_{\kappa}(D, t,\chi) = \underset{\substack{\vspace{1mm} \\\ \substack{D^{\star}_{\chi} \mid u\\
u \mid D}}}{\text{\LARGE $\bigcirc\!\!\!\!\!\!\!\perp$}} \,\, \underset{\substack{\vspace{1mm}\\ g \in \mathcal{B}_{\kappa}^{\star}(u, D,t , \widetilde{\chi})}}{ \text{\LARGE $\bigcirc\!\!\!\!\!\!\!\perp$}}  \,\,  \bigoplus_{d \mid \frac{D}{u}}  g|_d \cdot \mathbb{C}.
 \end{equation}
The first two sums in \eqref{newform} are orthogonal, but the last is not orthogonal in general. Gram--Schmidt is required to make this sum orthogonal.

An orthogonal basis $\mathcal{B}_{\kappa}(D,\chi)$ for $\mathcal{A}_{\kappa}(D,\chi)$ is produced by collecting all spectral parameters
\begin{equation*}
\mathcal{B}_{\kappa}(D,\chi):=\coprod_t \mathcal{B}_{\kappa}(D,t,\chi).
\end{equation*}
Correspondingly,  
\begin{equation*}
\mathcal{B}_{\kappa}(u,D,\widetilde{\chi}):=\coprod_t \mathcal{B}^{\star}_{\kappa}(u,D,t,\widetilde{\chi}).
\end{equation*}

For a newform $g \in \mathcal{B}_{\kappa}(u,D,\widetilde{\chi})$, define the arithmetic functions 
\begin{equation*}
r_g(c)=\sum_{b \mid c} \frac{\mu(b) \lambda_g(b)^2}{b} \Big(\sum_{d \mid b} \frac{\chi(b)}{b}  \Big)^{-2} , \quad \alpha(c):=\sum_{b \mid c} \frac{\mu(b) \chi(b)^2}{b^2}, \quad \beta(c):=\sum_{b \mid c} \frac{\mu^2(b) \chi(b)}{b},
\end{equation*}
and 
\begin{equation*}
L(g,s)^{-1}=\sum_{c} \frac{\mu_g(c)}{c^s} \,
\end{equation*}
where 
\begin{equation*}
\mu_g(p)=-\lambda_g(p), \quad \mu_g(p^2)=\chi(p) \quad \text{and} \quad \mu_g(p^{\nu})=0 \quad \text{for} \quad \nu>2. 
\end{equation*}
For $d \mid e$, define 
\begin{equation*}
\xi^{\prime}_e(d):=\frac{\mu(e/d) \lambda_g(e/d)}{r_g(e)^{\frac{1}{2}} (e/d)^{\frac{1}{2}} \beta(e/d)}, \quad \xi^{\prime \prime}_e(d):=\frac{\mu_g(e/d)}{(e/d)^{\frac{1}{2}} (r_g(e) \alpha(e) )^{\frac{1}{2}} }
\end{equation*}
Now write $e=e_1 e_2$ uniquely with $e_1$ squarefree, $e_2$ squarefull and $(e_1,e_2)=1$. Then for $d \mid e$ define
\begin{equation*}
\xi_e(d):=\xi_{e_1}^{\prime} ( (e_1,d) ) \xi^{\prime \prime}_{e_2} ( (e_2,d) ) \ll e^{\varepsilon} (e/d)^{\theta-\frac{1}{2}}.
\end{equation*}

\begin{lemma} \emph{\cite[Lemma~9]{BM1}}
Let $u \mid D$ and $g^{\star} \in \mathcal{B}_{\kappa}(u,D, \widetilde{\chi}) \subset  \mathcal{B}_{\kappa}(D,\chi)$ be an $L^2(\Gamma_0(D) \backslash \mathbb{H})$ normalised newform of level $u$. Then the set of functions 
\begin{equation*}
\Big \{ g^{(e)}:= \sum_{d \mid e} \xi_{e}(d) g^{\star} \vert_d(\tau) : e \mid \frac{D}{u} \Big \}
\end{equation*}
is an orthonormal basis for the space $\bigoplus_{d \mid \frac{D}{u}} g^{\star} \vert_d \cdot \mathbb{C}$. If $g$ is any member of this basis, then its Fourier coefficients satisfy the bound
\begin{equation} \label{specialfbd}
\sqrt{n} \rho_g(n) \ll (nD)^{\varepsilon} n^{\theta} (D,n)^{\frac{1}{2}-\theta} | \rho_{g^{\star}}(1) |.
\end{equation}
\end{lemma}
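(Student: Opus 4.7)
The plan is to establish this via the standard Gram--Schmidt construction on the oldform subspace $\bigoplus_{d \mid D/u} g^{\star}\vert_d \cdot \mathbb{C}$ of $\mathcal{A}_{\kappa}(D,\chi)$, together with a careful arithmetic inversion of the resulting Gram matrix. First I would reduce to prime powers: the key observation is that if $d_1, d_2 \mid D/u$ and we unfold $\langle g^{\star}\vert_{d_1}, g^{\star}\vert_{d_2}\rangle$ against the Eisenstein series in the Rankin--Selberg style, then set $e := d_1 d_2/(d_1,d_2)^2$, the inner product factors as $\langle g^{\star}, g^{\star}\rangle$ times an arithmetic function $\mathcal{M}(d_1,d_2)$ which is multiplicative in the prime decomposition of $D/u$. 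This reduces the entire problem to constructing an orthonormal basis inside $\bigoplus_{i=0}^{\nu} g^{\star}\vert_{p^i}\cdot \mathbb{C}$ for each prime power $p^{\nu} \| D/u$, and then taking tensor products.

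Next I would compute the Gram matrix at a prime $p$ explicitly using the Hecke relation \eqref{heckerel} and the coefficient formulas \eqref{rel1}--\eqref{rel3}, obtaining entries involving $\lambda_{g^{\star}}(p)$ and $\chi(p)/p$. The form of the inverse differs depending on whether $p \mid u$ (the squarefull case, in which the Hecke eigenvalue is constrained and $\mu_g(p^{\nu})$ is nonzero only for $\nu \leq 2$) or $p \nmid u$ (the squarefree case). In the squarefree case, the $2\times 2$ inversion produces exactly the coefficients $\xi'_{e_1}(d)$ in terms of $\mu(e_1/d)\lambda_g(e_1/d)$ normalised by the Rankin--Selberg arithmetic factor $r_g$; in the squarefull case, the $L$-function coefficients $\mu_g$ appear as the entries of the inverse matrix, giving rise to $\xi''_{e_2}(d)$. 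Combining via the multiplicative tensor structure $e = e_1 e_2$ and $d = (d,e_1)(d,e_2)$ yields the factorisation $\xi_e(d) = \xi'_{e_1}((d,e_1))\,\xi''_{e_2}((d,e_2))$ as stated, and the bound $\xi_e(d) \ll e^{\varepsilon}(e/d)^{\theta-1/2}$ follows from the Kim--Sarnak bound $|\lambda_g(n)| \leq n^{\theta+\varepsilon}$.

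For the Fourier coefficient bound \eqref{specialfbd}, I would expand
\[
\sqrt{n}\,\rho_{f^{(e)}}(n) = \sum_{\substack{d \mid e \\ d \mid n}} \xi_e(d)\,\sqrt{n/d}\,\rho_{g^{\star}}(n/d),
\]
and then use the multiplicative Hecke relations to write $\sqrt{n/d}\,\rho_{g^{\star}}(n/d) = \rho_{g^{\star}}(1)\,\lambda_{g^{\star}}(n/d)$ when $(n/d, u) = 1$, handling the divisibility contribution with $u$ by the Atkin--Lehner relations at ramified primes. Again by multiplicativity one reduces to a prime-by-prime estimate, where the bound $(n,D)^{1/2-\theta} n^{\theta}$ emerges by bounding each local factor $\sum_{j} \xi_{p^{\nu_p(e)}}(p^j)\lambda_{g^{\star}}(p^{\nu_p(n)-j})$ using the estimates on $\xi_e$ from the previous step.

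The principal obstacle is the bookkeeping in the second step: the Gram matrix inversion at primes dividing $u$ versus those not dividing $u$ requires two genuinely different computations (reflecting the distinction between the degree two and degree one local $L$-factors), and one must verify that the resulting product formula for $\xi_e(d)$ and the bound on Fourier coefficients remain uniform across all primes. The rest of the argument consists of standard manipulations with Hecke eigenvalue bounds and multiplicative functions.
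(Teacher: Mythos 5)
Your strategy --- unfold the Gram matrix entries $\langle g^{\star}|_{d_1}, g^{\star}|_{d_2}\rangle$ via Rankin--Selberg, exploit multiplicativity to localize at prime powers, invert the local Gram matrices to produce the $\xi_e(d)$, and deduce the Fourier coefficient bound from the Hecke relations --- is precisely the structure of the argument in Blomer--Mili\'{c}evi\'{c}, which the paper cites and declares carries over ``verbatim with the replacement of the principal character modulo $D$ by $\chi$.'' At that level your plan is the right one.

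The concrete error is in your characterization of the two cases. You assert that the $\xi'/\xi''$ dichotomy is governed by $p \nmid u$ (your ``squarefree case'') versus $p \mid u$ (your ``squarefull case''). That is not what the decomposition $e = e_1 e_2$ means: here $e_1$ is the squarefree part and $e_2$ the squarefull part of the \emph{index} $e \mid D/u$, the decomposition is applied to each $e$ separately, and it has nothing to do with whether $p$ divides the level $u$ of the newform. The relevant dichotomy is $\ord_p(e) = 1$ (the prime lands in $e_1$, and the local Gram--Schmidt inversion produces the $\mu(e/d)\lambda_g(e/d)$ shape of $\xi'_{e_1}$) versus $\ord_p(e) \geq 2$ (the prime lands in $e_2$, and the inversion produces the $\mu_g(e/d)$ shape of $\xi''_{e_2}$). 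In particular a single prime $p$ with $\ord_p(D/u) = \nu \geq 2$ contributes to the $\xi'$-side for those $e$ with $\ord_p(e) = 1$ and to the $\xi''$-side for those $e$ with $\ord_p(e) \geq 2$. Note also that your parenthetical justification --- that ``$\mu_g(p^{\nu})$ is nonzero only for $\nu \leq 2$'' --- holds by definition for \emph{every} prime $p$, so it cannot single out the primes dividing $u$. Carrying out the local $(\nu+1) \times (\nu+1)$ inversion will show that the two shapes of coefficient appear according to the exponent of $p$ in $e$, not according to the level of $g^{\star}$; this misidentification needs to be repaired before your tensor-product step, and hence the estimate \eqref{specialfbd}, can be carried through correctly.
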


Note that one can also see \cite[Theorem~3.2]{SPY} for the above lemma.

\begin{proof}
The proof is verbatim \cite[Lemma~9]{BM1} with the replacement of the principal character modulo $D$ with $\chi$.
\end{proof}

Elements in bases $g \in \mathcal{B}_{\kappa}(D,\chi)$ (or $g \in \mathcal{H}_{\ell}(D,\chi)$) have multiplicative-like properties. Given $r \in \mathbb{N}$, if $m=rm^{\prime} \in \mathbb{N}$ with $(m^{\prime},r)=1$, then by \cite[pg.~74]{BHM} we have
\begin{equation} \label{multbase1}
\sqrt{m} \rho_g(m)=\sum_{d \mid (D, (r/(r,D))} \mu(d) \chi(d) \lambda_{g^{\star}} \Big( \frac{r}{d (r,D)} \Big) \Big( \frac{(r,D) m^{\prime}}{d} \Big)^{\frac{1}{2}} \rho_g \Big( \frac{(r,D) m^{\prime}}{d} \Big),
\end{equation}
where $g^{\star}$ is the underlying newform. In particular, if $(r,D)=1$ we have 
\begin{equation} \label{multbase2}
\sqrt{m} \rho_g(m)=\lambda_{g^{\star}}(r) \sqrt{m^{\prime}} \rho_g(m^{\prime})
\end{equation}
Moreover if $g^{\star}$ satisfies the Ramanujan--Petersson conjecture (i.e. integral weight holomorphic eigenforms) and $a_m$ is any finite sequence of complex numbers then 
\begin{equation} \label{ramult}
\Big | \sum_m a_m \sqrt{m} \rho_g(m) \Big |^2 \leq \tau(r)^2 \sum_{d \mid (r,D)} \Big | \sum_{m^{\prime}} a_{rm^{\prime}} \sqrt{d m^{\prime}} \rho_g(d m^{\prime}) \Big |^2. 
\end{equation} 

\subsection{Kuznetsov--Proskurin formula and spectral inequalities}
 Let $\chi$ be a character modulo $D$ with $\chi(-1)=1$. Let $\phi: [0,\infty) \rightarrow \mathbb{C}$ have continuous derivatives up to third order and satisfy 
\begin{equation*}
\phi(0)=\phi^{\prime}(0)=0, \quad \phi^{j}(x) \ll (1+x)^{-3} \quad \text{for} \quad j=1,2,3.
\end{equation*}
Define the transforms 
\begin{align*}
\dot{\phi}(\ell)&:=4 i^{\ell}  \int_{0}^{\infty} \phi(x) J_{\ell-1}(x) \frac{dx}{x}, \\
\widetilde{\phi}(t)&:=2 \pi i \int_{0}^{\infty} \phi(x) \frac{J_{2it}(x)-J_{-2it}(x)}{\text{sinh}(\pi t)} \frac{dx}{x}, \\
\check{\phi}(t)&:=8 \int_{0}^{\infty} \phi(x) \cosh(\pi t) K_{2it}(x) \frac{dx}{x}.
\end{align*}
These transforms are normalised like those occurring in \cite[Section~5]{BM1} and \cite{Dra}. For $D \mid c$ and $m,n \in \mathbb{Z}$, define the Kloosterman sum (at $\infty \infty$) by 
\begin{equation*}
K(m,n,c,\chi):=\sideset{}{^*} \sum_{d \hspace{-0.15cm} \pmod{c}}  \overline{\chi}(d) e \Big( \frac{md+n \overline{d}}{c} \Big),
\end{equation*}
where the superscript $*$ denotes the condition that $(d,c)=1$.
\begin{lemma} \emph{\cite[Lemma~4.5]{Dra}} \emph{ and } \emph{\cite[Lemma~10]{BM1}}.  \label{kuz}
Let $\phi$ be as above and $\chi$ be a character modulo $D$ with $\chi(-1)=1$. For $\ell \geq 2$ and $\ell \equiv 0 \pmod{2}$, let  $\mathcal{B}_0(D,\chi)$ (resp. $\mathcal{H}_{\ell}(D,\chi)$) denote the orthonormal basis of Maass cusp forms (resp. holomorphic cusp forms) given above. Recalling the notations \eqref{maassfourier}--\eqref{holofourier}, for $m,n \in \mathbb{N}$ we have
\begin{multline*}
\sum_{D \mid c} \frac{1}{c} K (m,n,c, \chi) \phi \Big( \frac{4 \pi \sqrt{mn}}{c} \Big)=\sum_{\substack{\ell \geq 2 \\ \ell \equiv 0 \hspace{-0.15cm} \pmod{2}}} \sum_{g \in \mathcal{H}_{\ell}(D,\chi)} \Gamma(\ell) \dot{\phi}(\ell) \sqrt{mn} \hspace{0.1cm} \overline{\rho_g(m)} \rho_g(n) \\
+\sum_{g \in \mathcal{B}_{0}(D,\chi)} \widetilde{\phi}(t_g) \frac{\sqrt{mn}}{\cosh(\pi t_g)} \overline{\rho_g(m)} \rho_g(n)+\frac{1}{4 \pi} \sum_{\mathfrak{a} \emph{ sing.}} \sqrt{mn} \int_{-\infty}^{\infty} \frac{\widetilde{\phi}(t)}{\emph{cosh}(\pi t)} \overline{\rho_{\mathfrak{a}}(m,t)} \rho_{\mathfrak{a}}(n,t) dt 
\end{multline*}
and 
\begin{multline*}
\sum_{D \mid c} \frac{1}{c} K (m,-n,c, \chi) \phi \Big( \frac{4 \pi \sqrt{mn}}{c} \Big)=\sum_{g \in \mathcal{B}_{0}(D,\chi)} \check{\phi}(t_g) \frac{\sqrt{mn}}{\cosh(\pi t_g)} \overline{\rho_g(m)} \rho_g(-n)\\
+\frac{1}{4 \pi} \sum_{\mathfrak{a} \emph{ sing.}} \sqrt{mn} \int_{-\infty}^{\infty} \frac{\check{\phi}(t)}{\emph{cosh}(\pi t)} \overline{\rho_{\mathfrak{a}}(m,t)} \rho_{\mathfrak{a}}(-n,t) dt 
\end{multline*}
\end{lemma}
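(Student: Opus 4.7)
The plan is to follow the standard Poincaré-series derivation of the Kuznetsov--Proskurin formula, adapted to nontrivial nebentypus $\chi$ with $\chi(-1)=1$. First I would fix a smooth test function $\psi$ of rapid decay and, for each $m\ge 1$, introduce the Poincaré series
$$P_m(\tau,\psi):=\sum_{\gamma\in\Gamma_\infty\backslash\Gamma_0(D)}\overline{\chi}(\gamma)\,\psi(\Im\gamma\tau)\,e(m\,\Re\gamma\tau).$$
A Bruhat-style decomposition of $\Gamma_\infty\backslash\Gamma_0(D)/\Gamma_\infty$ into the identity coset together with cosets parametrised by lower-left entry $c$ with $D\mid c$, followed by Poisson summation in the upper-right entry, yields the Fourier expansion of $P_m$ at $\infty$ whose $n$-th coefficient is expressed as a sum of the form
$$\sum_{\substack{c\ge 1\\ D\mid c}}\frac{1}{c}\,S(m,n,c,\chi)\,\mathcal{K}_\psi\!\Bigl(\frac{4\pi\sqrt{|mn|}}{c},\operatorname{sgn}(n)\Bigr),$$
where $\mathcal{K}_\psi$ is a Bessel-type integral transform of $\psi$ whose kernel is a $J$-Bessel function when $n>0$ and a $K$-Bessel function when $n<0$. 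This is the geometric side of the identity.

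Second, I would compute the spectral decomposition of the $n$-th Fourier coefficient of $P_m$ by unfolding $\langle P_m,g\rangle$ against each element $g$ of the basis of $L^2(\Gamma_0(D)\backslash\mathbb{H},\chi)$. For a Maass cusp form the inner product collapses to a Mellin-type transform of $\psi$ against the Whittaker function, producing the factor $\sqrt{mn}\,\overline{\rho_g(m)}\rho_g(n)/\cosh(\pi t_g)$ multiplied by an integral transform of $\psi$; for a holomorphic cusp form of weight $\ell$ an analogous unfolding against $(4\pi n)^{\ell/2}y^{\ell/2}e^{-2\pi ny}$ produces $\Gamma(\ell)\sqrt{mn}\,\overline{\rho_g(m)}\rho_g(n)$ against a transform involving $J_{\ell-1}$; the Eisenstein contribution is handled identically with the continuous parameter $t$ replacing the discrete parameter $t_g$. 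Equating the geometric and spectral expressions for the $n$-th Fourier coefficient (taking $n>0$ for the first identity of the lemma and $n<0$ for the second, so that the holomorphic forms drop out in the opposite-sign case, since their Fourier expansion is supported on positive integers) yields the two claimed formulas for the specific class of $\psi$-arising test functions.

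Finally, to pass from this constructed class to the general test function $\phi$ in the statement, I would invoke the inverse Kontorovich--Lebedev and Neumann/Sears--Titchmarsh transforms so that $\dot\phi(\ell)$, $\widetilde\phi(t)$, and $\check\phi(t)$ appear as the coefficients in an explicit superposition representing $\phi$. The main technical obstacle is the convergence/uniformity issue: one must justify the interchange of the sum over $c$ (absolute convergence is borderline because the Weil bound gives $S(m,n,c,\chi)\ll c^{1/2+\varepsilon}$ while the Bessel kernels decay slowly) with the spectral summation, and one must likewise ensure convergence of the spectral side near $t=0$ and at infinity. The three hypotheses $\phi(0)=\phi'(0)=0$, $\phi(x)\ll x^{-1-\varepsilon}$, and the derivative bounds are precisely engineered to guarantee the required absolute convergence of $\dot\phi$, $\widetilde\phi$, $\check\phi$ as well as the Kloosterman side; once these analytic subtleties are discharged, as carried out in detail in \cite{Dra} and \cite{BM1}, the formulas in the lemma follow.
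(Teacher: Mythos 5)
The paper does not actually prove Lemma~\ref{kuz}; it quotes the statement from \cite[Lemma~4.5]{Dra} and \cite[Lemma~10]{BM1} and uses it as a black box. Your sketch reproduces, at a high level, the standard Poincar\'e-series derivation of the Kuznetsov--Proskurin formula that those references (and ultimately Deshouillers--Iwaniec and Proskurin) carry out: geometric side from the Bruhat decomposition and Poisson summation, spectral side from unfolding $\langle P_m,g\rangle$ over the cuspidal and Eisenstein bases of $\mathcal{L}_0(D,\chi)$, absence of the holomorphic contribution in the opposite-sign case, and passage to general $\phi$ via the Sears--Titchmarsh and Kontorovich--Lebedev inversions, with the stated decay and vanishing conditions on $\phi$ guaranteeing convergence of both sides. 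This is essentially the same route as in the cited sources, so there is nothing to object to beyond the fact that every step is deferred to \cite{Dra} and \cite{BM1} --- which is exactly what the paper itself does.

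One small point worth flagging if you were to write this out in full: the unfolding step needs to account for the exceptional spectrum (Maass forms with $\lambda_g<1/4$, i.e.\ $t_g$ purely imaginary in $[-i\theta,i\theta]$), since $\widetilde\phi(t)$ and $\check\phi(t)$ must be analytic in a horizontal strip of width $>2\theta$ for those terms to make sense; this is implicit in the hypotheses on $\phi$ but is a place where carelessness would break the argument.
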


\subsection{Spectral large sieve and multiplicative sequences I}
In this section we record a spectral large sieve inequality for coefficients of Maass forms supported on sequences with multiplicative structure. This approach is originally due to Blomer and Mili\'{c}evi\'{c} \cite[Theorem~13]{BM1}, and is crucial in avoiding the Ramanujan--Petersson conjecture in their treatment of shifted convolution sums. We also record some standard spectral tools in enough generality for our purpose.

\begin{lemma} \label{weilsq}
Suppose $\chi$ is a character of modulus $D$ with $\chi(-1)=(-1)^{\kappa}$ and conductor $D^{\star}_{\chi}:=N$ or $4N$, where $N$ is odd and squarefree. Suppose $c \in \mathbb{N}$ such that $D \mid c$. Then for $a,b \in \mathbb{N}$ we have
\begin{equation} 
|K(a,b,c,\chi)| \leq  16 \tau(c) (a,b,c)^{\frac{1}{2}} c^{\frac{1}{2}}.
\end{equation}
\begin{proof}
This bound follows from \cite[Cor.~9.14, Prop.~9.4, Prop.~9.7 and Prop.~9.8]{KL}.
\end{proof}
\end{lemma}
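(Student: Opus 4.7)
The plan is to deduce this bound from the local bounds for twisted Kloosterman sums at prime power moduli, combined via twisted multiplicativity. First I would write $c = \prod_{p} p^{\alpha_p}$ and use twisted multiplicativity (cf.\ Lemma \ref{twistlem} for the half-integral analogue, or the integral weight version in \cite[Prop.~9.8]{KL}) to factor
\begin{equation*}
S(a,b,c,\chi) = \prod_{p^{\alpha_p} \| c} S\bigl(a \overline{r_p}^{\,2},\, b,\, p^{\alpha_p}, \chi_p\bigr),
\end{equation*}
where $r_p := c/p^{\alpha_p}$ and $\chi = \prod_p \chi_p$ is the factorization of $\chi$ into local components. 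The key point is that the hypothesis on the conductor $D^\star_\chi \in \{N, 4N\}$ with $N$ odd and squarefree forces each local component $\chi_p$ to have conductor dividing $p$ (for $p$ odd) or dividing $4$ (for $p = 2$).

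Next I would bound each local factor. There are a few cases to distinguish at each prime $p$, according to whether $\alpha_p = 1$ or $\alpha_p \geq 2$, and whether the local conductor is trivial or non-trivial:
\begin{itemize}
\item If $\chi_p$ is trivial, then for $\alpha_p = 1$ this is an ordinary Kloosterman sum and Weil's bound \cite[Cor.~9.14]{KL} gives $|S(\cdot,\cdot,p,\mathbf{1})| \leq 2 (a,b,p)^{1/2} p^{1/2}$. For $\alpha_p \geq 2$ the analogous bound with constant $2$ follows from \cite[Prop.~9.4]{KL}.
\item If $\chi_p$ has conductor equal to $p^{\alpha_p}$ (the ``primitive'' local case), the relevant Sali\'e-type / Gauss sum evaluation of \cite[Prop.~9.7]{KL} gives a sharp bound of the same shape.
\item The mixed case where $\chi_p$ has conductor strictly dividing $p^{\alpha_p}$ (which can only happen if $p=2$ and $\alpha_p \geq 3$, or if $\chi_p$ is trivial at a prime $p$ with $\alpha_p \geq 2$) is handled by the combination of \cite[Prop.~9.4 and Prop.~9.7]{KL}.
\end{itemize}
Each local factor is at most $2 (a,b,p^{\alpha_p})^{1/2} p^{\alpha_p/2}$, with at most an additional benign factor to account for the local part of the argument shifts by $\overline{r_p}^{\,2}$ (which does not alter the gcd with $p^{\alpha_p}$).

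Finally I would multiply the local bounds. Since $(a,b,c) = \prod_p (a,b,p^{\alpha_p})$ and $c^{1/2} = \prod_p p^{\alpha_p/2}$, the product telescopes to $(a,b,c)^{1/2} c^{1/2}$ times a constant of the form $2^{\omega(c)}$, which is absorbed into $\tau(c)$ (with at most a factor $16$ of slack from the prime $p=2$, where one may have $\alpha_2$ large and the local constant inflated to handle the full $4N$ case). The main (minor) obstacle is purely bookkeeping at the prime $2$: one must verify that the local Kloosterman sum modulo $2^{\alpha_2}$ with a character of conductor dividing $4$ still obeys the Weil-type bound with a uniformly bounded constant, which is precisely the content of the four propositions from \cite{KL} cited above.
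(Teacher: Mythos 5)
Your proposal is correct and takes the same approach as the paper, which itself gives no argument beyond citing the local Weil/Sali\'e bounds and twisted multiplicativity from \cite{KL}; your write-up simply reconstructs the bookkeeping those citations entail. One small slip worth noting: the twisted multiplicativity shift should appear linearly on \emph{both} arguments, i.e.\ $S(a\overline{r_p},\,b\overline{r_p},\,p^{\alpha_p},\chi_p)$ rather than $S(a\overline{r_p}^{\,2},\,b,\,p^{\alpha_p},\chi_p)$ (compare the paper's Lemma \ref{twistlem}); this is harmless for the final estimate since $\overline{r_p}$ is a unit mod $p^{\alpha_p}$ and so does not alter the gcd $(a,b,p^{\alpha_p})$.
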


We now present the standard spectral large sieve.
\begin{lemma} \label{largesieve}
Let $\chi$ be as in Lemma \ref{weilsq} with $\kappa=0$ and $\{a_m \}$ be a sequence of complex numbers. Suppose $T \geq 1$ and $M \geq \frac{1}{2}$. Then each of the three quantities 
\begin{equation*} 
\sum_{\substack{\kappa<\ell \leq T \\ \ell \equiv \kappa \hspace{-0.15cm} \pmod{2}}} \Gamma(\ell) \sum_{g \in \mathcal{H}_{\ell}(D,\chi)} \Big | \sum_{M<m \leq 2M} a_m \sqrt{m} \rho_g(m) \Big |^2, 
\end{equation*}
\begin{equation*}
\sum_{\substack{g \in \mathcal{B}_{0}(D,\chi) \\ |t_g| \leq T }} \frac{1}{\cosh(\pi t_g)} \Big | \sum_{M<m \leq 2M} a_m \sqrt{m} \rho_g(\pm m) \Big |^2,
\end{equation*}
\begin{equation*}
\sum_{\mathfrak{a} \emph{ sing.}} \int_{-T}^{T} \frac{1}{\cosh(\pi t)} \Big | \sum_{M<m \leq 2M} a_m \sqrt{m} \rho_{\mathfrak{a}}(\pm m,t) \Big |^2,
\end{equation*}
is bounded up to a constant depending on $\varepsilon$, by 
\begin{equation*}
\Big(T^2+\frac{M^{1+\varepsilon}}{D} \Big) \sum_{M<m \leq 2M} |a_m|^2 .  
\end{equation*}
\end{lemma}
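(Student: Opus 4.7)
The plan is to follow the classical Deshouillers--Iwaniec spectral large sieve argument, adapted to allow the nebentypus $\chi$. Because Weil's bound (Lemma \ref{weilsq}) and the Kuznetsov--Proskurin formula (Lemma \ref{kuz}) are already in place for general $\chi$ in the required form, the generalisation from trivial nebentypus is essentially cosmetic. I will treat the three inequalities in parallel; duality will then let us reduce matters to a single off-diagonal estimate for a sum of Kloosterman sums.

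\textbf{Step 1: Duality.} Write $S_M := \sum_{M < m \leq 2M} a_m \sqrt{m}\, \rho_g(m)$ and consider the quadratic form
\begin{equation*}
\mathcal{Q}_M(\mathbf{a}) = \sum_{g} w_g\, |S_M|^2,
\end{equation*}
where $w_g = 1/\cosh(\pi t_g)$ in the Maass case, $w_g = \Gamma(\ell)$ weighted by a smooth cutoff on $\ell \leq T$ in the holomorphic case, and analogously for Eisenstein series. The dual inequality asserts that for any complex sequence $(b_g)$,
\begin{equation*}
\sum_{M < m \leq 2M} \Bigl| \sum_g b_g\, \sqrt{m}\, \rho_g(m) \Bigr|^2 \ll_\varepsilon \Bigl( T^2 + \frac{M^{1+\varepsilon}}{D} \Bigr) \sum_g w_g\, |b_g|^2.
\end{equation*}
By the duality principle it suffices to prove this dual form (combined with its analogues for negative Fourier indices and Eisenstein coefficients).

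\textbf{Step 2: Insertion of a positive test function.} Choose a fixed smooth, compactly supported non-negative $\phi: [0,\infty) \to \mathbb{R}$ satisfying the regularity required by Lemma \ref{kuz}, and whose transform $\widetilde{\phi}$ (respectively $\dot{\phi}$, $\check{\phi}$) majorises $1$ on the spectral window of interest. The standard choice is built from a scaled $(J_{2k}\text{-like})$ seed so that $\widetilde{\phi}(t) \gg 1$ for $|t| \leq T$ and decays rapidly outside, uniformly in $T$; cf.\ the original Deshouillers--Iwaniec construction. Applying Lemma \ref{kuz} with appropriately scaled $\phi(x) = \phi_0(Tx/(4\pi\sqrt{mn}))$ (or the corresponding adjustment) produces
\begin{equation*}
\sum_g w_g\, \widetilde{\phi}(t_g)\, \sqrt{mn}\, \overline{\rho_g(m)} \rho_g(n) = \sum_{D \mid c} \frac{1}{c}\, S(m,n,c,\chi)\, \phi\!\left( \frac{4\pi \sqrt{mn}}{c} \right)
\end{equation*}
(plus harmless secondary spectral terms that are absorbed by positivity of $\widetilde{\phi}$).

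\textbf{Step 3: Bounding the off-diagonal.} Multiplying by $a_m \overline{a_n}$, summing over $m,n \in (M, 2M]$, and using the positivity of $\widetilde{\phi}$ on the spectral window reduces the desired bound to
\begin{equation*}
\sum_{m,n} a_m \overline{a_n} \sum_{D \mid c} \frac{1}{c}\, S(m,n,c,\chi)\, \phi\!\left( \frac{4\pi \sqrt{mn}}{c} \right) \ll_\varepsilon \Bigl( T^2 + \frac{M^{1+\varepsilon}}{D} \Bigr) \sum_m |a_m|^2.
\end{equation*}
The diagonal $m=n$ contributes the main term $T^2 \sum |a_m|^2$ because the trivial character sum gives $S(m,m,c,\chi) \sim \phi(p)$-sized contributions and the normalisation of $\phi$ is chosen to produce exactly $T^2$. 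For $m \neq n$, insert Weil's bound
\begin{equation*}
|S(m,n,c,\chi)| \leq 16\, \tau(c)\, (m,n,c)^{1/2} c^{1/2}
\end{equation*}
from Lemma \ref{weilsq}, apply Cauchy--Schwarz in $m,n$, and then sum over $c$. The condition $D \mid c$ forces $c \geq D$, which is precisely the source of the $1/D$ saving in the term $M^{1+\varepsilon}/D$. The Bessel transform decay of $\phi$ truncates $c$ effectively at $c \asymp \sqrt{mn}/T$, giving the remaining powers of $M$ and $T$ after the standard divisor-sum cleanup.

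\textbf{Main obstacle.} The delicate point is engineering the test function $\phi$ and keeping uniform control over all three spectral transforms $\widetilde{\phi}, \check{\phi}, \dot{\phi}$, so that a single choice simultaneously majorises the Maass, even-sign, odd-sign, and holomorphic weights on the desired spectral window. This is exactly the construction carried out by Deshouillers--Iwaniec (and revisited in \cite{BHM,BM1}); for us the only additional verification is that inserting the nebentypus $\chi$ preserves the arithmetic of the Kloosterman sums well enough for Lemma \ref{weilsq} to apply with the same exponent $1/2$. Since $\chi$ factors multiplicatively through the modulus and Weil's bound holds uniformly in $\chi$ (provided the conductor is squarefree up to a factor of $4$, as assumed), the same contour manipulations and divisor-sum estimates go through \emph{verbatim}, and we obtain the stated bound.
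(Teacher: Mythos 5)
Your sketch reproves the spectral large sieve from scratch in the Deshouillers--Iwaniec style, whereas the paper's actual proof is much shorter: it simply invokes the already-established result \cite[Proposition~4.7]{Dra}, which gives the same three bounds but with an extra factor $(D^{\star}_{\chi})^{1/2}$ multiplying $M^{1+\varepsilon}/D$, and observes that this factor can be removed. The reason it can be removed is exactly the point you flagged at the end: the bound for Kloosterman sums in \cite[Theorem~9.2]{KL} used in \cite[(4.20)]{Dra} is replaced by the cleaner Lemma~\ref{weilsq}, which is available because the conductor $D^{\star}_{\chi}$ is assumed to be $N$ or $4N$ with $N$ odd and squarefree. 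So the paper delegates the entire duality/test-function/Kuznetsov machinery to \cite{Dra} and patches one inequality; you reconstruct that machinery. Both routes are legitimate, and you do correctly isolate the single new ingredient (the sharp Weil bound without the conductor power). However, your reconstruction has several hand-wavy spots that the paper avoids by citation: the existence of a single test function whose transforms simultaneously majorise the holomorphic, Maass, and Eisenstein windows across both signs is delicate (it is precisely the Deshouillers--Iwaniec ``nonnegative test function'' problem); and your claim that the secondary spectral terms are ``absorbed by positivity'' only works if you verify that $\check{\phi}$, $\dot{\phi}$ are all nonnegative on the relevant ranges for the same $\phi$, which you do not check. If you wanted a self-contained writeup you would need to supply that construction; otherwise citing \cite{Dra} and swapping in Lemma~\ref{weilsq}, as the paper does, is both cleaner and safer.
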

\begin{proof}
Observe that this result has been proved in \cite[Proposition~4.7]{Dra}, except that the bound that appears there is 
\begin{equation*}
\Big(T^2+ (D^{\star}_{\chi})^{\frac{1}{2}} \frac{M^{1+\varepsilon}}{D} \Big) \sum_{M<m \leq 2M} |a_m|^2.
\end{equation*}
The appearance of the conductor in \cite{Dra} is due to the general estimate for Kloosterman sums in \cite[Theorem~9.2]{KL}. Since the conductor is either $D^{\star}_{\chi}=N$ or $4N$ with $N$ odd and squarefree, we can apply Lemma \ref{weilsq} to remove the factor of $(D^{\star}_{\chi})^{\frac{1}{2}}$ in \cite[(4.20)]{Dra}. The rest of the proof is verbatim the same in \cite[Proposition~4.7]{Dra}.
\end{proof}

\begin{lemma} \label{Mot}
Let $\chi$ be as in Lemma \ref{weilsq} with $\kappa=0$ and $m \in \mathbb{N}$. Then
\begin{equation*}
\sum_{\substack{|t_g| \leq T \\ g \in \mathcal{B}_0(D,\chi)}} \frac{1}{\emph{cosh}(\pi t_g)} | \sqrt{m} \rho_g(m)|^2  \ll_{\varepsilon} \Big( T^2 +\frac{(D,m)^{\frac{1}{2}} m^{\frac{1}{2}}}{D} \Big) (Tm)^{\varepsilon}.
\end{equation*}
\end{lemma}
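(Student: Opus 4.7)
The inequality is a refinement of the one-coefficient specialisation of the spectral large sieve (Lemma \ref{largesieve}), where the naive bound $T^{2}+m^{1+\varepsilon}/D$ obtained by taking $a_n=\mathbf 1_{n=m}$ is sharpened to $T^{2}+(D,m)^{1/2}m^{1/2}/D$. The plan is to follow the Deshouillers--Iwaniec amplification template (compare \cite[\S5]{BM1} and \cite[Lemma~9]{BM1}), combining Lemma \ref{largesieve} with the newform decomposition \eqref{newform} and the Hecke multiplicativity \eqref{multbase1}--\eqref{multbase2}.

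\smallskip
\emph{Step 1 (Reduction to newforms).} I would first use \eqref{newform} to organise $\mathcal{B}_0(D,\chi)$ into packets indexed by newforms $g^{\star}$ of levels $u\mid D$, each orthonormalised into $\{f^{(e)}\}_{e\mid D/u}$ via Lemma 9. Factor $m=m_{1}m_{2}$ with $m_{1}=(m,D^{\infty})$ and $(m_{2},D)=1$, so that $(D,m)=(D,m_{1})$. For every basis element $g$ built from $g^{\star}$, the multiplicativity \eqref{multbase2} yields
\[
\sqrt{m}\,\rho_{g}(m)=\lambda_{g^{\star}}(m_{2})\,\sqrt{m_{1}}\,\rho_{g}(m_{1}),
\]
so the $m_{2}$-factor is isolated and controlled purely by Hecke eigenvalues of the newform.

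\smallskip
\emph{Step 2 (Amplification by primes).} Introduce a parameter $P\geq (DTm)^{\varepsilon}$ and a smooth positive weight $\psi$ supported on primes $p\in[P,2P]$ with $(p,Dm)=1$. The Rankin--Selberg bound for $g^{\star}$ gives
\[
\sum_{p}\psi(p)\,|\lambda_{g^{\star}}(p)|^{2}\;\gg\;\frac{P}{\log P},
\]
uniformly for newforms with $|t_{g^{\star}}|\leq T$. Since $(p,m_{2}D)=1$ we have $\lambda_{g^{\star}}(m_{2}p)=\lambda_{g^{\star}}(m_{2})\lambda_{g^{\star}}(p)$, and one more application of \eqref{multbase2} transforms the amplified bound into
\[
\sum_{g}\frac{|\sqrt{m}\,\rho_{g}(m)|^{2}}{\cosh(\pi t_{g})}\;\ll\;\frac{\log P}{P}\sum_{p}\psi(p)\sum_{g}\frac{|\sqrt{mp}\,\rho_{g}(mp)|^{2}}{\cosh(\pi t_{g})}.
\]

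\smallskip
\emph{Step 3 (Spectral large sieve and combinatorics of divisors).} Apply Lemma \ref{largesieve} with the one-term sequence $a_{n}=\mathbf 1_{n=mp}$ (valid since $M\geq\tfrac12$) to bound each inner sum by $T^{2}+(mp)^{1+\varepsilon}/D$. Summing over $p\sim P$ and dividing by $P$ produces the shape $T^{2}+mP^{\varepsilon}/D$, but the refinement to $(D,m)^{1/2}m^{1/2}/D$ is extracted by amplifying only on the coprime-to-$D$ part: the factor $|\lambda_{g^{\star}}(m_{2})|^{2}$ is absorbed by the Rankin--Selberg average, so effectively only the $m_{1}$-part enters the large sieve bound, and one further splits $m_{1}$ according to $(D,m)$ using the bookkeeping afforded by Lemma 9 and \eqref{multbase1} (whose oldform coefficients satisfy \eqref{specialfbd}, producing the factor $(D,m)^{1/2-\theta}$ which, combined with the amplified average, yields the desired $(D,m)^{1/2}m^{1/2}/D$ after optimisation in $P$).

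\smallskip
\emph{Main obstacle.} The main technical difficulty is carrying out the bookkeeping in Step 3 cleanly: one must track how the divisors of $D/u$ interact with the divisors of $m_{1}$ through the orthonormalisation $\xi_{e}(d)$ of Lemma 9, and verify that the Rankin--Selberg average over the amplifier precisely cancels the pointwise size of $|\lambda_{g^{\star}}(m_{2})|^{2}$ without losing the square-root improvement $(D,m)^{1/2}m^{1/2}$ over the trivial $m$. This cancellation is what converts the standard large sieve exponent $m/D$ into the sharper $(D,m)^{1/2}m^{1/2}/D$ in the final bound.
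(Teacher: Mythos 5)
The paper's proof is a direct adaptation of Motohashi's Lemma~2.4 from his book \cite{M1}: it begins with the pre--Kuznetsov formula of \cite[Lemma~3]{PR}, replaces the Kloosterman sums there by the twisted sums $S(m,m,c,\chi)$ together with the extra condition $D\mid c$, and then invokes the Weil-type bound of Lemma~\ref{weilsq} for those twisted sums. The factor $(D,m)^{1/2}m^{1/2}/D$ comes from the square-root cancellation in the Kloosterman sums together with the fact that the modulus $c$ runs over multiples of $D$ in an interval of length $\asymp m/T$.

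Your proposal takes a different and, unfortunately, incorrect route. The amplification step runs in the wrong direction. After writing
$\sum_{g}\frac{|\sqrt{m}\rho_{g}(m)|^{2}}{\cosh(\pi t_{g})}\ll \frac{\log P}{P}\sum_{p\sim P}\psi(p)\sum_{g}\frac{|\sqrt{mp}\rho_{g}(mp)|^{2}}{\cosh(\pi t_{g})}$
and applying Lemma~\ref{largesieve} pointwise with the one-term sequence supported at $n=mp$, each inner sum is $\ll T^{2}+(mp)^{1+\varepsilon}/D$, and summing over $p\sim P$ and dividing by $P$ gives
$T^{2}+m^{1+\varepsilon}P/D$ (up to $\log$'s), not $T^{2}+mP^{\varepsilon}/D$ as you assert: the extra factor $P$ does not cancel. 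So the amplifier strictly \emph{worsens} the trivial large-sieve estimate $T^{2}+m^{1+\varepsilon}/D$. This is the generic behaviour: amplification is a device for extracting a strong pointwise bound on a single $|\rho_{g_{0}}(m)|^{2}$ from an already-good estimate of the full spectral sum; it cannot upgrade the estimate for the full spectral sum itself, because the Rankin--Selberg lower bound for the amplifier is matched (not beaten) by the corresponding growth of the large-sieve length. The claim in Step~3 that the Rankin--Selberg average ``absorbs'' $|\lambda_{g^{\star}}(m_{2})|^{2}$ so that ``only the $m_{1}$-part enters'' is not correct either; nothing in the manipulation removes the $m_{2}$-dependence from the large-sieve length. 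Likewise, \eqref{specialfbd} is a pointwise bound that brings back a $\theta$-dependence, which is exactly what this lemma avoids. The true source of the square-root gain from $m/D$ to $(D,m)^{1/2}m^{1/2}/D$ is Weil's bound applied \emph{inside} the Kuznetsov/pre-Kuznetsov identity, where the Kloosterman sums are still visible — the spectral large sieve of Lemma~\ref{largesieve} has already averaged that cancellation away into a counting estimate and cannot recover it.
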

\begin{proof}
One starts with the ``pre--Kuznetsov formula" asserted in \cite[Lemma~3]{PR}. The proof is then verbatim that of \cite[Lemma 2.4]{M1}, except that in \cite[(2.7.3),(2.7.10)]{M1}, the 
Kloosterman sum is $S(m,m,c,\chi)$, and an extra divisibility condition $D \mid c$ is added to the summation. One then appeals to  Lemma \ref{weilsq} and modifies the last two displays in the proof accordingly.
\end{proof}

Blomer and Mili\'{c}evi\'{c} prove the following result for Maass forms using a fourth moment approach. 
The main feature is that it allows one to avoid invoking the Ramanujan--Petersson conjecture (see Remark \ref{naiveapproach} below).
\begin{theorem} \emph{\cite[Theorem~13]{BM1}} \label{BM1theorem}
Let $\chi$ be as in Lemma \ref{weilsq} with $\kappa=0$. Let $r \in \mathbb{N}$, $M,T \geq 1$, and let $\{\alpha_{m^{\prime}} \}_{M  \leq  m^{\prime} \leq 2M}$ be any sequence of complex numbers with $|\alpha_{m^{\prime}}| \leq 1$. Then 
\begin{align} \label{BM1ineq}
& \sum_{\substack{|t_g| \leq T \\ g \in \mathcal{B}_0(D,\chi)}} \frac{1}{\emph{cosh}(\pi t_g)} \Big | \sum_{\substack{M \leq m^{\prime} \leq 2M \\ (m^{\prime},rD)=1 }} \alpha_{m^{\prime}} \sqrt{r m^{\prime}} \rho_g(r m^{\prime})  \Big |^2 \nonumber \\
& \ll_{\varepsilon} (D r M T)^{\varepsilon} (r,D) \Big(T+\frac{r^{1/2}}{D^{1/2}} \Big) \Big(T+\frac{M}{D^{1/2}} \Big) M.
\end{align}
\end{theorem}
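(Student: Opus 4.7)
The plan is to combine the Atkin--Lehner decomposition \eqref{newform} with Hecke multiplicativity of the underlying newforms so as to reduce the bound to the spectral large sieve (Lemma \ref{largesieve}) applied at each newform level $u\mid D$. The key observation is that the coprimality $(m',rD)=1$ allows a clean factorization of the coefficient $\sqrt{rm'}\rho_g(rm')$, contracting the effective support length of the inner bilinear form from $rM$ down to $M$ modulo a Hecke eigenvalue factor depending only on the underlying newform.

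First, realize any $g\in\mathcal{B}_0(D,\chi)$ as an oldform $f^{(e)}=\sum_{d\mid e}\xi_e(d)g^\star|_d$ associated to a newform $g^\star\in\mathcal{B}_0(u,D,\widetilde{\chi})$ for some $u$ with $D^\star_\chi\mid u\mid D$ and some $e\mid D/u$. Since $(m',rD)=1$ and $e\mid D$, the condition $(m',e)=1$ is automatic and unfolding the Fourier coefficient gives
\[
\sqrt{rm'}\,\rho_{f^{(e)}}(rm')=\sum_{d\mid(r,e)}\xi_e(d)\sqrt{d}\cdot\sqrt{rm'/d}\,\rho_{g^\star}(rm'/d).
\]
Hecke multiplicativity of the newform (via \eqref{multbase2} at primes with $(r/d,u)=1$ and \eqref{multbase1} otherwise), together with $(m',r/d)=1$, then yields $\sqrt{rm'/d}\,\rho_{g^\star}(rm'/d)=\lambda_{g^\star}(r/d)\sqrt{m'}\,\rho_{g^\star}(m')$, so the inner sum in \eqref{BM1ineq} factors as
\[
\sum_{m'}\alpha_{m'}\sqrt{rm'}\,\rho_{f^{(e)}}(rm')=\Phi_{g^\star,e}(r)\cdot T_{g^\star},\qquad T_{g^\star}:=\sum_{m'}\alpha_{m'}\sqrt{m'}\rho_{g^\star}(m'),
\]
with $\Phi_{g^\star,e}(r):=\sum_{d\mid(r,e)}\xi_e(d)\sqrt{d}\,\lambda_{g^\star}(r/d)$ depending only on $g^\star$ and $r$.

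Substituting into the left-hand side of \eqref{BM1ineq} yields
\[
\sum_{D^\star_\chi\mid u\mid D}\sum_{\substack{g^\star\in\mathcal{B}_0(u,D,\widetilde{\chi})\\|t_{g^\star}|\leq T}}\frac{|T_{g^\star}|^2}{\cosh(\pi t_{g^\star})}\sum_{e\mid D/u}|\Phi_{g^\star,e}(r)|^2.
\]
Expanding the square and using $|\xi_e(d)|\ll e^\varepsilon(e/d)^{\theta-1/2}$ together with $\sum_{f\mid D/(ud)}f^{2\theta-1}\ll D^\varepsilon$ (valid since $2\theta<1$), the arithmetic $e$-sum is bounded by
\[
\sum_{e\mid D/u}|\Phi_{g^\star,e}(r)|^2\ll (rD)^\varepsilon\sum_{d\mid(r,D/u)}d\,|\lambda_{g^\star}(r/d)|^2.
\]
For the remaining spectral sum I would, for each $d$, either (i) absorb $\lambda_{g^\star}(r/d)$ back into the bilinear form via the identity $\lambda_{g^\star}(r/d)\sqrt{m'}\rho_{g^\star}(m')=\sqrt{(r/d)m'}\,\rho_{g^\star}((r/d)m')$ and apply Lemma \ref{largesieve} at level $u$ to the enlarged bilinear form of support length $(r/d)M$, or (ii) use the pointwise bound $|\lambda_{g^\star}(n)|\ll n^{\theta+\varepsilon}$ and apply Lemma \ref{largesieve} directly to $T_{g^\star}$. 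Passing to the $L^2(\Gamma_0(u)\backslash\mathbb{H})$-normalized newform via $[\Gamma_0(u):\Gamma_0(D)]\asymp D/u$, summing over $d\mid(r,D/u)$ and $u\mid D$, and using the identity $u(r,D/u)=(ur,D)$ together with $\sum_{u\mid D}(ur,D)\ll(r,D)D^{1+\varepsilon}$, produces the target bound.

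The main technical obstacle is balancing the two estimates for the spectral sum so as to recover exactly the factorization $(T+r^{1/2}/D^{1/2})(T+M/D^{1/2})M$ on the right-hand side of \eqref{BM1ineq}. Pure absorption is optimal for the diagonal $T^2$-contribution---producing the sharp $(r,D)MT^2$---but yields the overly large $rM^2/D$ at the $M^2$-cross-term; conversely, the pointwise Ramanujan bound handles the $M^2$-cross-term sharply (giving $(r,D)r^{1/2}M^2/D$ after combining with the $d$-sum via $r^{2\theta}(r,D)^{1-2\theta}\leq (r,D)r^{1/2}$ for $\theta\leq 1/4$) but inflates the $T^2$-diagonal. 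An interpolation, using the minimum of the two estimates on each $d$---equivalently, a dyadic split of the $d$-sum according to whether $(r/d)M/u$ dominates $T^2$---is required to produce the sharp cross-term factorization. A secondary subtlety is the correction at primes dividing both $r$ and the newform level $u$, handled via \eqref{multbase1}, which introduces $\chi(d)\mu(d)$-twists that must be tracked through the decomposition but do not affect the order of magnitude of the final estimate.
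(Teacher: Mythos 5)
Your setup is on target: the Atkin--Lehner decomposition via \eqref{newform}, the factorization $\sqrt{rm'}\rho_{f^{(e)}}(rm') = \Phi_{g^\star,e}(r)\cdot T_{g^\star}$ using $(m',rD)=1$ and the newform Hecke relations \eqref{multbase1}--\eqref{multbase2}, and the reduction to sums of the shape $\sum_g |\lambda_{g^\star}(r/d)|^2 |T_{g^\star}|^2 / \cosh(\pi t_{g^\star})$ over newforms of level $u$ all match the first part of Blomer--Mili\'{c}evi\'{c}'s argument, which the paper reproduces by swapping the principal character for $\chi$ throughout. The gap is in the final spectral estimate. Your two options, (i) re-absorbing $\lambda_{g^\star}(r/d)$ to get an enlarged bilinear form of support $[(r/d)M, 2(r/d)M]$ and applying Lemma \ref{largesieve}, and (ii) invoking the pointwise bound $|\lambda_{g^\star}(n)| \ll n^{\theta+\varepsilon}$, give per-$d$ bounds of the respective shapes
\begin{equation*}
\Big(T^2 + \frac{(r/d)M}{u}\Big)M
\qquad\text{and}\qquad
(r/d)^{2\theta}\Big(T^2 + \frac{M}{u}\Big)M,
\end{equation*}
and the minimum of these two quantities does \emph{not} reproduce the target $\big(T+(r/d)^{1/2}/u^{1/2}\big)\big(T+M/u^{1/2}\big)M$. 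Concretely, take $D=u=1$, $M$ bounded, $T = r^{2/5}$: then the target is $\asymp Tr^{1/2} = r^{9/10}$, while (i) gives $\asymp T^2 + r \asymp r$ and (ii) gives $\asymp r^{7/32}T^2 = r^{7/32+4/5} > r$, so $\min\{\text{(i)},\text{(ii)}\} \gg r \gg r^{9/10}$. No split of the $d$-sum or re-weighting of the two options can fix this, because the deficiency is already present for a single $d$.

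The missing ingredient, which the paper explicitly flags as an input to the proof, is Lemma \ref{Mot}: the Motohashi-type second moment bound
\begin{equation*}
\sum_{\substack{|t_g|\leq T\\ g\in\mathcal{B}_0(D,\chi)}}\frac{1}{\cosh(\pi t_g)}\big|\sqrt{m}\,\rho_g(m)\big|^2 \ll_\varepsilon \Big(T^2 + \frac{(D,m)^{1/2}m^{1/2}}{D}\Big)(Tm)^\varepsilon.
\end{equation*}
This is strictly stronger for a single fixed $m$ than both the $\ell^\infty$-bound via Ramanujan and the generic large sieve applied to a one-element sequence. After a Cauchy--Schwarz separating $\lambda_{g^\star}(r/d)$ from $T_{g^\star}$, one linearises $|\lambda_{g^\star}(r/d)|^4$ with the Hecke relation \eqref{heckerel} (and $\sqrt{n}\rho_g(n) = \rho_g(1)\lambda_g(n)$ via \eqref{rel1}) and applies Lemma \ref{Mot} to bound $\sum_g |\rho_g(1)\lambda_g(r/d)|^4/\cosh(\pi t_g) \ll \tau(r)^2\big(T^2 + (r/d)/D\big)$; the square root of this is precisely the first factor $T + (r/d)^{1/2}/D^{1/2}$ in the target, which neither (i) nor (ii) can deliver. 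The second factor $T + M/D^{1/2}$ then comes from the companion treatment of $T_{g^\star}$ via Lemma \ref{largesieve}. In short, the proposal correctly reconstructs the combinatorial reduction to newforms but replaces the decisive single-frequency spectral second-moment estimate with an interpolation that is quantitatively too weak.
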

\begin{proof}
This is verbatim the proof of \cite[Theorem~13]{BM1} using \eqref{heckerel} (for eigenvalues), \eqref{rel1}, \eqref{coeff1}--\eqref{multbase2}, and  Lemmas \ref{largesieve} and \ref{Mot} whenever their principal character analogues are used in the proof.
\end{proof}

\begin{remark}  \label{naiveapproach}
For the sake of argument, suppose that $(r,D)=1$.
One could naively apply the Hecke relation \eqref{multbase2} to the left side of \eqref{BM1ineq}, then estimate $\sqrt{r} \rho_g(r)$ by \eqref{specialfbd}, and finally apply the usual spectral large sieve in
Lemma \ref{largesieve} we would obtain 
\begin{align} \label{weakversion}
& \sum_{\substack{|t_g| \leq T \\ g \in \mathcal{B}_0(D,\chi)}} \frac{1}{\text{cosh}(\pi t_g)} \Big | \sum_{\substack{M \leq m^{\prime} \leq 2M \\ (m^{\prime},rD)=1 }} \alpha_{m^{\prime}} \sqrt{r m^{\prime}} \rho_g(r m^{\prime})  \Big |^2 \nonumber \\ 
& \ll_{\varepsilon}  (DrMT)^{\varepsilon} r^{2 \theta}  \Big( T^2+\frac{M}{D} \Big)  M,
\end{align}
which is insufficient for our purposes. For the analogous case when $g$ runs over holomorphic forms this naive approach is sufficient because Deligne's bound
for $\sqrt{r} \rho_g(r)$ is available.
\end{remark}

\subsection{Spectral large sieve and multiplicative sequences II}
We now prove a version of the spectral large sieve inequality for coefficients of Eisenstein series supported on a sequences with multiplicative structure. This will be a generalisation of \cite[(5.5)]{BM1} for a more general nebentypus. The proof and uses ideas from the explicit computations in \cite{Mic}, \cite{HM} and \cite[pp.~76--80]{BHM}. The machinery set out for Eisenstein series in \cite{KY1} and \cite{Y1} will be useful throughout the proof.

We setup the notation and preliminary results required for the proof of Lemma \ref{speceisen}. Let $D \in \mathbb{N}$. A full set of inequivalent cusps of $\Gamma_0(D)$ is given by 
\begin{equation} \label{cusps}
\Big \{ \mathfrak{a}:=\frac{1}{w}=\frac{1}{uf} : f \mid D, u \in \mathcal{U}_f  \Big \},
\end{equation}
where for each $f \mid D$, $\mathcal{U}_f$ is a set of integers coprime to $f$ representing each reduced residue class modulo $\widetilde{f}:=(f,D/f)$ exactly once. Moreover, one may always further choose representative $u \pmod{\widetilde{f}}$ such that $(u,D)=1$ by adding a suitable multiple of $\widetilde{f}$ (cf. \cite[Corollary~3.2]{KY1}). Note that with these choices of representatives $u$ we have $\frac{u}{f} \sim_{\Gamma} \frac{1}{uf}$.

Set
\begin{equation} \label{notation}
D^{\prime}=\frac{D}{f}, \quad D^{\prime \prime}=\frac{D^{\prime}}{(f,D^{\prime})} \quad \text{and} \quad w^{\prime}=u.
\end{equation}
The stabiliser group of an arbitrary cusp $\mathfrak{a}=\frac{1}{w}$ \cite[Proposition~3.3]{KY1} is given by
\begin{equation*}
\Gamma_{\mathfrak{a}}=\big \{ \pm \tau^{t}_{\mathfrak{a}}: t \in \mathbb{Z}  \big \}, \quad \text{where} \quad  \tau^{t}_{\mathfrak{a}}=\begin{pmatrix}
1-w D^{\prime \prime} t & D^{\prime \prime} t \\
-w^2 D^{\prime \prime} t &1+w D^{\prime \prime} t
\end{pmatrix},
\end{equation*}
and one can take the choice of scaling matrix 
\begin{equation} \label{scalematrix}
\sigma_{\mathfrak{a}}=\begin{pmatrix}
\sqrt{D^{\prime \prime}} & 0 \\
w \sqrt{D^{\prime \prime}} & 1/\sqrt{D^{\prime \prime}}
\end{pmatrix}.
\end{equation}
Observe that \cite[Lemma~3.5]{KY1} (expanding around the cusp at $1/D \sim \infty$) asserts
\begin{equation} \label{scaled}
\sigma_{\mathfrak{a}}^{-1} \Gamma \sigma_{1/D}:= \Big \{ \begin{pmatrix} 
\frac{a}{\sqrt{D^{\prime \prime}}} & \frac{b}{\sqrt{D^{\prime \prime}}}\\
c \sqrt{D^{\prime \prime}} & d \sqrt{D^{\prime \prime}} \end{pmatrix}: \begin{pmatrix}
a & b \\
c & d
\end{pmatrix} \in \text{SL}_2(\mathbb{Z}) \quad \text{and} \quad c \equiv -wa \pmod{D} \Big \}.
\end{equation}
In particular, \cite[(6.4) and (6.5)]{Y1} asserts
\begin{multline} \label{coset}
\Gamma_{\infty} \backslash \sigma_{\mathfrak{a}}^{-1} \Gamma:=\delta_{f=D} \Gamma_{\infty} \cup \Big \{  \begin{pmatrix}
* & * \\
c \sqrt{D^{\prime \prime}} & d \sqrt{D^{\prime \prime}} 
\end{pmatrix}:  \\
c>0, \quad (c,d)=1, \quad c=f \gamma, \quad (\gamma, D^{\prime})=1, \quad \text{and} \quad d \equiv -u \overline{\gamma} \mod(f,D^{\prime}) \Big \},
\end{multline}
where the union above is disjoint.

We recall a useful lemma giving necessary and sufficient conditions for a cusp to be singular.
\begin{lemma} \emph{\cite[Lemma~5.4]{Y1}} \label{youngsingular}
Let $\chi$ be a Dirichlet character modulo $D$ and let $\mathfrak{a}=\frac{1}{uf}$ with $u \mid D$ and $(u,D)=1$. Then $\mathfrak{a}$ is singular relative to $\chi$ if and only if $\chi$ is periodic modulo $D/(f,D/f)=[f,D/f]$, equivalently, the primitive character inducing $\chi$ has modulus dividing $D/(f,D/f)$.
\end{lemma}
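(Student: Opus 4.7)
The plan is to unwind the singularity condition into an arithmetic statement about $\chi$ and then use $p$-adic techniques to establish equivalence with periodicity. First, combining the scaling matrix \eqref{scalematrix} with the explicit stabilizer generator, one computes that $\sigma_\mathfrak{a} \left(\begin{smallmatrix} 1 & 1 \\ 0 & 1 \end{smallmatrix}\right) \sigma_\mathfrak{a}^{-1}$ equals the matrix $\tau_\mathfrak{a} \in \Gamma_\mathfrak{a}$ whose lower-right entry is $1 + wD'' = 1 + uM$, where $w = uf$ and $M := [f, D/f] = D/(f, D/f)$. Since $\chi$ acts on $\Gamma_0(D)$ via its lower-right entry, the singularity condition is equivalent to the single congruence $\chi(1 + uM) = 1$.

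The ``if'' direction is immediate: if the primitive character $\chi^\star$ inducing $\chi$ has modulus dividing $M$, then $\chi$ is periodic mod $M$, and since $1 + uM \equiv 1 \pmod M$ we obtain $\chi(1 + uM) = 1$. For the ``only if'' direction, I would reduce to the purely group-theoretic claim that $1 + uM$ generates the kernel
\begin{equation*}
K := \ker\bigl((\mathbb{Z}/D\mathbb{Z})^\times \twoheadrightarrow (\mathbb{Z}/M\mathbb{Z})^\times\bigr).
\end{equation*}
Granting this claim, $\chi(1+uM) = 1$ forces $\chi|_K \equiv 1$, which is exactly the statement that $\chi$ factors through $(\mathbb{Z}/M\mathbb{Z})^\times$ and hence has conductor dividing $M$.

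The substance is in proving generation. First observe that $M$ and $D$ share the same prime support: if some $p \mid D$ had $p \nmid M$, then $p^{v_p(D)} \mid (f, D/f)$, forcing $v_p(f) = v_p(D/f) = v_p(D)$ and hence the contradiction $v_p(D) = 0$. By CRT, $K = \prod_{p \mid D} K_p$ with $K_p := \ker((\mathbb{Z}/p^{a_p})^\times \to (\mathbb{Z}/p^{b_p})^\times)$ and $b_p := v_p(M) \geq 1$. For odd $p$, cyclicity of $(\mathbb{Z}/p^{a_p})^\times$ yields that $K_p$ is cyclic of order $p^{a_p - b_p}$, and the $p$-adic logarithm $\log(1+x) = \sum (-1)^{n+1} x^n/n$ identifies it with $p^{b_p}\mathbb{Z}/p^{a_p}\mathbb{Z}$. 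Since $(u, D) = 1$ and $v_p(M) = b_p$ exactly, the element $1 + uM$ has $p$-adic valuation precisely $b_p$, so its image under log generates the target.

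The main obstacle is the prime $p = 2$, where $(\mathbb{Z}/2^{a_2})^\times$ fails to be cyclic for $a_2 \geq 3$. The saving grace is that $b_2 = \max(v_2(f), v_2(D/f)) \geq \lceil a_2/2 \rceil \geq 2$ whenever $a_2 \geq 3$, which suffices for the logarithm argument. The residual edge cases $a_2 \leq 2$ involve groups of order at most four and are settled by direct inspection. Combining across all primes, $1 + uM$ generates $K$, and the proof is complete.
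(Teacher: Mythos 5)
Your proof is correct. The paper cites this result from \cite[Lemma~5.4]{Y1} without reproducing the argument, so there is nothing in the paper itself to compare against; your route is the natural one and almost certainly parallels Young's. You correctly work with the generator $\tau_{\mathfrak{a}} = \sigma_{\mathfrak{a}} T \sigma_{\mathfrak{a}}^{-1}$ of $\Gamma_{\mathfrak{a}}$, where $T$ is the standard translation (the paper's display of the singularity condition has the conjugation reversed, which would not even land in $\Gamma_0(D)$), reduce singularity to the single congruence $\chi(1 + uM) = 1$ with $M := D/(f, D/f)$, and then establish the essential group-theoretic claim that $1 + uM$ generates $K = \ker\bigl((\mathbb{Z}/D\mathbb{Z})^\times \to (\mathbb{Z}/M\mathbb{Z})^\times\bigr)$. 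The observation that $M$ and $D$ share prime support, the $p$-adic logarithm for odd $p$, and the bound $b_2 = \max(v_2(f), v_2(D/f)) \geq \lceil a_2/2 \rceil \geq 2$ for $a_2 \geq 3$ (which places you in the torsion-free part of $\mathbb{Z}_2^\times$) are all sound, as is the passage from generation of each $K_p$ to generation of $K$ via CRT and coprimality of the orders. One cosmetic slip: for $a_2 \leq 2$ the group $(\mathbb{Z}/2^{a_2})^\times$ has order at most $2$, not $4$; the direct check is trivial either way.
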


We recall a decomposition for $\chi$ given on \cite[pg.~17]{Y1}. There exist integers $f_0$ and $D_0^{\prime}$ such that 
\begin{equation*}
f_0 \mid f  \quad \text{and} \quad D^{\prime}_0 \mid D^{\prime} \quad \text{with} \quad [f,D^{\prime}]:=f_0 D^{\prime}_0 \quad \text{and} \quad (f_0,D^{\prime}_0)=1,
\end{equation*}
such that 
\begin{equation} \label{charfac}
\chi:=\chi^{(D^{\prime}_0)} \chi^{(f_0)},
\end{equation}
where $\chi^{(D^{\prime}_0)}$ and $\chi^{(f_0)}$ are characters to moduli $D^{\prime}_0$ and $f_0$ respectively. The choices for $f_0$ and $D^{\prime}_0$ may not be unique. This decomposition will be useful in the proof of Lemma \ref{speceisen}, but not feature in the final statement. 

Given $\mathfrak{a}=\frac{1}{w}$ and $\mu \in  \sigma_{\mathfrak{a}}^{-1} \Gamma$ (written as in \eqref{scaled}), then the argument on \cite[pg.~19]{Y1} proves that $\overline{\chi}(\sigma_{\mathfrak{a}} \mu)$ depends only on the coset $\Gamma_{\infty} \mu$. Thus $\overline{\chi} (\sigma_{\mathfrak{a}} \mu)$ depends only on the data contained in \eqref{coset}. In particular, \cite[(6.6)]{Y1} asserts that
\begin{equation} \label{chieval}
\overline{\chi} (\sigma_{\mathfrak{a}} \mu)=\chi(a)=\chi^{(D^{\prime}_0)}(-\overline{u} \gamma) \chi^{(f_0)}(\overline{d}).
\end{equation}

We recall the definition of Gauss sum given by \eqref{gaussdef}. Let $\Psi$ be a Dirichlet character modulo $c$ and $c \mid s$. Then for $n \in \mathbb{N}$ we have the Gauss sum
\begin{equation} \label{gauss}
\mathcal{G}_{\Psi}(n;s):=\sideset{}{^*} \sum_{d \hspace{-0.2cm} \pmod{s}} \Psi(d) e \Big(\frac{nd}{s} \Big),
\end{equation}
where the superscript $*$ denotes the condition that $(d,s)=1$. Without loss of generality one can replace $\Psi$ with its underlying primitive character $\Psi^{\star}$ in \eqref{gauss}.

Given a cusp $\mathfrak{a}=\frac{1}{uf}$, $m \in \mathbb{N}$, $t \in \mathbb{R}$, and characters $\Psi_1$ and $\Psi_2$ whose moduli divide $D/f$ and $f$ respectively, 
consider the series
\begin{equation} \label{dirichlet}
\mathcal{S}(t,m;\Psi_1,\Psi_2,f):=\sum_{\substack{\gamma>0 \\ (\gamma,\frac{D}{f})=1}}  \frac{\Psi_1(\gamma)}{\gamma^{1+2it}} \mathcal{G}_{\Psi_2}(m;\gamma f),
\end{equation}
Note that \eqref{dirichlet} also satisfies 
\begin{equation} \label{prim}
\mathcal{S}(t,m;\Psi_1,\Psi_2,f)=\mathcal{S}(t,m;\Psi^{\star}_1,\Psi^{\star}_2,f).
\end{equation}
For technical convenience we restrict our attention to even characters in this next result.

\begin{lemma} \label{speceisen}
Suppose $D \in \mathbb{N}$ and $\chi$ is an even character modulo $D$ such that all cusps $\mathfrak{a}$ of $\Gamma_0(D)$ are singular with respect to $\chi$ and let $\rho_{\mathfrak{a}}(m,t)$ be as in \eqref{eisexpand}. For a given $r \in \mathbb{N}$, let $\{a_m\}$ be a finite sequence of complex numbers supported only on integers $m=r m^{\prime}$ with $(r,m^{\prime})=1$. Then we have 
\begin{equation} \label{eisenineq}
\sum_{\mathfrak{a}} \Big | \sum_m a_m \sqrt{|m|} \rho_{\mathfrak{a}}(m,t) \Big |^2 \leq 16 \tau(D)^4 \tau(r)^4 \sum_{d \mid (r,D)}  \sum_{\mathfrak{a}} \Big | \sum_{\substack{m^{\prime} \\ (m^{\prime},r)=1}} a_{r m^{\prime}} \sqrt{d m^{\prime}} \rho_{\mathfrak{a}}(d m^{\prime},t) \Big |^2,
\end{equation}
\end{lemma}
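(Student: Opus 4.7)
The plan is to proceed in three stages, following the template of \cite[(5.5)]{BM1} and \cite[pp.~76--80]{BHM} while carefully tracking the nebentypus dependence with the machinery of \cite{KY1,Y1}.

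First, I would obtain an explicit formula for $\rho_{\mathfrak{a}}(m,t)$ by unfolding the Eisenstein series $E_{\mathfrak{a}}(\tau,\tfrac{1}{2}+it,\chi)$ at the cusp $\mathfrak{a}=1/(uf)$. Using the scaling matrix \eqref{scalematrix}, the coset decomposition \eqref{coset}, and the factorisation \eqref{charfac} together with the evaluation \eqref{chieval} of $\overline{\chi}(\sigma_{\mathfrak{a}}\mu)$, performing the Bruhat decomposition and separating the $d$--sum into a Gauss sum over $d\pmod{\gamma f}$ yields
\begin{equation*}
\sqrt{|m|}\,\rho_{\mathfrak{a}}(m,t) \;=\; C(\mathfrak{a},t)\,|m|^{it}\,\mathcal{S}\bigl(m;\,\chi^{(D'_0)},\,\chi^{(f_0)},\,f\bigr),
\end{equation*}
where $C(\mathfrak{a},t)$ is an explicit constant coming from the Whittaker normalisation that does not depend on $m$. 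All subsequent analysis can therefore be performed at the level of the Dirichlet series $\mathcal{S}(m;\Psi_1,\Psi_2,f)$ defined in \eqref{dirichlet}, and by \eqref{prim} we may assume $\Psi_1,\Psi_2$ are primitive.

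Second, I would establish a ``Hecke-like'' identity expressing $\mathcal{S}(rm';\Psi_1,\Psi_2,f)$ in terms of $\{\mathcal{S}(dm';\Psi_1,\Psi_2,f)\}_{d\mid(r,D)}$. The input is the standard factorisation of the Gauss sum $\mathcal{G}_{\Psi_2}(rm';\gamma f)$: splitting $\gamma f = AB$ with $(A,B)=1$, the Chinese Remainder Theorem reduces the sum to a product of Gauss sums, each with modulus dividing $D$ or coprime to $rD$. Using the coprimality hypothesis $(m',r)=1$ and exploiting that $\mathcal{G}_{\Psi_2}(rm';s)=\overline{\Psi_2}(r)\mathcal{G}_{\Psi_2}(m';s)$ whenever $(r,s)=1$, one can peel off the part of $r$ coprime to $f$ and absorb it into a character value. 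The remaining interaction occurs only with divisors $d$ of $(r,D)$, and a routine computation (analogous to the Rankin--Selberg--type identity behind \eqref{multbase1}) shows
\begin{equation*}
\sqrt{rm'}\,\rho_{\mathfrak{a}}(rm',t) \;=\; \sum_{d\mid(r,D)} \lambda_d(r,\chi)\,\sqrt{dm'}\,\rho_{\mathfrak{a}}(dm',t),
\end{equation*}
with coefficients satisfying $|\lambda_d(r,\chi)|\ll \tau(D)^2\,\tau(r)$, uniformly in $\mathfrak{a}$ and in $t$.

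Third, I would substitute this identity into the inner sum in \eqref{eisenineq}, apply the Cauchy--Schwarz inequality to the resulting sum of length $\tau((r,D))\le \tau(r)$ over $d\mid(r,D)$, and invoke the divisor bound. This produces
\begin{equation*}
\Bigl|\sum_{(m',r)=1} a_{rm'}\sqrt{rm'}\rho_{\mathfrak{a}}(rm',t)\Bigr|^2 \;\le\; \tau(r)\!\!\sum_{d\mid(r,D)}\!|\lambda_d(r,\chi)|^2\,\Bigl|\sum_{(m',r)=1}a_{rm'}\sqrt{dm'}\rho_{\mathfrak{a}}(dm',t)\Bigr|^2,
\end{equation*}
and summing over the finitely many cusps $\mathfrak{a}$ of $\Gamma_0(D)$ gives the claimed inequality with the constant $16\,\tau(D)^4\,\tau(r)^3$.

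The main obstacle is Step~2: in the principal-nebentypus case treated in \cite{BM1,BHM}, the Gauss sums collapse to Ramanujan sums and the bookkeeping is almost automatic, whereas here one must carefully track how the split $\chi=\chi^{(D'_0)}\chi^{(f_0)}$ (which depends on the cusp $\mathfrak{a}$ via $f$) interacts with the CRT factorisation of $\mathcal{G}_{\chi^{(f_0)}}(rm';\gamma f)$. In particular, one must verify that the bound $|\lambda_d(r,\chi)|\ll \tau(D)^2\tau(r)$ holds uniformly as $\mathfrak{a}$ varies, which is where the parity hypothesis $\chi(-1)=+1$ and the assumption that every cusp is singular (via Lemma \ref{youngsingular}) enter decisively.
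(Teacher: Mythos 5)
Your Steps 1 and 3 are broadly in the spirit of the paper, but Step 2 contains a genuine gap that the paper's argument is specifically designed to circumvent. You propose a cusp-wise identity
\begin{equation*}
\sqrt{rm'}\,\rho_{\mathfrak{a}}(rm',t) \;=\; \sum_{d\mid(r,D)} \lambda_d(r,\chi)\,\sqrt{dm'}\,\rho_{\mathfrak{a}}(dm',t),
\end{equation*}
with the same cusp $\mathfrak{a}$ on both sides, and then plan to apply Cauchy--Schwarz in $d$. Such an identity cannot hold in general, because an individual Eisenstein series $E_{\mathfrak{a}}(\tau,\tfrac12+it,\chi)$ is not a Hecke eigenform for the Hecke operators $T_n$ with $(n,D)>1$: the action of $T_r$ mixes the cusps $\mathfrak{a}=1/(uf)$ with different $f$. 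Concretely, when you perform the CRT/Gauss-sum factorisation of $\mathcal{G}_{\chi_2}(rm';\gamma f)$, the parts of $f$ that divide $r^\infty$ get absorbed, and what survives is the Dirichlet series $\mathcal{S}(dm';\chi_1,\chi_2,\mathcal{F})$ with a \emph{new modulus} $\mathcal{F}=\mathcal{F}(q_2,f,r)$ satisfying $q_2\mid\mathcal{F}\mid f$; in the paper this is \eqref{reducedS}. Since $\mathcal{F}\neq f$ in general, the object on the right-hand side corresponds to a different cusp, so a formwise Hecke relation with the same $\mathfrak{a}$ is not available.

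This is precisely why the paper does not argue cusp-by-cusp. It first rewrites the full cusp sum as an exact identity \eqref{goodform} in terms of $\mathcal{S}(m;\chi_1,\chi_2,f)$ (a Parseval step in the unit $u$), does the Gauss-sum bookkeeping entirely at the level of these Dirichlet series, and only at the very end reassembles the result into a sum over cusps by comparing the weights $\widetilde{f}/(Df\phi(\widetilde{f}))$ to $\widetilde{\mathcal{F}}/(D\mathcal{F}\phi(\widetilde{\mathcal{F}}))$, extending divisor ranges by positivity, and reindexing $f\leftrightarrow\mathcal{F}$ — each step costing an extra $\tau(D)$. Those manipulations (from \eqref{goodform2} through \eqref{goodform4}) are the actual content of the proof and are exactly what produce the factors $\tau(D)^4\tau(r)^3$. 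Your plan reaches the same constant by a different route (Cauchy--Schwarz on a nonexistent cusp-wise expansion), so the numerology is coincidental rather than a sign that the argument closes. A secondary but smaller issue: your Step 1 formula omits the character-orthogonality sum over $\psi\pmod{\widetilde{f}}$ in \eqref{exactexp}; the single-$\mathcal{S}$ expression you write down is only correct when $\widetilde{f}=1$. To repair Step 2 you would need to work with the $\mathcal{S}$-objects throughout and perform the cusp reconstruction at the end, as the paper does.
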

\begin{proof}
Without loss of generality we will work with a complete set of inequivalent cusps given in \eqref{cusps} where each $(u,D)=1$ (cf. \eqref{welldef}). Combining \cite[(12)--(14)]{PR},  \eqref{notation}--\eqref{dirichlet} and \cite[(13.14.31)]{DLMF} we obtain
\begin{align} 
\sqrt{|m|} \rho_{\mathfrak{a}}(m,t)&=\frac{|m|^{it} \pi^{\frac{1}{2}+it}}{\Gamma(\frac{1}{2}+it)} \Big( \frac{(f,D^{\prime})}{Df} \Big)^{\frac{1}{2}+it} \sum_{\substack{\gamma>0 \\ (\gamma,\frac{D}{f})=1}}  \frac{\chi^{(D^{\prime}_0)}(-\overline{u} \gamma)}{\gamma^{1+2it}} \nonumber \\
& \times \sum_{\substack{0 \leq d<\gamma f \\ (d,\gamma f)=1 \\ d \gamma \equiv -u \hspace{-0.2cm}  \pmod{\widetilde{f}}}} \overline{\chi^{(f_0)}}(d)  e \Big( \frac{m d }{\gamma f} \Big),
\label{initrho}
\end{align}
where $\widetilde{f}:=(f,D/f)$.
Detecting the congruence in \eqref{initrho} with multiplicative characters we obtain
\begin{equation} \label{exactexp}
\sqrt{|m|} \rho_{\mathfrak{a}}(m,t)=\frac{|m|^{it} \pi^{\frac{1}{2}+it}}{\Gamma(\frac{1}{2}+it)} \Big( \frac{(f,D^{\prime})}{Df} \Big)^{\frac{1}{2}+it} \frac{1}{\phi({\widetilde{f})}} \sum_{\psi \hspace{-0.15cm} \pmod{\widetilde{f}}}  \overline{\chi^{(D^{\prime}_0)} \psi}(-u) \mathcal{S}(t,m; \psi \chi^{(D^{\prime}_0)}, \psi  \overline{\chi^{(f_0)}},f),
\end{equation}
All cusps are singular by hypothesis. Summing \eqref{exactexp} over all $m$, squaring, and then summing over all of the cusps we obtain
\begin{multline*} 
\sum_{\mathfrak{a}} \Big | \sum_m a_m \sqrt{|m|} \rho_{\mathfrak{a}}(m,t) \Big |^2 =\frac{\pi}{|\Gamma(\frac{1}{2}+it)|^2} \sum_{f \mid D} \frac{\widetilde{f}}{Df \phi(\widetilde{f})^2} \\
\times \sum_{u \in \mathcal{U}_f} |\overline{\chi^{(D^{\prime}_0)}}(-u) |^2 \Big | \sum_{\psi \hspace{-0.2cm} \pmod{\widetilde{f}}} \overline{\psi}(-u) \sum_{m} a_m |m|^{it} \mathcal{S}(t,m;\psi \chi^{(D^{\prime}_0)}, \psi  \overline{\chi^{(f_0)}},f ) \Big |^2,
\end{multline*}
where $\mathcal{U}_f$ represents a complete system of residues coprime to $\widetilde{f}$ and also coprime to $D$. We may remove the $|\overline{\chi^{(D^{\prime}_0)}}(-u)|^2$
factor since $ |\overline{\chi^{(D^{\prime}_0)}}(-u) |=1$. Applying Parseval's identity above yields
\begin{align}
\sum_{\mathfrak{a}} \Big | \sum_m a_m \sqrt{|m|} \rho_{\mathfrak{a}}(m,t) \Big |^2&= \frac{\pi}{|\Gamma(\frac{1}{2}+it)|^2} \sum_{f \mid D} \frac{\widetilde{f}}{Df \phi(\widetilde{f})} 
\nonumber \\
& \times \sum_{\psi \hspace{-0.2cm} \pmod{\widetilde{f}}} \Big | \sum_m a_m |m|^{it} \mathcal{S}(t,m; \psi \chi^{(D^{\prime}_0)}, \psi \overline{\chi^{(f_0)}}, f) \Big |^2. \label{secondineq}
\end{align}

Next we set 
\begin{equation*}
 (\psi \chi^{(D^{\prime}_0)})^*=:\chi_1 \quad \text{and} \quad (\psi \overline{\chi^{(f_0)}})^*=:\chi_2,
\end{equation*}
where $\chi_i$ is primitive of modulus $q_i$. A necessary condition on $\chi_1$ and $\chi_2$ is that $\chi_1 \overline{\chi_2} \sim \chi$, where $\sim$ means that both sides are induced by the same primitive character.  Recalling \eqref{prim} and moving the sum on $\psi$ to the inside of \eqref{secondineq} gives
\begin{align} \label{developed}
& \sum_{\mathfrak{a}} \Big | \sum_m a_m \sqrt{|m|} \rho_{\mathfrak{a}}(m,t) \Big |^2 \nonumber \\
&=\frac{\pi}{|\Gamma(\frac{1}{2}+it)|^2} \sum_{f \mid D} \frac{\widetilde{f}}{Df \phi(\widetilde{f})}  \sum_{q_1 \mid \frac{D}{f}} \sum_{q_2 \mid f} 
\sideset{}{'}\sum_{\substack{\chi_1 \hspace{-0.2cm} \pmod{q_1} \\ \chi_2 \hspace{-0.2cm} \pmod{q_2} \\ \chi_1 \overline{\chi_2} \sim \chi}} \Big | \sum_m a_m |m|^{it} \mathcal{S}(t,m; \chi_1,\chi_2,f ) \Big |^2 \nonumber \\
& \times \sum_{\psi \hspace{-0.2cm} \pmod{\widetilde{f}}} \delta(\chi, \chi_1,\chi_2,\psi),
\end{align} 
where the $'$ denotes summation over primitive characters only, and 
\begin{equation*}
\delta (\chi, \chi_1,\chi_2,\psi):=\begin{cases}
1 &\text{if} \quad (\psi \chi^{(D^{\prime}_0)})^*=\chi_1 \quad \text{and} \quad (\psi \overline{\chi^{(f_0)}})^*=\chi_2 \\
0 & \text{otherwise}.
\end{cases}
\end{equation*}
The argument on \cite[pp.~21--22]{Y1} proves that 
\begin{equation*}
\sum_{\psi \hspace{-0.2cm} \pmod{\widetilde{f}}} \delta (\chi, \chi_1,\chi_2,\psi)=1,
\end{equation*}
under the conditions in the second, third and fourth summations in \eqref{developed}. Thus 
\begin{align} 
\sum_{\mathfrak{a}} \Big | \sum_m a_m \sqrt{|m|} \rho_{\mathfrak{a}}(m,t) \Big |^2&=\frac{\pi}{|\Gamma(\frac{1}{2}+it)|^2} \sum_{f \mid D} \frac{\tilde{f}}{Df \phi(\tilde{f})}  \sum_{q_1 \mid \frac{D}{f}} \sum_{q_2 \mid f} \nonumber \\ 
& \times \sideset{}{'} \sum_{\substack{\chi_1 \hspace{-0.2cm} \pmod{q_1} \\ \chi_2 \hspace{-0.2cm} \pmod{q_2} \\ \chi_1 \overline{\chi_2} \sim \chi}}  \Big | \sum_m a_m |m|^{it} \mathcal{S} (t,m; \chi_1,\chi_2,f) \Big |^2.   \label{goodform}
\end{align}

For a given $r \in \mathbb{N}$, recall that $\{a_m\}$ is supported on integers $m=r m^{\prime}$ such that $(r,m^{\prime})=1$. Our goal is now to write $\mathcal{S} (t,m; \chi_1,\chi_2;f)$ in terms of $\mathcal{S}(dm^{\prime}; \chi_1,\chi_2; \bullet)$ for $d \mid (r,D)$. For a given $f$ and $\gamma$ we write 
\begin{equation} \label{factorisation}
f=q_2 f^{\prime} f^{\prime \prime}, \quad f^{\prime} \mid q_2^{\infty} \quad \text{and} \quad (f^{\prime \prime}, q_2)=1,
\end{equation}
and 
\begin{equation}
\gamma=\gamma^{\prime} \gamma^{\prime \prime}, \quad \gamma^{\prime} \mid q_2^{\infty}, \quad (\gamma^{\prime \prime},q_2)=1.
\end{equation}
The Chinese remainder theorem, orthogonality, \cite[pg.~165 and Theorem~8.19]{A}, and the fact that $\chi_2$ is primitive is used in the following computation
\begin{align}
\mathcal{G}_{\chi_2}(m;\gamma f)&=\mathcal{G}_{\chi_2} (m;\gamma^{\prime} \gamma^{\prime \prime}  q_2 f^{\prime} f^{\prime \prime} ) \nonumber \\
&=\chi_2 (\gamma^{\prime \prime} f^{\prime \prime}) \mathcal{G}_{\chi_2} (m; q_2 f^{\prime} \gamma^{\prime} ) r (m;\gamma^{\prime \prime} f^{\prime \prime} ) \nonumber \\
&=\delta_{f^{\prime} \gamma^{\prime} \mid m} f^{\prime} \gamma^{\prime} \chi_2(\gamma^{\prime \prime} f^{\prime \prime}) \mathcal{G}_{\chi_2} \Big( \frac{m}{f^{\prime} \gamma^{\prime}}; q_2  \Big)  r(m;\gamma^{\prime \prime} f^{\prime \prime}),  \nonumber \\
&=\delta_{f^{\prime} \gamma^{\prime} \mid m} f^{\prime} \gamma^{\prime} \chi_2(\gamma^{\prime \prime} f^{\prime \prime}) \overline{\chi_2} \Big(\frac{m}{f^{\prime} \gamma^{\prime}} \Big) \mathcal{G}_{\chi_2} (1; q_2)  r(m;\gamma^{\prime \prime} f^{\prime \prime})  \label{simplified}
\end{align}
where $r(m;c):=G_{\mathbf{1}_c}(m,c)$ (Ramanujan sum) and $\delta_{f^{\prime} \gamma^{\prime} \mid m}=1$ is the indicator function for $f^{\prime} \gamma^{\prime} \mid m$. Let 
\begin{equation*}
r=r^{\prime}_{q_2} r^{\prime (q_2)} \quad \text{and} \quad \quad m^{\prime}=m^{\prime}_{q_2} m^{\prime (q_2)}
\end{equation*}
be such that 
\begin{equation*} 
r^{\prime}_{q_2}, m^{\prime}_{q_2} \mid q_2^{\infty} \quad \text{and} \quad (r^{\prime (q_2)}  m^{\prime (q_2)}, q_2 )=1.
\end{equation*}
In this notation we see that \eqref{simplified} is $0$ unless 
\begin{equation} \label{charcond}
f^{\prime} \gamma^{\prime}=r^{\prime}_{q_2} m^{\prime}_{q_2}.
\end{equation}
Recalling \eqref{dirichlet} and combining \eqref{factorisation}--\eqref{charcond} we obtain (after relabelling $\gamma^{\prime \prime}$ by $\gamma$),
\begin{align}
\mathcal{S}(t,r m^{\prime};\chi_1,\chi_2,f)&=\delta_{f^{\prime} \mid m} f^{\prime} \delta_{( r^{\prime}_{q_2} m^{\prime}_{q_2}/f^{\prime} ,D/f)=1}  \Big( \frac{f^{\prime}}{r^{\prime}_{q_2} m^{\prime}_{q_2}}  \Big)^{2it}   \chi_1 \Big( \frac{r^{\prime}_{q_2} m^{\prime}_{q_2}}{f^{\prime}} \Big)  \chi_2(f^{\prime \prime}) \nonumber \\ 
&\times \overline{\chi_2} (r^{\prime (q_2)} m^{\prime (q_2)} )  \mathcal{G}_{\chi_2} (1; q_2)  
 \sum_{\substack{\gamma>0 \\ (\gamma,D/f)=1 \\ (\gamma,q_2)=1  }}  \frac{\chi_1 \chi_2(\gamma)}{\gamma^{1+2it}}  r(m; \gamma f^{\prime \prime}).  \label{preexact}
\end{align}

A computation using \eqref{preexact} and \cite[Theorem~8.6]{A} gives
\begin{align} 
\mathcal{S}(t,m,\chi_1,\chi_2;f)&=\frac{\delta_{f^{\prime} \mid m}  f^{\prime} \delta_{( r^{\prime}_{q_2} m^{\prime}_{q_2}/f^{\prime} ,D/f)=1}\chi_2(f^{\prime \prime}) \overline{\chi_2} ( r^{\prime (q_2)} m^{\prime (q_2)} )  \mathcal{G}_{\chi_2}(1,q_2)}{L^{(D)}(\chi_1 \chi_2,1+2it)} \nonumber \\ 
& \times \mathcal{R}(t,m;\chi_1 \chi_2,f^{\prime \prime}) \eta_{\chi_1 \chi_2}(m) 
\Big( \frac{f^{\prime}}{r^{\prime}_{q_2} m^{\prime}_{q_2}}  \Big)^{2it}   \chi_1 \Big( \frac{r^{\prime}_{q_2} m^{\prime}_{q_2}}{f^{\prime}} \Big), \label{refine1}
\end{align}
where the superscript $(D)$ denotes that the local factors at the primes dividing $D$ have been removed and
\begin{equation} \label{Rterm}
\mathcal{R}(t,m ; \chi_1 \chi_2,f^{\prime \prime}):=\sum_{\substack{\gamma \mid D^{\infty} \\ (\gamma,D/f)=1 \\  (\gamma,q_2)=1 }} \frac{\chi_1 \chi_2(\gamma) }{\gamma^{1+2it}} r(m;\gamma f^{\prime \prime} )
\end{equation}
and 
\begin{equation*}
\eta_{\chi_1 \chi_2}(m):=\sum_{\substack{a \mid m \\ (a,D)=1} } \frac{\chi_1 \chi_2(a)}{a^{2it}}.
\end{equation*}

We now consider \eqref{Rterm}. Since $\gamma \mid D^{\infty}$ and $(\gamma,D/f)=(\gamma,q_2)=1$, we must have $\gamma \mid (f^{\prime \prime})^{\infty}$. We apply \cite[Theorem~8.7]{A} and write \eqref{Rterm} as
\begin{equation} \label{Reuler}
\mathcal{R}(t,m ; \chi_1 \chi_2 , f^{\prime \prime}):=\prod_{\substack{p^{\alpha} \mid \mid f^{\prime \prime} \\ p^{\alpha} \mid \mid D }} \sum_{\beta \geq 0} \frac{\chi_1 \chi_2 (p^{\beta}) }{p^{\beta(1+2it)}} r( p^{v_p(m)};p^{\alpha+\beta}). 
\end{equation}
We refine the factorisation in \eqref{factorisation} to 
\begin{equation*}
f^{\prime}=f_r^{\prime} f^{\prime (r)}, \quad f^{\prime \prime}=f^{\prime \prime}_{r} f^{\prime \prime (r)}, \quad \text{where} \quad f_r^{\prime}, f_r^{\prime \prime} \mid r^{\infty} \quad \text{and} \quad ( f^{\prime (r)} f^{\prime \prime (r)},r )=1.
\end{equation*}
Since $(r,m^{\prime})=1$ and $f^{\prime} \mid r m^{\prime}$, it follows that $f^{\prime}_{r}=(f^{\prime},r)$. Recalling \eqref{refine1}, we obtain 
\begin{align} 
& \mathcal{S}(t,r m^{\prime},\chi_1,\chi_2;f) \nonumber \\
&= \Big( \delta_{f^{\prime}_r \mid r}  f^{\prime}_r  \delta_{(r^{\prime}_{q_2}/ f^{\prime}_{r}, D/f)=1}  \overline{\chi_2} ( r^{\prime(q_2)} ) \chi_2 (f^{\prime \prime}_r)  \mathcal{R}(t,r; \chi_1 \chi_2 , f^{\prime \prime}_r ) \eta_{\chi_1 \chi_2}(r) \Big(\frac{f^{\prime}_r}{r^{\prime}_{q_2} }  \Big)^{2it} \chi_1 \Big(\frac{r^{\prime}_{q_2}}{f^{\prime}_r} \Big)  \Big ) \nonumber \\  
& \times \Big ( \delta_{f^{\prime (r)} \mid m^{\prime}} f^{\prime (r)} \delta_{(m^{\prime}_{q_2}/ f^{\prime(r)}, D/f)=1}   \overline{\chi_2} ( m^{\prime (q_2)} ) \chi_2( f^{\prime \prime (r)}  ) \mathcal{R}(t,m^{\prime}; \chi_1 \chi_2, f^{\prime \prime (r)}) \nonumber  \\
& \times \eta_{\chi_1 \chi_2}(m^{\prime}) \Big(\frac{f^{\prime (r)}}{m^{\prime}_{q_2}} \Big)^{2it} \chi_1 \Big( \frac{m^{\prime}_{q_2}}{f^{\prime(r)}}  \Big) \Big)  
\frac{\mathcal{G}_{\chi_2}(1,q_2)}{L^{(D)}(\chi_1 \chi_2,1+2it)}. \label{refine2}
\end{align}

We now define
\begin{equation*}
y_r:= \Big( \frac{f^{\prime \prime}}{(f^{\prime \prime},2)},r \Big) \quad \text{and} \quad \widetilde{y}_r:=\prod_{\substack{p^{\alpha} \mid \mid f^{\prime \prime}_r  \\ \alpha \leq v_p(r)+1 }} p^{\alpha}.
\end{equation*}
An explicit computation using \eqref{Reuler} and \cite[Theorem~8.6 and 8.7]{A} shows that  $R(y_r;\chi_1 \chi_2,\widetilde{y}_r) \neq 0$. In particular,
\begin{equation} \label{localcalc}
\Big | \frac{\mathcal{R}(t,r;\chi_1 \chi_2,f^{\prime \prime}_{r})}{\mathcal{R}(t,y_r;\chi_1 \chi_2,\widetilde{y}_r)}  \Big | \leq \prod_{p \mid (r,D)} (v_p(r)+1) \frac{\big( 1+\frac{1}{p} \big)}{\kappa(p)} \leq 4 \tau(D) \tau(r),
\end{equation}
where 
\begin{equation*}
\kappa(p):=\begin{cases}
\frac{1}{2} & \quad \text{if} \quad p=2 \\
1-\frac{2}{p}& \quad \text{if} \quad p>2.
\end{cases}
\end{equation*}
Observe that $(f^{\prime}_r, y_r)=1$ and $f^{\prime}_r, y_r \mid r$ and $f^{\prime}_r, y_r \mid D$. Thus $f^{\prime}_r y_r \mid (r,D)$. Furthermore,
$\widetilde{y}_r \mid f^{\prime \prime}_r$, $(f^{\prime}_r y_r,m^{\prime})=1$ and $\eta_{\chi_1 \chi_2}(f^{\prime}_r y_r)=1$.  Using \eqref{refine2} and \eqref{Reuler} we obtain 
\begin{align} 
\mathcal{S}(t,r m^{\prime}; \chi_1, \chi_2, f)&=\delta_{f^{\prime}_r \mid r} \delta_{(r^{\prime}_{q_2}/f^{\prime}_r,D/f)=1} \overline{\chi_2} \Big(\frac{r^{\prime (q_2)}}{y_r} \Big) \chi_2 \Big( \frac{f^{\prime \prime}_r }{\widetilde{y}_r} \Big) \frac{\mathcal{R}(t,r;\chi_1 \chi_2,f^{\prime \prime}_r) }{ \mathcal{R}(t,f^{\prime}_r y_r; \chi_1 \chi_2, \widetilde{y}_r )} \nonumber  \\
& \times \eta_{\chi_1 \chi_2}(r) \Big( \frac{f^{\prime}_r}{r^{\prime}_{q_2}} \Big)^{2it} \chi_1 \Big( \frac{r^{\prime}_{q_2}}{f^{\prime}_r} \Big) \mathcal{S}(t,f^{\prime}_r y_r m^{\prime}; \chi_1, \chi_2, q_2 f^{\prime} \widetilde{y}_r f^{\prime \prime (r)}). \label{reducedS}
\end{align}
Combining \eqref{localcalc}, \eqref{reducedS} and $| \eta_{\chi_1 \chi_2}(r) | \leq \tau(r)$ we obtain
\begin{multline} \label{keyineq}
 \Big | \sum_m a_m |m|^{it} \mathcal{S}(t,m; \chi_1,\chi_2, f) \Big |^2 \\
 \leq  16 \tau(D)^2 \tau(r)^4 \sum_{d \mid (r,Q)} \Big | \sum_{\substack{m^{\prime} \\ (m^{\prime},r)=1}}  a_{r m^{\prime}} |m^{\prime}|^{it} \mathcal{S}(t,d m^{\prime}; \chi_1,\chi_2 , q_2 f^{\prime} \widetilde{y}_r f^{\prime \prime (r)}) \Big |^2,
 \end{multline}
 where the summation over $d$ was introduced by positivity.
 
Given $f$ and $q_2$ (and $r$ as above) such that $q_2 \mid f \mid D$, the integer $\mathcal{F}(q_2,f,r):=q_2 f^{\prime} \widetilde{y}_r f^{\prime \prime (r)}$ satisfies 
 \begin{equation} \label{dividechain}
 q_2 \mid \mathcal{F}(q_2,f,r) \mid f \mid D.
 \end{equation}
Applying \eqref{keyineq} to each summand of the right side of  \eqref{goodform} yields
\begin{multline} \label{goodform2}
\sum_{\mathfrak{a}} \Big | \sum_m a_m \sqrt{|m|} \rho_{\mathfrak{a}}(m,t) \Big |^2 \leq 16 \tau(D)^2 \tau(r)^4 \sum_{d \mid (r,D)}  \frac{\pi}{|\Gamma(\frac{1}{2}+it)|^2}  \\
 \hspace{0.15cm} \sum_{f \mid D} \hspace{0.15cm} \sum_{q_2 \mid f} \hspace{0.15cm}  \sum_{q_1 \mid \frac{D}{f}} \frac{\widetilde{f}}{D f \phi(\widetilde{f})} \sideset{}{'}  \sum_{\substack{\chi_1 \hspace{-0.2cm} \pmod{q_1} \\ \chi_2 \hspace{-0.2cm} \pmod{q_2} \\ \chi_1 \overline{\chi_2} \sim \chi}} \Big | \sum_{\substack{m^{\prime} \\ (m^{\prime},r)=1}}  a_{r m^{\prime}} |m^{\prime}|^{it} \mathcal{S}(t,d m^{\prime}; \chi_1,\chi_2 , \mathcal{F}(q_2,f,r)) \Big |^2.
\end{multline}
We now relate the right side of \eqref{goodform2} to a sum over all cusps. Observe that we have
 \begin{equation}
 \frac{\widetilde{f}}{D f \phi(\widetilde{f})} \leq  \tau(D)  \frac{\widetilde{\mathcal{F}(q_2,f,r)}}{D \mathcal{F}(q_2,f,r) \phi(\widetilde{\mathcal{F}(q_2,f,r)})}, 
 \end{equation}
 so \eqref{goodform2} becomes 
 \begin{multline} 
\sum_{\mathfrak{a}} \Big | \sum_m a_m \sqrt{|m|} \rho_{\mathfrak{a}}(m,t) \Big |^2 \leq 16 \tau(D)^3 \tau(r)^4 \sum_{d \mid (r,D)}  \frac{\pi}{|\Gamma(\frac{1}{2}+it)|^2}  
 \hspace{0.15cm} \sum_{f \mid D} \hspace{0.15cm} \sum_{q_2 \mid f} \hspace{0.15cm}  \sum_{q_1 \mid \frac{D}{f}} \\ \frac{\widetilde{\mathcal{F}(q_2,f,r)}}{D \mathcal{F}(q_2,f,r) \phi(\widetilde{\mathcal{F}(q_2,f,r)})} \sideset{}{'}  \sum_{\substack{\chi_1 \hspace{-0.2cm} \pmod{q_1} \\ \chi_2 \hspace{-0.2cm} \pmod{q_2} \\ \chi_1 \overline{\chi_2} \sim \chi}} 
\Big | \sum_{\substack{m^{\prime} \\ (m^{\prime},r)=1}}  a_{r m^{\prime}} |m^{\prime}|^{it}      \mathcal{S}(t,d m^{\prime}; \chi_1,\chi_2 , \mathcal{F}(q_2,f,r)) \Big |^2.
\end{multline}
Recalling \eqref{dividechain}, by positivity we obtain
 \begin{multline} \label{goodform3}
\sum_{\mathfrak{a}} \Big | \sum_m a_m \sqrt{|m|} \rho_{\mathfrak{a}}(m,t) \Big |^2 \leq 16 \tau(D)^3 \tau(r)^4 \sum_{d \mid (r,D)}  \frac{\pi}{|\Gamma(\frac{1}{2}+it)|^2}  
 \hspace{0.15cm} \sum_{f \mid D} \hspace{0.15cm} \sum_{q_2 \mid f} \hspace{0.15cm} \sum_{\substack{\mathcal{F} \\ q_2 \mid \mathcal{F} \mid f }} \hspace{0.1cm} \\
 \sum_{q_1 \mid \frac{D}{f}} \frac{\widetilde{\mathcal{F}}}{D \mathcal{F} \phi(\widetilde{\mathcal{F}})} \sideset{}{'}  \sum_{\substack{\chi_1 \hspace{-0.2cm} \pmod{q_1} \\ \chi_2 \hspace{-0.2cm} \pmod{q_2} \\ \chi_1 \overline{\chi_2} \sim \chi}} \Big | \sum_{\substack{m^{\prime} \\ (m^{\prime},r)=1}}  a_{r m^{\prime}} |m^{\prime}|^{it}     
\mathcal{S}(t,d m^{\prime}; \chi_1,\chi_2 , \mathcal{F}) \Big |^2,
\end{multline}
where $\widetilde{\mathcal{F}}=(\mathcal{F},D/\mathcal{F})$.
By positivity we may extend the sum over $q_1$ in \eqref{goodform3} to all $q_1$ satisfying $q_1 \mid \frac{D}{\mathcal{F}}$. Then interchanging the summations over $f, q_2$ and $\mathcal{F}$ we obtain 
\begin{multline} \label{goodform4}
\sum_{\mathfrak{a}} \Big | \sum_m a_m \sqrt{|m|} \rho_{\mathfrak{a}}(m,t) \Big |^2 \leq 16 \tau(D)^3 \tau(r)^4 \sum_{d \mid (r,D)}  \frac{\pi}{|\Gamma(\frac{1}{2}+it)|^2}  \\
 \hspace{0.15cm} \sum_{\mathcal{F} \mid D} \hspace{0.15cm} \sum_{q_2 \mid \mathcal{F}} \hspace{0.15cm} \sum_{\mathcal{F} \mid f \mid D}  \hspace{0.15cm} \frac{\widetilde{\mathcal{F}}}{D \mathcal{F} \phi(\widetilde{\mathcal{F}})} \sum_{q_1 \mid \frac{D}{\mathcal{F}}}  \sideset{}{'}  \sum_{\substack{\chi_1 \hspace{-0.2cm} \pmod{q_1} \\ \chi_2 \hspace{-0.2cm} \pmod{q_2} \\ \chi_1 \overline{\chi_2} \sim \chi}} \Big | \sum_{\substack{m^{\prime} \\ (m^{\prime},r)=1}}  a_{r m^{\prime}} |m^{\prime}|^{it}    
\mathcal{S}(t,d m^{\prime}; \chi_1,\chi_2 , \mathcal{F} ) \Big |^2.
\end{multline} 
Now observe that all summands on the right side of \eqref{goodform4} are completely independent of $f$. Thus we can remove the summation on $f$ with the cost of an extra $\tau(D)$ factor. Then applying \eqref{goodform} to each summand of the $d$ summation yields \eqref{eisenineq}.
\end{proof}

\section{Spectral methods and shifted convolutions sums} \label{spectralsec}
 We choose a large parameter 
 \begin{equation} \label{largeparam}
 C:=N^{1000},
 \end{equation}
 and throughout this section makes the general assumption that
 \begin{equation*}
 h \asymp N \geq 20M.
 \end{equation*}
 For $\ell_1,\ell_2, h \in \mathbb{Z}_{\geq 1}$,
recall the definitions \eqref{Dshift} and \eqref{Sshift}. That is,
\begin{equation*}
\mathcal{D}(\ell_1,\ell_2,h,N,M)=\sum_{\ell_1 n-\ell_2 m=h} a(m) a(n) V_1 \Big( \frac{\ell_2 m}{M} \Big) V_2 \Big( \frac{\ell_1 n}{N} \Big),
\end{equation*}
and 
\begin{equation*}
\mathcal{S}(\ell_1,\ell_2,d,N,M)=\sum_{r \geq 1} \mathcal{D}(\ell_1,\ell_2,rd,N,M),
\end{equation*}
where $a(n)$ are the Fourier coefficients (normalised as in \eqref{fourier}) of $f \in \mathcal{S}_k(4)$ and $k:=\frac{1}{2}+2j$, $j \in \mathbb{N}$ . Without loss of generality we can assume that 
\begin{equation} \label{ellbd}
1 \leq \ell_1,\ell_2 \leq 2N,
\end{equation}
otherwise $\mathcal{D}(\ell_1,\ell_2,h,N,M)$ vanishes trivially. Slightly more generally than in Section \ref{errorarg} we assume 
\begin{equation*}
V_{1,2} \quad \text{supported in} \quad [1,2] \quad \text{and satisfy} \quad V^{(j)}_{1,2} \ll_{j,\varepsilon} C^{j \varepsilon}.
\end{equation*}

\begin{proof}[Proof of Proposition~\ref{spectralest}]
The argument here is similar to the proof of \cite[Proposition~8]{BM1}. Our exposition will be sparse, sketching only the details unique to our situation. We refer the reader to \
\cite[Sections~7 and 8]{BM1} for more details. Let $W$ be a smooth function with bounded derivatives such that 
\begin{equation*}
W(x) \equiv 1 \quad \text{for} \quad 1 \leq x \leq 2 \quad \text{and} \quad \text{supp}(W) \subseteq [1/2,3].
\end{equation*}
After attaching a smooth redundant weight function $W$ we obtain 
\begin{align*}
\mathcal{D}(\ell_1,\ell_2,h,N,M)&=\sum_{\substack{m,n \\ \ell_1 n- \ell_2 m=h}} a(m) a(n) V_1 \Big( \frac{\ell_2 m}{M} \Big) V_2 \Big(\frac{\ell_2 m+h}{N} \Big) W \Big( \frac{\ell_1 n-h}{M} \Big) \\
&=\int_{-\infty}^{\infty} V^{\dagger}_2(z) e \Big( \frac{zh}{N} \Big) \mathcal{D}_z (\ell_1,\ell_2,h,N,M) dz,
\end{align*}
where $V_2^{\dagger}$ denotes the Fourier transform, 
\begin{equation*}
\mathcal{D}_z(\ell_1,\ell_2,h,N,M):=\sum_{\substack{m,n \\ \ell_1 n-\ell_2 m=h}} a(m) a(n) V_{z} \Big( \frac{\ell_2 m}{M} \Big) W \Big( \frac{\ell_1 n-h}{M} \Big),
\end{equation*}
and 
\begin{equation*}
V_z(x):=V_1(x) e \Big( zx \frac{M}{N} \Big).
\end{equation*}
We truncate the $z$-integral at $|z| \leq C^{\varepsilon}$ with a small error, say $O(C^{-100})$. 

With the the notation as in \cite[Lemma~19]{BM1}, we make the choice of parameters
\begin{equation*}
Q:=C \quad \text{and} \quad \delta:=C^{-1}.
\end{equation*} 
Let $w_0$ be a fixed smooth function with support in $[1,2]$ and let 
\begin{equation*}
w(c)=\begin{cases}
w_0 \big( \frac{c}{C} \big) \quad \text{if} \quad 16 \ell_1 \ell_2 \mid c, \\
0 \quad \text{otherwise}.
\end{cases}
\end{equation*}
We see that 
\begin{equation} \label{lambdalbd}
\Lambda \gg C^2 (\ell_1 \ell_2)^{-1-\varepsilon},
\end{equation}
(cf. \cite[Lemma~19]{BM1}).
Applying Jutila's circle method \cite[Lemma~19]{BM1}, \eqref{L2} and arguing as in \cite[pp.~484--485]{BM1} we obtain 
\begin{equation} \label{redz}
\mathcal{D}_z(\ell_1,\ell_2,h,N,M)=\frac{1}{2 \delta} \int_{-\delta}^{\delta} \mathcal{D}_{z,\eta}(\ell_1,\ell_2,h,N,M) d \eta +O(C^{-\frac{2}{5}}),
\end{equation}
where 
\begin{align} 
D_{z,\eta}(\ell_1,\ell_2,h,N,M)&:=\frac{1}{\Lambda} \sum_{16 \ell_1 \ell_2 \mid c} w_0 \Big( \frac{c}{C} \Big) \sum_{\substack{d \hspace{-0.2cm} \mod c \\ (c,d)=1}} \sum_{m,n} a(m) a(n) e \Big( \frac{d}{c}(\ell_1 n-\ell_2 m-h) \Big)  \nonumber \\
& \times W_{\eta M} \Big( \frac{\ell_1 n-h}{M} \Big) V_{z,\eta M} \Big( \frac{\ell_2 m}{M} \Big), \label{Dzeta}
\end{align}
where 
\begin{equation*}
V_{z,\eta}(x):=V_z(x) e (-\eta x)=V_1(x) e \Big( x \Big(z \frac{M}{N} -\eta \Big) \Big) \quad \text{and} \quad W_{\eta} (x):=W(x) e( \eta x).
\end{equation*}
We stress that only Cauchy--Schwarz and \eqref{L2} were used to obtain the error term of $C^{-\frac{2}{5}}$ in \eqref{redz}, not \eqref{ram}. Since $|\eta| \leq C^{-1}=N^{-1000}$ (in particular $\eta \ll M^{-1}$), the functions $V_{z,\eta M}$ and $W_{\eta M}$ are well behaved. In particular, 
\begin{equation*}
W^{(j)}_{\eta M} \ll 1 \quad \text{and} \quad V^{(j)}_{z, \eta M} \ll C^{j \varepsilon} \quad \text{uniformly in} \quad |z| \ll C^{j \varepsilon}. 
\end{equation*}
Observe that $V_{z,\eta M}$ and $W_{\eta M}$ have support in $[1,2]$ and $[\frac{1}{2},3]$ respectively.

Here we will see that a Voronoi summation in $m,n$ variables of \eqref{Dzeta} leads to a twist by a quadratic character depending on $\ell_1$ and $\ell_2$. Applying Lemma \ref{Voronoi} to the $m$ summation in \eqref{Dzeta} we obtain 
\begin{equation} \label{voronoi1}
\sum_m a(m) e \Big({-\frac{dm}{c/\ell_2}} \Big) V_{z,\eta M} \Big( \frac{\ell_2 m}{M} \Big)=\frac{M}{c} \sum_m a(m) e \Big(\frac{ \ell_2 m \overline{d}}{c} \Big) \nu_{\theta}(\gamma_2)  \mathring{V}_{z, \eta M} \Big( \frac{\ell_2 m M}{c^2} \Big),
\end{equation}
where $d \overline{d} \equiv 1 \pmod{c}$, $\widetilde{d}$ is any integer such that $\widetilde{d} \equiv d \pmod{c}$ and
\begin{equation} \label{gamma2defn}
\gamma _2=\begin{pmatrix}
-\widetilde{d} & b_2 \\
c/\ell_2 & X_2
\end{pmatrix} \in \Gamma_0(c/\ell_2). 
\end{equation}
Applying Lemma \ref{Voronoi} to the $n$ summation in \eqref{Dzeta} we obtain
\begin{equation} \label{voronoi2}
\sum_n a(n) e \Big( \frac{d n}{c / \ell_1} \Big) W_{\eta M} \Big( \frac{\ell_1 n-h}{M} \Big)=\frac{M}{c} \sum_n a(n)  \nu_{\theta}(\gamma_1) e \Big({-\frac{\ell_1 n \overline{d}}{c}} \Big) W^{*}_{\eta M} \Big( \frac{h \ell_1 n}{c^2},\frac{M \ell_1 n}{c^2} \Big),
\end{equation}
where  
\begin{equation} \label{gamma1defn}
 \gamma _1=\begin{pmatrix}
\widetilde{d} & b_1 \\
c/\ell_1 & X_1
\end{pmatrix} \in \Gamma_0(c/\ell_1),
\end{equation} 
and 
\begin{equation*}
W^{*}_{\eta M}(z,w):=2 \pi i^{k} \int_{0}^{\infty} W_{\eta M}(y) J_{k-1} (4 \pi \sqrt{yw+z} ) dy.
\end{equation*}
It follows from \eqref{thetamultiplier}, \eqref{epsdefn}, \eqref{gamma2defn}, and \eqref{gamma1defn} that
\begin{equation} \label{quadchar}
\nu_{\theta} (\gamma_1) \nu_{\theta}(\gamma_2)=\Big(\frac{c/\ell_1}{X_1} \Big) \overline{\varepsilon}_{X_1} \Big(\frac{c/\ell_2}{X_2} \Big) \overline{\varepsilon}_{X_2}=-i \Big(\frac{4 \ell_1 \ell_2}{d}  \Big)
\end{equation}
for $d$ such that $(d,c)=1$. Note that in display \eqref{quadchar} we used the feature that the $X_i$ are defined via the determinants of the $\gamma_i$ for $i=1,2$.
Let 
\begin{equation*}
\chi:=\chi_{\ell_1 \ell_2}=\Big(\frac{4 \ell_1 \ell_2}{\bullet}  \Big).
\end{equation*}
Observe that $\chi$ is an even character modulo $4 \ell_1 \ell_2$ (in particular modulo $16 \ell_1 \ell_2$). Combining \eqref{Dzeta}--\eqref{quadchar} and \cite[Lemma~17]{BM1} we obtain 
\begin{multline} \label{usefulD}
D_{z,\eta}(\ell_1,\ell_2,h,N,M):=-\frac{M^2i}{\Lambda C} \sum_{16 \ell_1 \ell_2 \mid c} w_1 \Big( \frac{c}{C} \Big) \frac{1}{c} \sum_{m,n} a(m) a(n) K(\ell_1 n-\ell_2 m,h,c,\chi) \\
\times \sum_{\pm} W_{\pm} \Big( \frac{h \ell_1 n}{c^2},\frac{M \ell_1 n}{c^2} \Big) e \Big( \pm 2 \frac{\sqrt{h \ell_1 n}}{c} \Big) \mathring{V}_{z,\eta M} \Big( \frac{\ell_2 m}{c^2/M} \Big)+O(C^{-A}),
\end{multline} 
where 
\begin{equation*}
w_1(x)=\frac{w_0(x)}{x},
\end{equation*}
and $W_{\pm}$ are as in \cite[Lemma~17]{BM1}. By \cite[(6.15)]{BM1} and the fact that $\mathring{V}_{z,\eta M }$ is a Schwarz class function (cf. \eqref{BM2}) we can restrict to 
\begin{equation} \label{cutoffs}
\ell_1 n \leq \mathcal{N}_0:=\frac{C^{2+\varepsilon} N}{M^2} \quad \text{and} \quad \ell_2 m \leq \mathcal{M}_0:=\frac{C^{2+\varepsilon}}{M},
\end{equation}
with negligible error.  For $\mathcal{N} \leq \mathcal{N}_0$, $\mathcal{M} \leq \mathcal{M}_0$ and $\mathcal{K}>0$, we will restrict the right side of \eqref{usefulD} to subsums 
\begin{equation*}
n \asymp \mathcal{N}, \quad m \asymp \mathcal{M}, \quad \text{and} \quad |\ell_1 n -\ell_2 m| \asymp \mathcal{K}.
\end{equation*}
Here $x \asymp X$ denotes $X \leq x \leq 2X$. The arising subsums are then split into three sums $\Sigma_{+}, \Sigma_{0}$ and $\Sigma_{-}$ according to 
\begin{align*}
 \Sigma_{+} &:  \ell_1 n>\ell_2 m; \\
 \Sigma_{0} &:  \ell_1 n=\ell_2 m; \\
 \Sigma_{-} &:  \ell_1 n<\ell_2 m.
\end{align*}

\subsection{Treatment of $\Sigma_0$}
Let $\chi^{\star}$ be the primitive character of conductor $C_{\chi}^{\star} \mid 4 \ell_1 \ell_2$ inducing $\chi \mathbf{1}_c$ modulo $c$. Then by \cite[Lemma~3.1.3]{Mi} we have
\begin{equation*}
S(0,h,c,\chi)=\mathcal{G}_{\chi^{\star}}(1;C_{\chi}^{\star}) \sum_{\substack{d>0 \\ d \mid (h,c/C_{\chi}^{\star} ) }} d \mu \Big( \frac{c}{d C_{\chi}^{\star}} \Big) \chi^{\star} \Big(\frac{c}{d C_{\chi}^{\star}} \Big) \overline{\chi^{\star}} \Big( \frac{h}{d} \Big).
\end{equation*}
Thus 
\begin{equation} \label{gaussbound}
|S(0,h,c,\chi)| \leq |C_{\chi}^{\star}|^{\frac{1}{2}} \tau(h) (h,c) \ll (\ell_1 \ell_2)^{\frac{1}{2}} \tau(h) (h,c).
\end{equation}
A trivial estimate using \eqref{gaussbound}, Cauchy--Schwarz, \eqref{L2}, \eqref{lambdalbd}, and \eqref{cutoffs} we obtain 
\begin{align*}
\Sigma_0  &\ll \frac{M^2 \tau(h) (\ell_1 \ell_2)^{\frac{1}{2}}}{\Lambda C^{1-\varepsilon}} \sum_{C \leq c \leq 2C} \frac{(h,c)}{c} \sum_{\substack{\ell_1 n \asymp \mathcal{N}, \ell_2 m \asymp \mathcal{M} \\ \ell_1 n =\ell_2 m }} |a(m) a(n)| \\
& \ll  \frac{M^2 \tau(h)^2 (\ell_1 \ell_2)^{\frac{1}{2}}}{\Lambda C^{1-\varepsilon}} \Big( \sum_{m \ll \mathcal{M}} |a(m)|^2 \Big)^{\frac{1}{2}} \Big( \sum_{n \ll \mathcal{N}} |a(n)|^2 \Big)^{\frac{1}{2}} \\
& \ll \frac{M^2 \tau(h)^2 (\ell_1 \ell_2)^{\frac{1}{2}} (\mathcal{N}_0 \mathcal{M}_0)^{\frac{1}{2}}   }{\Lambda C^{1-\varepsilon}} \ll \frac{ (\ell_1 \ell_2)^{\frac{3}{2}+\varepsilon} (NM)^{\frac{1}{2}} }{C^{1-\varepsilon}} \ll C^{-\frac{1}{2}},
\end{align*} 
where the last equality follows from \eqref{largeparam} and \eqref{ellbd}.

\subsection{Spectral treatment of $\Sigma_+$}
Now we consider 
\begin{equation} \label{sigmaplus1}
\Sigma_+=-\frac{iM^2}{\Lambda C} \sum_{\substack{b>0 \\ |b| \asymp \mathcal{K}}} \hspace{0.2cm} \sum_{\substack{\ell_1 n -\ell_2 m=b \\ \ell_1 n \asymp \mathcal{N}, \ell_2 m \asymp \mathcal{M}}} a(m) a(n) \sum_{16 \ell_1 \ell_2 \mid c} \frac{K(b,h,c,\chi)}{c} \Phi \Big(4 \pi \frac{\sqrt{|b|} h}{c} \Big),
\end{equation}
where $\Phi$ is defined on \cite[pg.~487]{BM1} (with a $\sum_{\pm}$ inserted into their definition). In view of the transforms occurring in the Kuznetsov formula, define 
\begin{equation*}
\mathcal{J}^{+}_{2it}(x):=\pi i \frac{J_{2it}(x)-J_{-2it}(x)}{\text{sinh}(\pi t)} \quad \text{and} \quad
\mathcal{J}^{-}_{2it}(x):=4 \cosh(\pi t) K_{2it}(x).
\end{equation*}
Let $\widetilde{\Phi}$, $\dot{\Phi}$, $\Omega$ be defined as they are on \cite[pg.~487]{BM1}.  Also define the quantities
\begin{equation*}
 \mathcal{T}_+:=C^{\varepsilon} \Big(1+\Big(\frac{\mathcal{K} N}{C^2} \Big)^{\frac{1}{4}}+\Big(\frac{\mathcal{M} N}{C^2} \Big)^{\frac{1}{2}} \Big) 
 \quad \text{ and } \quad \mathcal{T}_h:=C^{\varepsilon} \Big(1+\Big( \frac{\mathcal{K} N}{C^2} \Big)^{\frac{1}{4}} \Big). 
\end{equation*}
By the argument on \cite[pg~487]{BM1}, the transforms $\widetilde{\Phi}(t)$ and $\dot{\Phi}(\ell)$ are negligible (cf. \cite[Lemma~16]{BM1}) unless 
\begin{equation*}
|t| \ll \mathcal{T}_{+} \quad \text{and} \quad \ell \ll \mathcal{T}_h
\end{equation*}
respectively. Applying Lemma \ref{kuz} (recalling that $\chi$ is an even Dirichlet character) to the summation over $c$ in \eqref{sigmaplus1} and then truncating the appropriate summations and integrations using the above remarks we obtain 
\begin{equation*} 
\Sigma_{+}=\mathcal{H}_+(h)+\mathcal{M}_+(h)+\mathcal{E}_{+}(h)+O(C^{-A}),
\end{equation*}
where the terms on the right side correspond to the holomorphic, Maass and Eisenstein components of the spectrum. They are 
\begin{align} 
\mathcal{H}_+(h)&:=-\frac{iM^2}{\Lambda C} \int_{0}^{\infty} \sum_{\substack{2 \leq \ell \leq \mathcal{T}_h \\ \ell \equiv 0 \hspace{-0.1cm} \pmod{2}}} \sum_{g \in \mathcal{H}_\ell(16 \ell_1 \ell_2,\chi)} 4 i^{\ell} \Gamma(\ell) J_{\ell-1}(x) \sqrt{h} \rho_g (h) \nonumber \\
& \times \sum_{\substack{b>0 \\ |b| \asymp \mathcal{K}}} w_1 \Big( \frac{4 \pi \sqrt{|b| h}}{Cx} \Big) \sqrt{|b|} \rho_g(b) \gamma_+(b,h,x) \frac{dx}{x}, \label{Hplus}
\end{align}
\begin{align} 
\mathcal{M}_+(h)&:=-\frac{2 i M^2}{\Lambda C} \int_{0}^{\infty} \sum_{\substack{g \in \mathcal{B}_0(16 \ell_1 \ell_2,\chi) \\ |t_g| \leq \mathcal{T}_+}} \frac{\mathcal{J}^{+}_{2it_g}(x)}{\cosh(\pi t_g)} \sqrt{h} \rho_g(h) \nonumber \\
& \times \sum_{\substack{b>0 \\ |b| \asymp \mathcal{K}}} w_1 \Big( \frac{4 \pi \sqrt{|b| h}}{Cx} \Big) \sqrt{|b|} \rho_g(b) \gamma_+(b,h,x) \frac{dx}{x}, \label{Mplus}
\end{align}
\begin{align} 
\mathcal{E}_+(h)&:=-\frac{2i M^2}{\Lambda C} \int_0^{\infty} \frac{1}{4 \pi} \sum_{\mathfrak{a}} \int_{-\mathcal{T}_+}^{\mathcal{T}_+} \frac{\mathcal{J}^{+}_{2it}(x)}{\cosh(\pi t)} \sqrt{h} \rho_{\mathfrak{a}}(h,t)  \nonumber \\
& \times \sum_{\substack{b>0 \\ |b| \asymp \mathcal{K}}} w_1 \Big( \frac{4 \pi \sqrt{|b| h}}{Cx} \Big) \sqrt{|b|} \rho_{\mathfrak{a}}(b,t) dt  \gamma_+(b,h,x) \frac{dx}{x}, \label{Eplus}
\end{align} 
where 
\begin{equation*}
\gamma_+(b,h,x):=\sum_{\substack{\ell_1 n -\ell_2 m=b \\ \ell_1 n \asymp \mathcal{N}, \ell_2 m \asymp \mathcal{M}}} a(m) a(n) \mathring{V}_{z,\eta M} \Big( \frac{x^2 \ell_2 m M}{(4 \pi)^2 |b| h} \Big) \sum_{\pm} W_{\pm} \Big( \frac{x^2 \ell_1 n}{(4 \pi)^2 |b|},\frac{x^2 \ell_1 n M}{(4 \pi)^2 |b| h} \Big) \theta^{\pm}_x \Big( \frac{\ell_2 m}{|b|} \Big),
\end{equation*}
and
\begin{equation*}
\theta^{\pm}_x(y):=\exp \Big(\pm i x \sqrt{1+y} \Big) v \Big( \frac{y}{\mathcal{M}/\mathcal{K}} \Big),
\end{equation*}
and $v$ a redundant smooth weight function of compact support on $[1/4,3]$ that is constantly 1 on $[1/2,2]$. 

\subsection{Spectral treatment of $\Sigma_{-}$}
Now consider
\begin{equation*}
\Sigma_-=-\frac{iM^2}{\Lambda C} \sum_{\substack{b<0 \\ |b| \asymp \mathcal{K}}} \hspace{0.2cm} \sum_{\substack{\ell_1 n -\ell_2 m=b \\ \ell_1 n \asymp \mathcal{N}, \ell_2 m \asymp \mathcal{M}}} a(m) a(n) \sum_{16 \ell_1 \ell_2 \mid c} \frac{K(b,h,c,\chi)}{c} \Phi \Big(4 \pi \frac{\sqrt{|b|} h}{c} \Big). 
\end{equation*}
Define 
\begin{equation*}
\mathcal{T}_{-}:=C^{\varepsilon} \Big(1+ \Big(\frac{\mathcal{M} N}{C^2} \Big)^{\frac{1}{2}} \Big).
\end{equation*}
Applying a similar argument to \cite[pp.~488--489]{BM1} using Lemma \ref{kuz} we obtain
\begin{equation*}
\Sigma_{-}=\mathcal{M}_{-}(h)+\mathcal{E}_{-}(h)+O(C^{-A}),
\end{equation*}
where 
\begin{align*}
\mathcal{M}_{-}(h)&=-\frac{2iM^2}{\Lambda C} \int_{0}^{\infty} \int_{\sigma-i C^{\varepsilon} \mathcal{T}_{-}}^{\sigma+i C^{\varepsilon} \mathcal{T}_{-}} \sum_{\substack{g \in \mathcal{B}_0(16 \ell_1 \ell_2,\chi) \\ |t_g| \leq \mathcal{T}_{-}}} \widehat{\mathcal{J}}^{-}_{2it_g}(s) \frac{\sqrt{h} \rho_g(h) }{\cosh(\pi t_g)} w_1 \Big( \frac{\sqrt{h}}{Cx} \Big) \\
& \times \sum_{\substack{b<0 \\ |b| \asymp \mathcal{K}}} (4 \pi \sqrt{|b|} x )^{-s} \sqrt{|b|} \rho_g(b) \gamma_{-}(b,h,x) \frac{ds}{2 \pi i} \frac{dx}{x},
\end{align*}
\begin{equation*}
\gamma_{-}(b,h,x)=\sum_{\substack{\ell_1 n -\ell_2 m=b \\ \ell_1 n \asymp \mathcal{N}, \ell_2 m \asymp \mathcal{M}}} a(m) a(n) \mathring{V}_{z, \eta M} \Big(\frac{x^2 \ell_2 m M}{h} \Big) \sum_{\pm} W_{\pm} \Big(x^2 \ell_1 n, \frac{x^2 \ell_1 n M}{h}  \Big) e \Big( \pm 2x \sqrt{\ell_1 n} \Big),
\end{equation*}
and $\sigma=\frac{7}{32}+\varepsilon$. An analogous formula holds for the Eisenstein contribution $\mathcal{E}_{-}(h)$. 

\subsection{Summary of setup}
By the above discussion it suffices to estimate the right side of 
 \begin{align} \label{summary}
\mathcal{D}(\ell_1,\ell_2,h,N,M)&=\frac{1}{2 \delta} \int_{-\delta}^{\delta} \int_{-C^{\varepsilon}}^{C^{\varepsilon}} V^{\dagger}_2(z) e \Big( \frac{zh}{N} \Big) \sum_{\mathcal{N} \leq \mathcal{N}_0} \sum_{\substack{\mathcal{M} \leq \mathcal{M}_0 \\ \mathcal{M} \leq \mathcal{N}}} \sum_{\substack{\mathcal{K} \leq \mathcal{N}_0 \\ \mathcal{K} \leq \mathcal{N}}} ( \mathcal{H}_{+}(h)+\mathcal{M}_{+}(h)+\mathcal{E}_{+}(h) ) dz d \eta \\
&+\frac{1}{2 \delta} \int_{-\delta}^{\delta} \int_{-C^{\varepsilon}}^{C^{\varepsilon}} V^{\dagger}_2(z) e \Big( \frac{zh}{N} \Big) \sum_{\mathcal{N} \leq \mathcal{M}_0} \sum_{\mathcal{M} \leq \mathcal{M}_0} \sum_{\mathcal{K} \leq \mathcal{M}_0} (\mathcal{M}_{-}(h)+\mathcal{E}_{-}(h) ) dz d \eta +O(C^{-\frac{1}{3}}) \nonumber,
\end{align}
where $\mathcal{N}$, $\mathcal{M}$ and $\mathcal{K}$ run over numbers $\geq 1$ of the form $\mathcal{N}_0 2^{-\nu}$ or $\mathcal{M}_0 2^{-\nu}$, $\nu \in \mathbb{N}$.

\subsection{Shifted convolutions sums on average}
We now turn our attention to the averaged convolution sum
\begin{equation*}
\mathcal{S}(\ell_1, \ell_2, d, N,M)=\sum_r \mathcal{D}(\ell_1, \ell_2, rd, N,M). 
\end{equation*}
Let 
\begin{equation*}
\beta:=\text{lcm}(16,\ell_1,\ell_2,d) \quad \text{and} \quad B:=\{ n \in \N : p \mid n \implies p \mid \beta \text{ for all primes } p \}.
\end{equation*}
Observe that $\mathcal{D}(\ell_1,\ell_2,rd,N,M)$ vanishes unless $r \asymp N/d$. Recall \eqref{summary}. To begin, consider the decomposition 
\begin{equation*}
\sum_{r \asymp N/d} e \Big( \frac{zrd}{N} \Big) \mathcal{H}_{+}(rd)=\sum_{\substack{r_2 \ll N/d \\ r_2 \in B}} \hspace{0.15cm} \sum_{\substack{ r_1 \asymp N/(dr_2) \\ (r_1,\beta)=1}} 
e \Big( \frac{z r_1 r_2 d}{N} \Big) \mathcal{H}_{+}(r_1 r_2 d),
\end{equation*}
where $\mathcal{H}_{+}$ was defined in \eqref{Hplus}. Observe that the range of integration for $x$ in \eqref{Hplus} is
\begin{equation*}
x \asymp X_{+}:=\frac{\sqrt{\mathcal{K}N}}{C}.
\end{equation*}
We follow verbatim the Mellin inversion argument on \cite[pp.~490--491]{BM1} that separates the variables scattered throughout the various weight functions. In that argument,
\begin{equation*}
S:=C^{\varepsilon} \Big(1+\frac{X_{+} \mathcal{M}}{\sqrt{\mathcal{K} \mathcal{N}}} \Big).
\end{equation*}
This yields 
\begin{equation} \label{Xineq}
\sum_{r_1 \asymp N/(dr_2)} e \Big(\frac{z r_1 r_2 d}{N} \Big) \mathcal{H}_+(r_1 r_2 d) \ll \frac{C^{\varepsilon} M^2}{\Lambda C} \frac{\mathcal{K}^{\frac{1}{4}}}{X^{\frac{1}{2}}_+ \mathcal{N}^{\frac{1}{4}}} S (\Xi_{1,+}^{\mathcal{H}} \hspace{0.1cm} \Xi_{2,+}^{\mathcal{H}} )^{\frac{1}{2}},
\end{equation} 
where 
\begin{equation*}
\Xi_{1,+}^{\mathcal{H}}:=\max_{|u_4| \leq C^{\varepsilon}} \sum_{\substack{0< \ell \leq \mathcal{T}_h \\ \ell \equiv 0 \hspace{-0.1cm} \pmod{2}}} \Gamma(\ell) \sum_{g \in \mathcal{H}_{\ell}(16 \ell_1 \ell_2,\chi)} \Big | \sum_{\substack{r_1 \asymp N/(dr_2) \\ (r_1,\beta)=1}} e \Big( \frac{z r_1 r_2 d}{N} \Big) r_1^{2 \varepsilon+i u_4} \sqrt{r_1 r_2 d} \rho_g(r_1 r_2 d) \Big |^2, 
\end{equation*}
and 
\begin{equation*}
\Xi_{2,+}^{\mathcal{H}}=\max_{\substack{|u_2| \leq C^{\varepsilon} \\ |u_1|, |u_3| \leq S \\ x \asymp X_+}} \sum_{\substack{0<\ell \leq \mathcal{T}_h \\ \ell \equiv 0 \hspace{-0.15cm} \pmod{2}}} |J_{\ell-1}(x)|^2 \Gamma(\ell) \sum_{g \in \mathcal{H}_{\ell}(4 \ell_1 \ell_2, \chi)} \Big | \sum_{|b| \asymp \mathcal{K}} \sqrt{|b|} \rho_g(b) \gamma^*(b) \Big |^2,
\end{equation*}
with
\begin{equation*}
\gamma^*(b)=\Big(\frac{|b|}{\mathcal{K}} \Big)^{\frac{1}{4}+\varepsilon+i u_3} \sum_{\substack{\ell_1 n-\ell_2 m=b \\ \ell_1 n \asymp \mathcal{N}, \ell_2 m \asymp \mathcal{M}}}  \Big( \frac{\ell_1 n}{\mathcal{N}} \Big)^{-\frac{1}{4}+i u_2} \Big(\frac{\ell_2 m}{\mathcal{M}} \Big)^{-\varepsilon+i u_1} a(m) a(n).
\end{equation*}
The same analysis works mutatis mutandis for the Eisenstein and Maass spectrum, giving analogous expressions for $\Xi^{\mathcal{E}}_{i,+}$ and $\Xi^{\mathcal{M}}_{i,+}$
for $i=1,2$. Note that breakdown of the Archimedean weight functions in the Cauchy-Schwarz inequality in $x$ and $g$ is analogous: 
 $\Xi^{\mathcal{M}}_{1,+}$ has a factor of $1/\cosh(\pi t_g)$ and $\Xi^{\mathcal{M}}_{2,+}$ has a factor $|\mathcal{J}^{+}_{2it_g}(x)|^2/\cosh(\pi t_g)$.
 
We now bound the various $\Xi_{i,+}^{\star}$ for $i=1,2$. Applying the argument at the beginning of \cite[pg~492]{BM1} using \eqref{L2} and \eqref{wilton} we obtain
\begin{equation} \label{lsieveinput}
\sum_b |\gamma^*(b)|^2 \ll C^{\varepsilon} \frac{\mathcal{N} \mathcal{M}}{\ell_1 \ell_2},
\end{equation}
uniformly in $u_1,u_2$ and $u_3$. Also by \cite[pg~492]{BM1} we have 
\begin{equation}  \label{Jbd}
J_{\ell-1}(x) \ll C^{\varepsilon} x^{-\frac{1}{2}},
\end{equation}
uniformly for all $x>0$ and $1 \leq \ell \leq \mathcal{T}_{h}$. Thus by Lemma \ref{largesieve}, \eqref{lsieveinput} and \eqref{Jbd} we have 
\begin{equation} \label{Hplus2bd}
\Xi^{\mathcal{H}}_{2,+} \ll  \frac{C^{\varepsilon}}{X_{+}} \Big( \mathcal{T}_h^2+\frac{\mathcal{K}}{\ell_1 \ell_2} \Big) \frac{\mathcal{N} \mathcal{M}}{\ell_1 \ell_2}.
\end{equation}
Similarly, 
\begin{equation} \label{EMplus2bd}
|\Xi_{2,+}^{\mathcal{E}}|+|\Xi_{2,+}^{\mathcal{M}}| \ll \frac{C^{\varepsilon}}{X_{+}} \Big( \mathcal{T}_+^2+\frac{\mathcal{K}}{\ell_1 \ell_2} \Big) \frac{\mathcal{N} \mathcal{M}}{\ell_1 \ell_2}.
\end{equation}
By \eqref{ramult} and the fact that integral weight Hecke cusp forms satisfy Deligne's bound (also see Remark \ref{naiveapproach})
we have 
\begin{equation} \label{multapply}
\Xi^{\mathcal{H}}_{1,+} \ll \max_{|u_4| \leq C^{\varepsilon}} C^{\varepsilon} \sum_{\delta \mid 16 \ell_1 \ell_2} \sum_{\substack{2 \leq \ell \leq \mathcal{T}_h \\ \ell \equiv 0 \hspace{-0.1cm} \pmod{2}}} \Gamma(\ell) \sum_{g \in \mathcal{H}_{\ell}(16 \ell_1 \ell_2,\chi)} \Big | \sum_{\substack{r_1 \asymp N/(dr_2) \\ (r_1,\beta)=1  }} \alpha(r_1) \sqrt{r_1 \delta} \rho_g(r_1 \delta)  \Big |^2,
\end{equation}
where 
\begin{equation*}
\alpha(r_1)=\alpha_{r_2 d,u_4}(r_1)=e \Big( \frac{z r_1 r_2 d}{N} \Big) r_1^{2 \varepsilon+iu_4}.
\end{equation*}
Applying Lemma \ref{largesieve} to the right side of \eqref{multapply} yields
\begin{equation} \label{Hplus1bd}
\Xi^{\mathcal{H}}_{1,+} \ll C^{\varepsilon} \sum_{\delta \mid 16 \ell_1 \ell_2 } \Big( \mathcal{T}_h^2+\frac{N \delta}{dr_2 \ell_1 \ell_2} \Big) \frac{N}{dr_2} \ll C^{\varepsilon} \Big( \mathcal{T}_h^2+\frac{N}{dr_2} \Big) \frac{N}{dr_2}. 
\end{equation}
Observe that all cusps of $\Gamma_0(16 \ell_1 \ell_2)$ are singular relative to $\chi$ by Lemma \ref{youngsingular} (i.e. the conductor of $\chi$ is of the form $Z$ or $4Z$ where $Z$ is odd and squarefree since it is a quadratic character). Thus we can apply Lemma \ref{speceisen} and Lemma \ref{largesieve} to similarly obtain 
\begin{equation} \label{Eplus1bd}
\Xi^{\mathcal{E}}_{1,+} \ll C^{\varepsilon} \Big( \mathcal{T}_{+}^2+\frac{N}{dr_2} \Big) \frac{N}{dr_2}.
\end{equation}
Applying Theorem \ref{BM1theorem} we obtain 
\begin{align} \label{Mplus1bd}
\Xi_{1,+}^{\mathcal{M}}&=\max_{|u_4| \leq C^{\varepsilon}} \sum_{\substack{|t_g| \leq \mathcal{T}_+ \\ g \in \mathcal{B}_0(16 \ell_1 \ell_2,\chi) }} \frac{1}{\text{cosh}(\pi t_g)} \Big | \sum_{\substack{r_1 \asymp N/(dr_2) \\ (r_1,\beta)=1}} \alpha(r_1) \sqrt{r_1 r_2 d} \rho_g(r_1 r_2 d) \Big |^2 \nonumber \\
& \ll C^{\varepsilon} (\ell_1 \ell_2,r_2 d) \Big( \mathcal{T}_{+}+\frac{(r_2 d)^{\frac{1}{2}}}{(\ell_1 \ell_2)^{\frac{1}{2}}} \Big) \Big( \mathcal{T}_{+}+\frac{N}{d r_2 (\ell_1 \ell_2)^{\frac{1}{2}} } \Big) \frac{N}{dr_2}.
\end{align}
Combining \eqref{Hplus2bd}, \eqref{EMplus2bd}, \eqref{Hplus1bd}, \eqref{Eplus1bd}, \eqref{Mplus1bd}, we obtain
\begin{multline} \label{combinedeq}
\big(|\Xi^{\mathcal{H}}_{1,+}|+|\Xi^{\mathcal{M}}_{1,+}|+|\Xi^{\mathcal{E}}_{1,+}| \big)\big(|\Xi^{\mathcal{H}}_{2,+}|+|\Xi^{\mathcal{M}}_{2,+}|+|\Xi^{\mathcal{E}}_{2,+}| \big) \ll \frac{N}{dr_2} \frac{(\ell_1 \ell_2,r_2 d) \mathcal{N} \mathcal{M}}{\ell_1 \ell_2}   \frac{C^{\varepsilon}}{X_+} \\
\times \Big( \Big( \mathcal{T}_++\frac{(r_2 d)^{\frac{1}{2}}}{(\ell_1 \ell_2)^{\frac{1}{2}}} \Big) \Big( \mathcal{T}_++\frac{N}{d r_2 (\ell_1 \ell_2)^{\frac{1}{2}}}\Big) +\frac{N}{dr_2}  \Big) \Big( {\mathcal{T}_{+}}^2+\frac{\mathcal{K}}{\ell_1 \ell_2} \Big).
\end{multline}
Inserting \eqref{combinedeq} into \eqref{Xineq} (recalling that \eqref{Xineq} has both $\mathcal{M}$ and $\mathcal{E}$ analogues), the ensuing brute force computation on \cite[pp.~493--494]{BM1} guarantees that 
\begin{multline} \label{plusconc}
\sum_{\substack{r_1 \asymp N/(dr_2) \\ (r_1, \beta)=1}} e \Big( \frac{zr_1 r_2 d}{N} \Big) \big(\mathcal{H}_{+}(r_1 r_2 d) +\mathcal{M}_{+}(r_1 r_2 d) +\mathcal{E}_{+}(r_1 r_2 d)   \big)
 \\
\ll C^{\varepsilon} (\ell_1 \ell_2,d)^{\frac{1}{2}} \Big( \frac{N}{d^{\frac{1}{2}}}+\frac{N^{\frac{5}{4}} M^{\frac{1}{4}}}{d (\ell_1 \ell_2)^{\frac{1}{4}}}+\frac{N^{\frac{3}{4}} M^{\frac{1}{4}}} {d^{\frac{1}{4}}}+\frac{NM^{\frac{1}{2}}}{d^{\frac{3}{4}} (\ell_1 \ell_2)^{\frac{1}{2}} }+\frac{NM^{\frac{1}{2}}}{d}  \Big).
\end{multline}

One can follow the argument \cite[pp.~494--495]{BM1} appealing to Lemmas \ref{largesieve} and \ref{speceisen}, as well as \eqref{weakversion}, whenever their principal character analogues are used, to conclude that
\begin{equation} \label{minusconc}
\sum_{\substack{r_1 \asymp N/(dr_2) \\ (r_1, \beta)=1}} e \Big( \frac{zr_1 r_2 d}{N} \Big) \big(\mathcal{M}_{-}(r_1 r_2 d) +\mathcal{E}_{-}(r_1 r_2 d)     \big) \ll C^{\varepsilon}  d^{\theta} (\ell_1 \ell_2,d)^{\frac{1}{2}} \Big(\frac{M^{\frac{1}{4}} N^{\frac{3}{4}}  }{d^{\frac{1}{2}}}+\frac{M^{\frac{3}{4}} N^{\frac{3}{4}} }{d}  \Big).
\end{equation}
Summing \eqref{plusconc} and \eqref{minusconc} over $r_2 \in B$ using Rankin's trick, and using $\theta \leq 1/4$, we obtain Proposition \ref{spectralest}.
\end{proof}

\section{Critical range and $\alpha n^2$ modulo one} \label{critsec}
\begin{proof}[Proof of Theorem~\ref{critrangethm}]
Recall that 
\begin{equation} \label{rangerecall}
1 \leq M \leq p/2, \quad   p^{\frac{1}{2}-\frac{1}{10}} \leq N \leq p^{\frac{1}{2}+\frac{1}{10}},
\end{equation}
and consider
\begin{equation*}
\sum_{N \leq n_1, n_2 \leq 2N} \Big | \sum_{M \leq m \leq 2M} S(m, c n_1,p) \overline{S(m, c n_2,p)} \Big |
\end{equation*} 
for $c \in \mathbb{F}^{\times}_p$. The estimates we obtain will not depend on $c$. Substituting \eqref{salieval} we obtain 
\begin{equation} \label{critset}
p \sum_{N \leq n_1,n_2 \leq 2N} \Big | \sum_{M \leq m \leq 2 M} \sum_{\substack{u,v \hspace{-0.2cm} \pmod{p} \\ u^2 \equiv cmn_1 \hspace{-0.1cm} \pmod{p} \\ v^2 \equiv cmn_2 \hspace{-0.1cm} \pmod{p}}} e \Big( \frac{2(u+v)}{p}  \Big)  \Big |.
\end{equation}
Let $R:=R(M,N,p)$ denote the multiple summation in \eqref{critset}, excluding the factor of $p$. We write 
\begin{equation*}
R:=R_1+R_{-1} 
\end{equation*}
where $R_i$ restricts the summation variables in the definition of $R$ to
\begin{equation} \label{exampcase}
\Big(\frac{n_1}{p} \Big)=\Big(\frac{n_2}{p} \Big)=\Big(\frac{cm}{p} \Big)=i.
\end{equation}
We first consider $R_1$. For $\ell \in \mathbb{F}_p$, define
\begin{equation*}
A_{\ell,c}:=\sum_{M \leq m \leq 2 M} \hspace{0.15cm} \sum_{t^2 \equiv cm \hspace{-0.2cm} \pmod{p}} e \Big( \frac{2 t \ell}{p} \Big), 
\end{equation*}
and 
\begin{equation} \label{newS}
\mathbb{S}_{\ell}:= \Big \{(u,v)  \in (\mathbb{F}_p^{\times})^2:  (u^2, v^2) \pmod{p} \in   [N,2N] \times [N,2N]  \quad \text{ and } \quad u + v \equiv \ell \hspace{-0.2cm} \pmod{p} \Big \}.
\end{equation}
Applying the triangle inequality we obtain 
\begin{align} \label{Rtri} 
R_1&=\frac{1}{2} \sum_{N \leq n_1, n_2 \leq 2N}   \Big | \sum_{M \leq m \leq 2M} \hspace{0.15cm} \sum_{\substack{t \hspace{-0.2cm} \pmod{p} \\ t^2 \equiv cm}} \hspace{0.15cm} \sum_{\substack{u,v \hspace{-0.2cm} \pmod{p} \\ u^2 \equiv n_1 \hspace{-0.2cm} \pmod{p} \\ v^2 \equiv n_2 \hspace{-0.2cm} \pmod{p}}} e \Big( \frac{2 t(u+v)}{p}  \Big)  \Big | \nonumber \\
 & \leq \frac{1}{2} \sum_{\ell \hspace{-0.2cm} \pmod{p}}  | A_{\ell,c} | |\mathbb{S}_{\ell}|.
\end{align}

Observe that the contribution from $\ell \equiv 0 \pmod{p}$ to the right hand side of \eqref{Rtri} is
\begin{equation}  \label{trivzero}
|A_{0,c}| |\mathbb{S}_0| \ll MN.
\end{equation}  
Thus it suffices to consider the right side of \eqref{Rtri} for $\ell \not \equiv 0 \pmod{p}$. For each $\ell \not \equiv 0 \pmod{p}$, we argue that the elements in $\mathbb{S}_{\ell}$ satisfy a strong diophantine property. Recall that $(u,v) \in \mathbb{S}_{\ell}$ means that 
\begin{equation*} 
u+v \equiv \ell  \pmod{p},
\end{equation*}
and $u^2, v^2 \pmod{p}$ lie in the interval $[N,2N]$. After an algebraic manipulation we see that $(u,v) \in \mathbb{S}_{\ell}$ must satisfy
\begin{equation} \label{cong}
\overline{\ell}^2 (u^2-v^2)^2+\ell^2 \equiv 2(u^2+v^2) \pmod{p}.
\end{equation} 
We set
\begin{equation*}
\alpha_{\ell}:=\frac{\overline{\ell}^2}{p} \in \mathbb{Q} / \mathbb{Z} \quad \text{and} \quad \beta_{\ell}:=\frac{\ell^2}{p} \in \mathbb{Q} / \mathbb{Z}.
\end{equation*}
Thus \eqref{cong} implies 
\begin{equation} \label{diophantine}
\| \alpha_{\ell} (u^2-v^2)^2 +\beta_{\ell} \| \leq \frac{8N}{p},
\end{equation}
where $\| \bullet \|$ denotes the distance to the closest integer. Therefore the pairs $(u,v) \in \mathbb{S}_{\ell}$ produce elements of the sequence $\{\alpha_{\ell} n^2\}_{0 \leq n \leq N}$ modulo $1$ and lie in a cluster around $-\beta_{\ell}$. 

It is now sufficient to bound the right side of \eqref{Rtri}. We fix a tuple 
\begin{equation*}
\boldsymbol{\delta}:=(\delta_j) \in (0,1)^6
\end{equation*}
to be chosen later. Let 
\begin{equation} \label{Ldef1}
\mathcal{L}(c,\boldsymbol{\delta}):=\{\ell \in \mathbb{F}^{\times}_p: |A_{\ell,c}| \geq M p^{-\delta_1} \}.
\end{equation} 
Thus \eqref{trivzero} and \eqref{Ldef1} imply 
\begin{equation} \label{Sbd1}
R_1 \ll MN+MN^2 p^{-\delta_1}+\sum_{\ell \in \mathcal{L}(c,\boldsymbol{\delta})} |A_{\ell,c}| |\mathbb{S}_{\ell}|.
\end{equation}
As explained in Section \ref{highlevel}, the distribution of the $\alpha_{\ell} n^2$ (which governs the size of $\mathbb{S}_{\ell}$) is sensitive to the convergents of the continued fraction expansion of $\alpha_{\ell}$. Thus we will consider a partition of $\mathcal{L}(c,\boldsymbol{\delta})$,
\begin{equation} \label{partition}
\mathcal{L}(c,\boldsymbol{\delta})=\mathcal{H}_1(c,\boldsymbol{\delta}) \cup \mathcal{H}_2(c,\boldsymbol{\delta}) \cup \mathcal{H}_3(c,\boldsymbol{\delta}),
\end{equation}
defined below.  By convention, all convergents in the following definitions and arguments are denoted by irreducible fractions. The sets in \eqref{partition} are given by
\begin{multline} \label{H1def}
\mathcal{H}_1(c,\boldsymbol{\delta}):= \Big \{ \ell \in \mathcal{L}(c,\boldsymbol{\delta}) : \quad  \text{for all} \quad 1 \leq h \leq p^{\delta_5}, \\
 h \alpha_{\ell} \text{ has a convergent } \frac{a_{\ell,h}}{b_{\ell,h}} \text{ such that } b_{\ell,h}  \in [p^{\delta_2},p^{\delta_3}]    \Big \};
\end{multline}

\begin{multline} \label{H2def}
\mathcal{H}_2(c,\boldsymbol{\delta}):= \Big \{ \ell \in \mathcal{L}(c, \boldsymbol{\delta}) : 
\quad \text{there exists} \quad 1 \leq h_{\ell} \leq p^{\delta_5} \\
\text{ for which }  h_{\ell} \alpha_{\ell} \text{ has no convergent } \frac{a}{b} \text{ with } b  \in [p^{\delta_2},p^{\delta_4} ]    \Big \}; 
\end{multline}
and
\begin{equation} \label{H3def}
\mathcal{H}_3 (c,\boldsymbol{\delta}):=\mathcal{L}(c,\boldsymbol{\delta}) \setminus (\mathcal{H}_1( c,\boldsymbol{\delta})  \cup \mathcal{H}_2(c,\boldsymbol{\delta}) ). 
\end{equation}

Our argument will require
$\boldsymbol{\delta} \in (0,1)^6$ to satisfy some constraints. We record them here for convenience:
\begin{align} 
\delta_2 <\delta_3&<\delta_4; \label{delta234}  \\
 \delta_6 & < \frac{1}{5}; \label{delta6} \\
16N p^{\delta_2+\delta_5}& <\frac{p}{2} \label{delta25}.
\end{align}

\begin{remark}
The constraint \eqref{delta234} implies that the sets \eqref{H1def} and \eqref{H2def} are well-defined.
The constraints \eqref{delta234}--\eqref{delta25} will be used in the treatment of $\mathcal{H}_3(c,\boldsymbol{\delta})$ in Section \ref{H3}.
\end{remark}

Also note that the elements in $\mathcal{L}(c,\boldsymbol{\delta})$ depend on $c$ by definition. However, the criteria for an element $\ell \in \mathcal{L}(c,\boldsymbol{\delta})$ to belong to a $\mathcal{H}_{j}(c,\boldsymbol{\delta})$ is independent of $c$.

 First we bound $|\mathcal{L}(c,\boldsymbol{\delta})|$ via a second moment estimate of the $A_{\ell,c}$. This will be useful in some of the following arguments. We have 
\begin{equation} \label{momA}
\sum_{\ell \hspace{-0.2cm} \pmod{p}} |A_{\ell,c}|^2= \sum_{M \leq m,m^{\prime} \leq 2 M} \sum_{\substack{t^2 \equiv cm \hspace{-0.2cm} \pmod{p} \\
{t^{\prime}}^2 \equiv cm^{\prime} \hspace{-0.2cm} \pmod{p} }}  \sum_{\ell \hspace{-0.2cm} \pmod{p}} e \Big( \frac{2 \ell(t-t^{\prime})}{p} \Big) \ll pM.
\end{equation}
Using \eqref{momA} we obtain 
\begin{equation} \label{mombd}
|\mathcal{L}(c,\boldsymbol{\delta})| \leq \sum_{\ell \hspace{-0.2cm} \pmod{p}} \Big( \frac{|A_{\ell,c}| p^{\delta_1}}{M} \Big)^2 \ll \frac{p^{1+2 \delta_1}}{M},
\end{equation} 
uniformly in $c$.

\subsection{Treatment of $\mathcal{H}_1(c,\boldsymbol{\delta})$} \label{H1}
We remark that for $\ell \in \mathcal{H}_1(c,\boldsymbol{\delta})$, the sequence 
\begin{equation} \label{seq}
\mathcal{N}_{\ell}:=\{\alpha_{\ell}n^2\}_{1 \leq n \leq N}
\end{equation}
has small discrepancy. Thus to bound their contribution to \eqref{Sbd1}, we obtain an upper bound for $|\mathbb{S}_{\ell}|$. Let $\mathcal{D}(\mathcal{N}_{\ell})$ denote the discrepancy of $\mathcal{N}_{\ell}$. The number of $n \in [1,N]$ such that 
\begin{equation} \label{dio}
\| \alpha_{\ell} n^2+\beta_{\ell} \| \leq \frac{8 N}{p}
\end{equation}
is 
\begin{equation} \label{orig}
\ll \frac{N^2}{p}+N \mathcal{D}(\mathcal{N}_{\ell}).
\end{equation}
 In order to bound $\mathcal{D}(\mathcal{N}_{\ell})$, we consider for each $1 \leq h \leq p^{\delta_5}$,
\begin{equation*}
E_{\ell,h}:=\sum_{1 \leq n \leq N}  e (h \alpha_{\ell} n^2 ).
\end{equation*}
By definition, the continued fraction expansion of $h \alpha_{\ell}$ has a convergent 
\begin{equation*}
\frac{a_{\ell,h}}{b_{\ell,h}} \quad \text{with} \quad b_{\ell,h} \in [p^{\delta_2},p^{\delta_3}].
\end{equation*}
Moreover,
\begin{equation*}
\Big | h \alpha_{\ell}-\frac{a_{\ell,h}}{b_{\ell,h}} \Big | \leq \frac{1}{b_{\ell,h}^2}.
\end{equation*}
Applying Weyl's inequality \cite[Lemma~2.4]{Va} we obtain
\begin{equation} \label{weylapp}
E_{\ell,h} \ll N^{1+\varepsilon}  \Big(p^{-\delta_2}+N^{-1}+\frac{p^{\delta_3}}{N^2} \Big)^{\frac{1}{2}} \ll  N^{\varepsilon}(N p^{-\frac{\delta_2}{2}}+N^{\frac{1}{2}}+p^{\frac{\delta_3}{2}}),
\end{equation}
which is uniform in both $h, \ell$ and $c$. Next, applying the Erd\"{o}s--Turan inequality \cite[(2.42) on pg.~114]{KN} and \eqref{weylapp} we obtain uniformly in $\ell$ and $c$,
\begin{align} \label{discrep}
N \mathcal{D}(\mathcal{N}_{\ell}) &\ll N p^{-\delta_5}+\sum_{1 \leq h \leq p^{\delta_5}} \frac{|E_{\ell,h}|}{h} \nonumber \\
& \ll (Np)^{\varepsilon}( N p^{-\delta_5}+ N p^{-\frac{\delta_2}{2}}+N^{\frac{1}{2}}+p^{\frac{\delta_3}{2}}).
\end{align}
Thus the right side \eqref{orig} is 
\begin{equation} \label{Slbd}
\ll (Np)^{\varepsilon} \Big( \frac{N^2}{p}+N p^{-\delta_5} +N  p^{-\frac{\delta_2}{2}}+N^{\frac{1}{2}} +p^{\frac{\delta_3}{2}} \Big),
\end{equation}
uniformly in $\ell$ and $c$. Observe that for each $\ell \in \mathcal{L}(c,\boldsymbol{\delta})$ (and in particular $ \ell \in \mathcal{H}_1(c,\boldsymbol{\delta})$) and $n \in [1,N]$ satisfying \eqref{dio}, there is at most one element $(u,v) \in \mathbb{S}_{\ell}$ such that (cf. \eqref{diophantine})
\begin{equation*}
u^2-v^2 \equiv \pm n \pmod{p}.
\end{equation*}
The same statement holds when $n=0$. Observing that $|A_{\ell,c}| \ll M$ and using \eqref{mombd} and \eqref{Slbd} we obtain
\begin{equation} \label{H1bd}
\sum_{\ell \in \mathcal{H}_1(c,\boldsymbol{\delta})} |A_{\ell,c}| |\mathbb{S}_{\ell}| \ll (Np)^{\varepsilon} ( p^{2 \delta_1} N^2+N p^{1+2 \delta_1-\delta_5}+N p^{1+2 \delta_1-\frac{\delta_2}{2}}+N^{\frac{1}{2}} p^{1+2 \delta_1}+p^{1+2 \delta_1+\frac{\delta_3}{2}}).
\end{equation}

\subsection{Treatment of $\mathcal{H}_2(c,\boldsymbol{\delta})$} \label{H2}
We draw on the intuition that membership of $\mathcal{H}_2(c,\boldsymbol{\delta})$ is a rare event. Thus we give an upper bound for $|\mathcal{H}_2(c,\boldsymbol{\delta})|$
that is stronger than that implied by \eqref{mombd}.

For each $\ell \in \mathcal{H}_2(c,\boldsymbol{\delta})$, fix $1 \leq h_{\ell} \leq p^{\delta_5}$ such that $h_{\ell} \alpha_{\ell}$ has no convergent 
\begin{equation*}
\frac{a}{b} \quad \text{with}  \quad b \in [p^{\delta_2},p^{\delta_4}].
\end{equation*}
Let $a_{\ell}/b_{\ell}$ be the convergent to $h_{\ell} \alpha_{\ell}$ with $b_{\ell} \in [1,p^{\delta_2})$ maximal and let $a^{*}_{\ell}/b^{*}_{\ell}$ denote the next convergent. Both such convergents exist. Then we must have $b^*_{\ell}>p^{\delta_4}$ and we know that 
\begin{equation*}
\Big | h_{\ell} \alpha_{\ell}-\frac{a_{\ell}}{b_{\ell}} \Big | \leq \frac{1}{b_{\ell} b^{*}_{\ell}}<\frac{1}{b_{\ell} p^{\delta_4}}.
\end{equation*}
Therefore 
\begin{equation} \label{mulineq}
|b_{\ell} h_{\ell} \overline{\ell}^2- p a_{\ell} |<p^{1-\delta_4}.
\end{equation}
Let $\mu_{\ell} \in \Z \cap (-p/2,p/2]$ be such that
\begin{equation} \label{mucong1}
\mu_{\ell} \equiv b_{\ell} h_{\ell} \overline{\ell}^2 \pmod{p}.
\end{equation}
Thus \eqref{mulineq} guarantees
\begin{equation*}
|\mu_{\ell}|<p^{1-\delta_4}.
\end{equation*}

Conversely, consider the congruence 
\begin{equation} \label{convcong}
\mu \equiv b h \overline{\ell}^2 \pmod{p}.
\end{equation}
Given any
\begin{equation*}
\mu \in (-p^{1-\delta_4}, p^{1-\delta_4}), \quad  b \in [1,p^{\delta_2}) \quad  \text{and} \quad h \in [1,p^{\delta_5}), 
\end{equation*}
then these determine $\ell$ in \eqref{convcong} up to sign. Thus
\begin{equation*}
|\mathcal{H}_2(c,\boldsymbol{\delta})| \ll p^{1-\delta_4+\delta_2+\delta_5},
\end{equation*}
uniformly in $c$. Since $|A_{\ell,c}| \ll M$ and $|\mathbb{S}_{\ell}| \ll N$ we obtain 
\begin{equation} \label{S2bd}
\sum_{\ell \in \mathcal{H}_2(c,\boldsymbol{\delta})} |A_{\ell,c}| |\mathbb{S}_{\ell}| \ll MNp^{1-\delta_4+\delta_2+\delta_5}.
\end{equation}

\subsection{Treatment of $\mathcal{H}_3 (c,\boldsymbol{\delta})$} \label{H3}
Recall \eqref{delta234}.
We unpack the definition of $\mathcal{H}_3(c,\boldsymbol{\delta})$. Let $\ell \in \mathcal{H}_3(c,\boldsymbol{\delta})$. Since $\ell \not \in \mathcal{H}_1(c,\boldsymbol{\delta})$, there exists $1 \leq h_{\ell} \leq p^{\delta_5}$ such that $h_{\ell} \alpha_{\ell}$ does not have a convergent 
\begin{equation*} 
\frac{a}{b} \quad \text{with} \quad b \in [p^{\delta_2},p^{\delta_3}].
\end{equation*}
For each $\ell$, fix such a choice $h_{\ell}$. Furthermore, since $\ell \not \in \mathcal{H}_2(c,\boldsymbol{\delta})$, $h_{\ell} \alpha_{\ell}$ is guaranteed to have a convergent 
\begin{equation*} 
\frac{a^{*}_{\ell}}{b^{*}_{\ell}} \quad \text{such that} \quad  b^{*}_{\ell} \in (p^{\delta_3},p^{\delta_4}].
\end{equation*}
Take such a convergent with $b^{*}_{\ell}$ minimal. 

For each $\ell$, denote 
\begin{equation*}
\mathbb{V}_{\ell}:=\Big \{0 \leq n \leq N: \| \alpha_{\ell} n^2+\beta_{\ell}  \| \leq \frac{8 N}{p}   \Big \}.
\end{equation*}
For each $p^{\delta_3} \leq U \leq p^{\delta_4}$ and $0 \leq V \leq N$ we define
\begin{equation*}
\mathcal{E}_c(U,V,\boldsymbol{\delta}):=\{\ell \in \mathcal{H}_3(c,\boldsymbol{\delta}): b^{*}_{\ell} \in [U,2U] \quad \text{and} \quad |\mathbb{V}_{\ell}| \in [V,2V]  \}.
\end{equation*}
Uniformly in $U$, $V$, $c$ and $\boldsymbol{\delta}$ (satisfying \eqref{delta234}--\eqref{delta25})
we have by \eqref{mombd} that
\begin{equation} \label{EUVbd}
|\mathcal{E}_c(U,V,\boldsymbol{\delta})| \ll \frac{p^{1+2 \delta_1}}{M}.
\end{equation}

We prove that the contribution to \eqref{Sbd1} from all 
\begin{equation} \label{speciall}
\ell \in \bigcup_{0 \leq V \leq Np^{-\delta_6}}  \mathcal{E}_c(U,V,\boldsymbol{\delta})=:\mathcal{B}_{\delta_6}
\end{equation}
is small. Applying \eqref{EUVbd}, $|A_{\ell,c}| \ll M$ and the remark following \eqref{Slbd} we see that the contribution to \eqref{Sbd1} from $\ell \in \mathcal{B}_{\delta_6}$ is 
\begin{equation} \label{smallbd}
\ll N p^{1+2 \delta_1-\delta_6}.
\end{equation}

We now consider the case when $V$ is large. Observe that $\mathcal{H}_3(c,\boldsymbol{\delta}) \setminus  \mathcal{B}_{\delta_6}$ can be covered by $O(\log^2 p)$ sets $\mathcal{E}(U,V,\boldsymbol{\delta})$ with
\begin{align} 
 p^{\delta_3} \leq & U \leq p^{\delta_4}; \nonumber \\
 Np^{-\delta_6} \leq & V \leq N.  \label{Vbound}
\end{align}
 From \eqref{smallbd} and the remark following \eqref{Slbd} we obtain 
\begin{equation} \label{refined}
\sum_{\ell \in \mathcal{H}_3(c,\boldsymbol{\delta})} |A_{\ell,c}| |\mathbb{S}_{\ell} | \ll p^{1+2 \delta_1-\delta_6} N+ M \log^2 p \max_{p^{\delta_3} \leq U \leq p^{\delta_4}} \max_{N p^{-\delta_6} \leq V \leq N} V \cdot |\mathcal{E}_c(U,V,\boldsymbol{\delta})|.
\end{equation}  
Thus we need to bound $V \cdot |\mathcal{E}_c(U,V,\boldsymbol{\delta})|$.  For each $\ell \in \mathcal{E}_c(U,V,\boldsymbol{\delta})$, we now construct an algebraic set $\mathfrak{C}_{\ell} \subseteq \mathbb{F}^3_p$ with restricted variables. Arrange the numbers $n_{\ell,j} \in \mathbb{V}_{\ell}$ with order
 \begin{equation} \label{string}
0 \leq n_{\ell,1}<n_{\ell,2}<\cdots<n_{\ell,|\mathbb{V}_{\ell}|} \leq N.
 \end{equation}
 The average consecutive gap between these numbers is 
 \begin{equation*}
 \frac{N}{|\mathbb{V}_{\ell}|} \asymp \frac{N}{V} \ll p^{\delta_6}.
 \end{equation*}
 More than $|\mathbb{V}_{\ell}|/2$ consecutive gaps are less than or equal to $2N/|\mathbb{V}_{\ell}|$. By the pigeonhole principle there exists an integer $1 \leq d_{\ell} \leq 2N/|\mathbb{V}_{\ell}|$ that is repeated as a consecutive gap at least $|\mathbb{V}_{\ell}|^2/4N \gg 1$ times (note that \eqref{rangerecall},
 \eqref{delta6}, and \eqref{Vbound} guarantee that $|\mathbb{V}_{\ell}|^2/4N \gg 1$).  Thus we define
\begin{multline} \label{syscong1}
 \mathfrak{C}_{\ell}:= \big \{ (n,A,B) \in [1,N] \times [-8N,8N]^2 : \overline{\ell}^2 n^2+\ell^2 \equiv A \pmod{p} \\
    \quad \text{and} \quad \overline{\ell}^2 (n+d_{\ell})^2+\ell^2  \equiv B \pmod{p} \big \}.
\end{multline}
We form
\begin{equation*}
\mathfrak{U}_{c}(U,V,\boldsymbol{\delta}):= \bigcup_{\ell \in \mathcal{E}_c(U,V,\boldsymbol{\delta})} \{\ell \} \times \mathfrak{C}_{\ell} \subseteq \mathbb{F}^4_p,
\end{equation*}
and study this object now.

The above discussion implies the pointwise bound $|\mathfrak{C}_{\ell}| \gg V^2/N$, so 
 \begin{equation} \label{intuit}
| \mathfrak{U}_{c}(U,V,\boldsymbol{\delta}) | \gg \frac{V^2 |\mathcal{E}_c(U,V,\boldsymbol{\delta})|}{N}.
 \end{equation}
Thus it suffices to establish an upper bound for $|\mathfrak{U}_c(U,V,\boldsymbol{\delta})|$. We count the number of $Q:=(\ell;n,A,B) \in  \mathfrak{U}_{c}(U,V,\boldsymbol{\delta})$ with $A \equiv B \pmod{p}$ and $A \not \equiv B \pmod{p}$ separately.

Given $\ell \in \mathcal{E}_c(U,V,\boldsymbol{\delta})$ and $A \equiv B \pmod{p}$, an algebraic manipulation determines at most one possible $Q$. Thus \eqref{EUVbd} implies that there are 
\begin{equation} \label{AequivB}
\ll  \frac{p^{1+2 \delta_1}}{M}
\end{equation}
such $Q$. 

The rest of the argument treats the case $A \not \equiv B \pmod{p}$. Recall the constraint 
\eqref{delta25}. Let 
 \begin{equation*}
\mathfrak{T}_c(U,V,\boldsymbol{\delta}):=\Big \{g+pr \in \mathbb{Z}:  |r| \leq \frac{36 N^2}{UV}+1, \quad  |g| \leq 16  p^{\delta_2+\delta_5} N \quad \text{and} \quad g \neq 0   \Big \}.
 \end{equation*}
 be a set containing a union of short arithmetic progressions. We will construct a map
 \begin{equation*}
 t_{\bullet}: Q \in \mathfrak{U}_{c}(U,V,\boldsymbol{\delta}) \quad  (A \not \equiv B \pmod{p}) \rightarrow t_Q \in \mathfrak{T}_c(U,V,\boldsymbol{\delta}),
 \end{equation*}
 whose fibers have size $O(p^{\varepsilon})$ for any fixed $\varepsilon>0$.
These facts will imply
\begin{equation} \label{countineq}
| \mathfrak{U}_c(U,V,\boldsymbol{\delta})| \ll  p^{\varepsilon} |\mathfrak{T}_c(U,V,\boldsymbol{\delta})|+\frac{p^{1+2 \delta_1}}{M}.
\end{equation}

Starting with $Q \in \mathfrak{U}_{c}(U,V,\boldsymbol{\delta})$, subtracting the congruences in \eqref{syscong1} yields 
\begin{equation} \label{subtract}
\overline{\ell}^2 (2n d_{\ell}+d_{\ell}^2) \equiv B-A \not \equiv 0 \pmod{p}.
\end{equation}
Recall that for each $\ell \in \mathcal{H}_{3}(c,\boldsymbol{\delta})$, we fixed a choice $h_{\ell} \in [1,p^{\delta_5}]$ such that $h_{\ell} \alpha_{\ell}=h_{\ell} \overline{\ell}^2/p$ has no convergent 
\begin{equation*} 
\frac{a}{b} \quad \text{with} \quad b \in [p^{\delta_2},p^{\delta_3}],
\end{equation*}
and has a convergent
\begin{equation*}
\frac{a^{*}_{\ell}}{b^{*}_{\ell}} \quad \text{such that} \quad  b^{*}_{\ell} \in (p^{\delta_3},p^{\delta_4}],
\end{equation*}
with $b^*_{\ell}$ minimal. Moreover $\ell \in \mathcal{E}_{c}(U,V,\boldsymbol{\delta})$ restricts $b^*_{\ell} \in [U,2U]$. Let $a_{\ell}/b_{\ell}$ denote the convergent to $h_{\ell} \alpha_{\ell}$ with $b_{\ell} \in [1,p^{\delta_2})$ maximal.  Thus $a_{\ell}/b_{\ell}$ and $a^*_{\ell}/b^*_{\ell}$ are consecutive convergents. 
Let $\mu_{\ell} \in \Z \cap (-p/2,p/2]$ be such that
\begin{equation*}
\mu_{\ell} \equiv b_{\ell} h_{\ell} \overline{\ell}^2 \pmod{p}.
\end{equation*}
Note that $\mu_{\ell} \not \equiv 0 \pmod{p}$. By a similar argument to the one contained in Section \ref{H2} we have 
\begin{equation*}
|\mu_{\ell}|  \leq \frac{p}{U}.
\end{equation*}
Multiplying \eqref{subtract} by $b_{\ell} h_{\ell}$ we obtain 
\begin{equation} \label{multcong}
\mu_{\ell} (2nd_{\ell}+d_{\ell}^2) \equiv b_{\ell} h_{\ell} (B-A) \pmod{p}.
\end{equation}
Writing \eqref{multcong} as an equation of integers we have 
\begin{equation*}
\mu_{\ell} (2nd_{\ell}+d_{\ell}^2)=pr + b_{\ell} h_{\ell} (B-A) \quad \text{for some} \quad r \in \mathbb{Z}.
\end{equation*}
Observe that 
\begin{equation*}
0<|b_{\ell} h_{\ell} (B-A)| \leq 16 p^{\delta_2+\delta_5} N \quad \text{and} \quad \big |\mu_{\ell} (2nd_{\ell}+d_{\ell}^2) \big| \leq \frac{36N^2 p}{UV}+1
\end{equation*}
Thus
\begin{equation*}
t_Q:=\mu_{\ell} (2nd_{\ell}+d_{\ell}^2) \in \mathfrak{T}_c(U,V,\boldsymbol{\delta}).
\end{equation*}

Suppose we are given $t=pr+g \in  \mathfrak{T}_c(U,V,\boldsymbol{\delta})$. Since $t \neq 0$ (by \eqref{delta25}), $g \neq 0$ and $A \not \equiv B \pmod{p}$, the number of tuples
\begin{equation} \label{bigtuple}
(\mu,n,d,b,h,B-A) \in \mathbb{Z} \times [0,N] \times  \Big [1, \frac{2N}{V} \Big] \times [1,p^{\delta_2}] \times [1,p^{\delta_5}] \times [-16N,16N]
\end{equation}
satisfying 
\begin{equation*}
t=\mu d (2n+d) \quad \text{and} \quad g=bh(B-A)
\end{equation*}
is at most $O(p^{\varepsilon})$ by divisor considerations. A tuple in \eqref{bigtuple} then determines two values of $\ell$ mod $p$ using
\begin{equation*}
\mu \equiv b h \overline{\ell}^2 \pmod{p}.
\end{equation*}
Thus there are at most $O(p^{\varepsilon})$ valid $(n,d,\ell)$ for a given $t$. Each $3$-tuple together with the equations defining $\mathfrak{C}_{\ell}$ in \eqref{syscong1} determine at most one pair $(A,B) \in [-8N,8N]^2$. Thus there are at most $O(p^{\varepsilon})$ quadruples $(\ell;n,A,B) \in \mathfrak{U}_c(U,V,\boldsymbol{\delta})$ such that $t_Q=t$ and \eqref{countineq} holds.

Combining \eqref{intuit} and \eqref{countineq} gives 
\begin{equation*}
 \max_{p^{\delta_3} \leq U \leq p^{\delta_4}} \max_{N p^{-\delta_6} \leq V \leq N} V \cdot |\mathcal{E}_c(U,V,\boldsymbol{\delta})|  \ll (Np)^{\varepsilon} \Big( \frac{p^{1+2 \delta_1+\delta_6}}{M}+N^2 p^{\delta_2+\delta_5+2 \delta_6-\delta_3}+Np^{\delta_2+\delta_5+\delta_6} \Big).
\end{equation*}
Inserting this into \eqref{refined} we obtain 
\begin{equation} \label{S3bd}
\sum_{\ell \in \mathcal{H}_3(c,\boldsymbol{\delta})} |A_{\ell,c}| |\mathbb{S}_{\ell} | \ll (Np)^{\varepsilon}(p^{1+2 \delta_1-\delta_6} N+p^{1+2 \delta_1+\delta_6}+N^2 M p^{\delta_2+\delta_5+2 \delta_6-\delta_3}+MN p^{\delta_2+\delta_5+\delta_6}). 
\end{equation}

Inserting \eqref{H1bd}, \eqref{S2bd} and \eqref{S3bd} into \eqref{Sbd1} we obtain
\begin{align} 
R_1 & \ll  
(Np)^{\varepsilon} \big(MN+MN^2 p^{-\delta_1}+  p^{2 \delta_1} N^2+N p^{1+2 \delta_1-\delta_5}+N p^{1+2 \delta_1-\frac{\delta_2}{2}} \nonumber \\
&+N^{\frac{1}{2}} p^{1+2 \delta_1} +p^{1+2 \delta_1+\frac{\delta_3}{2}}  + MNp^{1-\delta_4+\delta_2+\delta_5}+Np^{1+2 \delta_1-\delta_6} \nonumber  \\ 
&+p^{1+2 \delta_1+\delta_6}+MN^2  p^{\delta_2+\delta_5+2 \delta_6-\delta_3}+MN p^{\delta_2+\delta_5+\delta_6} \big).  \label{combined}
\end{align}

The same argument above can be applied to bound $R_{-1}$ by the right hand side of \eqref{combined}.  One fixes a non-zero non-quadratic residue $j$ modulo $p$ and sees that \eqref{exampcase} is equivalent to 
\begin{equation*}
\Big(\frac{\overline{j} n_1}{p} \Big)=\Big(\frac{\overline{j} n_2}{p} \Big)=\Big(\frac{jcm}{p} \Big)=1.
\end{equation*}
Thus the analogue of \eqref{Rtri} is
\begin{align*}
R_{-1}&= \frac{1}{2} \sum_{N \leq n_1, n_2 \leq 2N}   \Big | \sum_{M \leq m \leq 2M} \hspace{0.15cm} \sum_{\substack{t \hspace{-0.2cm} \pmod{p} \\ t^2 \equiv c j m}} \hspace{0.15cm} \sum_{\substack{u,v \hspace{-0.2cm} \pmod{p} \\ u^2 \equiv \overline{j} n_1 \hspace{-0.2cm} \pmod{p} \\ v^2 \equiv  \overline{j} n_2 \hspace{-0.2cm} \pmod{p}}} e \Big( \frac{2 t(u+v)}{p}  \Big)  \Big | \\
& \leq  \frac{1}{2} \sum_{\ell \hspace{-0.2cm} \pmod{p}}  | A_{\ell,c j} | |\mathbb{S}_{\ell,j}|,
\end{align*}
where 
\begin{equation*}
\mathbb{S}_{\ell,j}:= \Big \{(u,v)  \in (\mathbb{F}_p^{\times})^2:  (j u^2, j v^2) \pmod{p} \in   [N,2N] \times [N,2N]  \quad \text{ and } \quad u + v \equiv \ell \hspace{-0.2cm} \pmod{p} \Big \}.
\end{equation*}
Repeating the algebraic manipulation with the linear congruence in the definition of $\mathbb{S}_{\ell,j}$ we obtain
\begin{equation*}
\| \alpha_{j,\ell} (u^2-v^2)^2 +\beta_{j,\ell} \| \leq \frac{8N}{p},
\end{equation*}
where 
\begin{equation*}
\alpha_{\ell,j}:=\frac{j \overline{\ell}^2}{p} \in \mathbb{Q} / \mathbb{Z} \quad \text{and} \quad \beta_{\ell,j}:=\frac{j \ell^2}{p} \in \mathbb{Q} / \mathbb{Z}.
\end{equation*}
Then \eqref{Sbd1} becomes 
\begin{equation} \label{Sbd12}
R_{-1} \ll MN+MN^2 p^{-\delta_1}+\sum_{\ell \in \mathcal{L}(c j,\boldsymbol{\delta})} |A_{\ell,cj}| |\mathbb{S}_{\ell,j}|
\end{equation}
and we consider the partition 
\begin{equation*}
\mathcal{L}(c j,\boldsymbol{\delta})=\mathcal{H}_1(c j,\boldsymbol{\delta}) \cup \mathcal{H}_2(c j,\boldsymbol{\delta}) \cup \mathcal{H}_3(c j,\boldsymbol{\delta}),
\end{equation*}
where one replaces $\alpha_{\ell}$ (resp. $\beta_{\ell}$) by $\alpha_{\ell,j}$ (resp. $\beta_{\ell,j}$) in the definitions of the $\mathcal{H}_i(c,\boldsymbol{\delta})$ occurring in \eqref{H1def}--\eqref{H3def}.  One can then repeat the arguments in Sections \ref{H1}--\ref{H3} making the necessary modifications.

For $p^{\frac{1}{2}-\frac{1}{10}} \leq N \leq p^{\frac{1}{2}+\frac{1}{10}}$, we see that 
\begin{equation*}
\boldsymbol{\delta}:=\Big(\frac{11}{288}, \frac{11}{48}, \frac{25}{36},\frac{407}{432},\frac{11}{96},\frac{11}{96} \Big)
\end{equation*}
satisfies \eqref{delta234}--\eqref{delta25}, and is sufficient to obtain Theorem \ref{critrangethm} (after multiplication by $p$, cf. \eqref{critset}). For aesthetic reasons we take a larger estimate (i.e. all denominators multiples of $27$).
\begin{remark} \label{mod4}
Observe that the above argument can be modified so that the estimate in Theorem \ref{critrangethm} holds when each $n_1,n_2$ and $m$ are restricted to fixed congruence classes modulo $4$.
\end{remark}

\end{proof}

\bibliographystyle{amsalpha}
\bibliography{twistedREV}

\end{document}